\renewcommand{\leq}{\leqslant}
\newcommand{\Z}{\mathbb{Z}}
\newcommand{\R}{\mathbb{R}}
\newcommand{\C}{\mathbb{C}}
\newcommand{\Tr}{\mathop{\mathrm{Tr}}}
\newtheorem{thm}{Theorem}
\newtheorem{lem}{Lemma}
\newtheorem{prop}{Proposition}
\newtheorem{cor}{Corollary}
\newtheorem{defi}{Definition}
\newtheorem{rem}{Remark}
\title{Co-Euler structures on bordisms}
\author{Osmar MALDONADO MOLINA}
  \subjclass[2000]{57R20, 58J52}
  \keywords{co-Euler Structures, analytic torsion, bordisms, Poincar\'e duality characteristic classes, Chern--Simons forms}
  \address{Department of Mathematics
University of Vienna,
Oskar-Morgenstern-Platz 1,
 Room 03.131
 A-1090 Vienna}
 \email{osmar.maldonado@univie.ac.at}
\begin{document}
\selectlanguage{english}
\thispagestyle{empty}

\begin{abstract}
Co-Euler structures were studied by Burghelea and Haller on closed manifolds as dual objects to Euler structures.
We extend the notion of co-Euler structures to the situation of compact manifolds with boundary.
As an application, by studying their variation with respect to smooth changes of the Riemannian metric,
co-Euler structures conveniently provide correction terms that can be taken into account when considering the complex-valued analytic torsion on bordisms as a Riemannian invariant.
\end{abstract}

\maketitle

\section{Introduction}
In this paper,
$M$ is considered to be a compact connected non-necessarily oriented $m$-dimensional manifold with Riemannian metric $g$, and boundary $\partial M$ that inherits its Riemannian metric from that of $M$. Moreover, we assume $\partial M$ to be
the disjoint union of two closed (non-necessarily connected) submanifolds $\partial_{ +}M$ and $\partial_{-}M$.
We write
\begin{equation}
\label{bordism}
\mathbb{M}:=(M,\partial_{+}M,\partial_{-}M)\quad\text{and}\quad\mathbb{M}^{\prime}:=(M,\partial_{-}M,\partial_{+}M)
\end{equation}
to indicate that $M$ is considered as a bordism from $\partial_{+}M$ to $\partial_{-}M$, and $\mathbb{M}^{\prime}$ for
its \emph{dual} bordism, i.e., $M$ seen as the bordism from $\partial_{-}M$ to $\partial_{+}M$.

The concept of Euler structures was first introduced by Turaev in \cite{Turaev90}, see also \cite{Farber-Turaev}, for manifolds $M$ with vanishing Euler--Poincar\'e
characteristics $\chi(M)$ to conveniently remove ambiguities in the definition of the Reidemeister torsion. The set of Euler structures $\mathbf{\mathfrak{Eul}}(M;\C)$
is an affine space over the homology group $H_1(M;\C)$ in the sense that $H_1(M;\C)$ acts
freely and transitively on $\mathbf{\mathfrak{Eul}}(M;\C)$. Then, Euler structures were studied
on manifolds with arbitrary Euler characteristics at the expense of introducing a base point $x_{0}\in M$, see \cite{Burghelea_Removing metric anomalies from Ray--Singer torsion}.

Co-Euler structures can be considered as dual objects to Euler structures and were introduced by Burghelea and Haller in \cite{Burghelea-Haller06a} and \cite{Burghelea-Haller06b} and then used
in \cite{Burghelea-Haller} to study (variational formulas of) the complex-valued analytic torsion given on closed manifolds.
To have the ideas set up, let us recall in the situation of closed manifolds what Co-Euler structures are. Assume that $M$ is closed, connected and that its Euler characteristics $\chi(M)=0$. If $\Theta_{M}$ indicates the orientation bundle of $M$ and $\Theta_{M}^{\C}$ its complexification, the set of co-Euler
structures $ \mathbf{\mathfrak{Eul}}^{*}(M;\C)$ is an affine version of cohomology groups $H^{m-1}(M;\Theta^{\C}_M)$.
A co-Euler structure is an equivalence class of pairs $(g,\alpha)$, where $g$
is a Riemannian metric on $M$, and $\alpha\in\Omega^{m-1}(M;\Theta_M^{\C})$ is a $(m-1)$-smooth differential form  over $M$
with $d\alpha=\mathbf{e}(g)$ where $\mathbf{e}(g)\in\Omega^{m}(M;\Theta_M^{\C})$ is the Euler form of $g$. Two
such pairs $(g_1,\alpha_1)$ and $(g_2,\alpha_2)$ are equivalent if and only if
$\alpha_2-\alpha_1=\widetilde{\mathbf{e}}(g_1,g_2)$ where
$\widetilde{\mathbf{e}}(g_1,g_2)\in\Omega^{m-1}(M;\Theta_M^{\C})/d\Omega^{m-2}(M;\Theta^{\C}_M)$
denotes the Chern--Simons form. 
By construction co-Euler structures were well
suited to remove the metric ambiguities of the analytic torsion on closed manifolds and finally
provide a topological invariant, referred as the \textit{modified} Ray--Singer torsion, see \cite{Burghelea-Haller06a}.

In Section \ref{section_Co_Euler_structures}, we define co-Euler structures on a bordism $\mathbb{M}$.
As in case for a closed manifold, we start with the case where the relative Euler characteristics $\chi(M,\partial_{+}M)$ (or equivalently $\chi(M,\partial_{-}M)$) vanishes. In this situation
the space $\mathbf{\mathfrak{Eul}}_{}^{*}(\mathbb{M};\C)$ of co-Euler structures on $\mathbb{M}$
can be seen as an affine space over the relative cohomology group
$H^{m-1}(M,\partial M;\C)$ and depends on the choice of a base point $x_0\in M$ if the Euler characteristics $\chi(M;\partial_{-}M)\neq 0$.

A co-Euler structure on $\mathbb{M}$ is an equivalence class represented by
couples $(\underline{\alpha},g)$, where  $\underline{\alpha}=(\alpha,\alpha_{\partial})$ is a \textit{relative form} in the relative cochain complex $\Omega^{m-1}(M,\partial M;\Theta_{M}^{\C})$, i.e.,
a pair of differential forms $\alpha\in\Omega^{m-1}(M;\Theta_{M}^{\C})$ and $\alpha_{\partial}\in\Omega^{m-2}(\partial M;\Theta_{\partial M}^{\C})$
with $\mathbf{d}\underline{\alpha}=\underline{\mathbf{e}}(g)$ where $\mathbf{d}$ is an appropriate differential on the relative complex,
and $\underline{\mathbf{e}}(\mathbb{M},g)\in\Omega^{m}(M,\partial M;\Theta_M^{\C})$ is a relative Euler form associated to $(\mathbb{M},g)$. Two
such pairs $(g_1,\underline{\alpha}_1)$ and $(g_2,\underline{\alpha}_2)$ are equivalent if and only if
$\underline{\alpha}_2-\underline{\alpha}_1=\underline{\widetilde{\textbf{e}}}(\mathbb{M},g_1,g_2)$ where
$\underline{\widetilde{\textbf{e}}}(\mathbb{M},g_1,g_2)\in\Omega^{m-1}(M,\partial M;\Theta_M^{\C})$ modulo exact relative forms $\mathbf{d}\Omega^{m-2}(M;\partial M;\Theta^{\C}_M)$.

The relative Euler form and the relative Chern--Simons' forms on $M$ (and on $\partial M$) that we use are based on those worked out by Br\"uning and Ma in \cite{Bruening-Ma}, which appear in the anomaly formulas for the Ray--Singer metric, see \cite[Theorem 0.1]{Bruening-Ma}
and  \cite[Theorem 3.4]{Bruening-Ma2}, and also in the anomaly formulas for the complex-valued
Ray--Singer torsion, see \cite[Theorem 2]{Maldonado}. For the reader's convenience, we explain in the Appendix how these characteristic forms are constructed.

Moreover, we explain how co-Euler structures
on $\mathbb{M}$ are in a one-to-one correspondence with a co-Euler structure
on its dual bordism ${\mathbb{M}^{\prime}}$, by means of a so-called \textit{flip} map $\nu^{*}$, compatible with Poincar\'e
duality and affine over involution in relative cohomology.

In Proposition \ref{Lemma_variation_co-Euler_structures_on_man_with_boundary_without_base_point}, we derive the
infinitesimal variation of representatives of co-Euler structures with respect to smooth changes in the Riemannian metric, which then is used
in Section \ref{Section_Burghelea--Haller analytic torsion on bordisms} to encode
the variation of the complex-valued Ray--Singer torsion.

Then, more generally, we treat the case $\chi(M,\partial_{\pm}M)\not=0$, by considering a base point $x_{0}\in M$ and we define the space of base-pointed co-Euler structures
denoted by  $\mathbf{\mathfrak{Eul}}_{x_{0}}^{*}(\mathbb{M};\C)$.
We obtain
Proposition \ref{Proposition_regularization_function_S_both_boundaries} where we study their infinitesimal variation,
by using a regularization procedure for relative forms having
a \textit{singularity} in the interior of $M$ only.

In Section \ref{Section_Euler_structure_and_Co-Euler_Structures}, we recall 
the space of Euler structures on manifolds with boundary defined by Turaev in \cite{Turaev90}. We use a relative Mathai--Quillen form, 
to study Poincar\'e--Lefschetz duality in terms of a canonic isomorphism relating Euler and co-Euler structures in this setting. The relative
Mathai--Quillen form as presented here can be used to compare the complex-valued analytic torsion and the Milnor torsion without the need of (co)-Euler structures.

In Section \ref{Section_Burghelea--Haller analytic torsion on bordisms}, we define
a modified version for the complex-valued Ray--Singer torsion on
compact bordisms,  by conveniently adding certain correction terms. These correction terms,
expressed in terms of co-Euler structures, are incorporated to cancel out the variation of the
complex-valued Ray--Singer torsion with respect to smooth variations of the Riemannian metric and bilinear structures, given
in \cite[Theorem 2]{Maldonado}. In analogy with the situation on closed manifolds,
the modified complex-valued analytic torsion depends on the flat connection, the homotopy class
of the bilinear form and the co-Euler structure only. Finally, by means of the flip map $\nu^{*}$, we show naturality of the the modified torsion with respect to Poincar\'e duality.


\section*{Acknowledgements}
I would like to thank Stefan Haller for his comments on this paper and in particular for pointing out the construction and the use of the relative Mathai--Quillen form in Section \ref{Section_Euler_structure_and_Co-Euler_Structures}. Also I thank the University of Vienna for its support.

\section{Generalities and some conventions}
 \label{section_background}
 Consider the bordism $\mathbb{M}$ in (\ref{bordism}) and denote by $i:\partial M\hookrightarrow M$ the canonical embedding.
Let $\Theta_{M}\rightarrow M$ be  the orientation bundle of $TM$, considered as
the real line bundle associated to the frame bundle of $TM$, via the homomorphism
$\mathsf{sign} \det : GL_{m}(\R)\rightarrow O(1) \hookrightarrow GL_1(\R)$.
Since the structure group $O(1)=\{-1,+1\}$ is discrete, $\Theta_{M}$
is endowed with a canonical flat connection and a canonical fiber-wise metric which is parallel.
 We denote by $\Theta_{M}^{\C}$ the complexification of
 $\Theta_{M}$. As usual, $\Omega^{q}(M)$ is the vector space of smooth differential $q$-forms on $M$ so that
 $\Omega(M):=\oplus_{q}^{m}\Omega^{q}(M)$ is the de-Rham cochain complex of differential forms with de-Rham differential $d$. Thus,
 $\Omega(M;\Theta_{M})$ is the de-Rham cochain complex of $\Theta_{M}$-valued differential forms with induced differential still denoted by $d$.
 Analogously, we denote by $\Theta_{\partial M}$ the orientation bundle of $T \partial M$ and, as real line bundles over $\partial M$, we identify  $\Theta_{M}|_{\partial M}:=i^{*}\Theta_{M}$
 with $\Theta_{\partial M}$ by using the \textit{outward normal first convention}.
 The corresponding Levi--Civit\`a connections on $T M$ and $T \partial M$ are denoted by $\nabla$ and by $\nabla^{\partial}$ respectively.
 Recall the Hodge $\star$-operator
 $
 \star_{q}:=\star_{g,q}:\Omega^{q}(M)\rightarrow\Omega^{m-q}(M;\Theta_{M}),
 $
 i.e., the linear isomorphism
 defined by
 $
 \alpha\wedge\star\alpha^{\prime}=\langle\alpha,\alpha^{\prime}\rangle_{g}\mathsf{vol}_{g}(M),
 $
 for $\alpha,\alpha^{\prime}\in\Omega^{q}(M)$ and $0 \leq q\leq m$, where $\mathsf{vol}_{g}(M)\in\Omega^{m}(M;\Theta_{M})$ is the volume form of $M$.

 Recall that the \textit{relative cohomology} group $H^{q}(M,\partial M;\Theta_{M}^{\C})$ in degree $q$, can be computed, see \cite{Bott-Tu},
 by means of the $\Z$-graded differential cochain complex
\begin{equation}
 \label{equation_relative_forms_without_base_point}
 \Omega(M,\partial M;\Theta_{M}^{\C}):=\oplus_{q=0}^{m}\Omega^{q}(M,\partial M;\Theta_{M}^{\C})\end{equation}
 where $$\Omega^{q}(M,\partial M;\Theta_{M}^{\C}):=
\Omega^{q}(M;\Theta_{M}^{\C})\oplus\Omega^{q-1}(\partial M;\Theta_{M}^{\C})$$
 with $\Omega^{-1}(\partial M;\Theta_{M}^{\C}):=0.$ The space (\ref{equation_relative_forms_without_base_point}) will be referred as the space of \textit{relative} differential forms.
 The differential map in (\ref{equation_relative_forms_without_base_point}) is defined by
 \begin{equation}
 \label{equation_differential_in_cone}
 \begin{array}{rrcl}
 \mathbf{d}:&\Omega^{q}(M,\partial M;\Theta_{M}^{\C})&\rightarrow&\Omega^{q+1}(M,\partial M;\Theta_{M}^{\C})\\
 &(\alpha,\alpha_{\partial})&\mapsto & (d \alpha,i^{*}\alpha-d^{\partial}\alpha_{\partial}),
 \end{array}
 \end{equation}
 where $i:\partial M\hookrightarrow M$ and $d^{\partial}$ is the de-Rham differential at the boundary.
 Note that $\Omega(M,\partial M;\Theta_{M}^{\C})$ can be considered as a $\Omega(M)$-module by setting
 $$
 (\alpha,\alpha_{\partial})\wedge w:=(\alpha\wedge w,\alpha_{\partial}\wedge i^{*} w),\quad\text{for }w\in\Omega(M).
 $$
 For simplicity, we denote relative forms by $$\underline{\alpha}:=(\alpha,\alpha_{\partial})\in\Omega(M,\partial M;\Theta_{M}^{\C}).$$ Then, we have the graded Leibinz formula
 \begin{equation}
 \label{equation_Leibnit_rule_relative_forms}
 \mathbf{d}(\underline{\alpha}\wedge w)=(\mathbf{d}\underline{\alpha})\wedge w+(-1)^{q}\underline{\alpha}\wedge d w,
 \end{equation}
 which holds for each $\underline{\alpha}\in\Omega^{q}(M,\partial M;\Theta_{M}^{\C})$ and $w\in\Omega(M)$.

 Furthermore,
 for $\underline{\alpha}\in\Omega^{q}(M,\partial M,\Theta_{M})$
 and $w\in\Omega^{m-q}(M)$, one has the pairing
 \begin{equation}
 \label{pairing}
 \int_{(M,\partial
 M)}\underline{\alpha}\wedge w:=\int_{(M,\partial
 M)}(\alpha,\alpha_{\partial})\wedge w:=
 \int_{M}\alpha \wedge w-\int_{\partial M}\alpha_{\partial}\wedge
 i^{*} w,
 \end{equation}
 which induces a non-degenerate pairing $\langle\cdot,\cdot\rangle$ in cohomology:
  \begin{equation}
 \label{pairing_in_cohomology}
\begin{array}{lrcl}
\langle\cdot,\cdot\rangle:& H^{*}(M,\partial
M,\Theta_{M}^{\C})\times H^{m-*}(M;\C)&\rightarrow&\C \\
&\langle[(\alpha,\alpha_{\partial})],[w]\rangle&\mapsto&\int_{(M,\partial M)}(\alpha,\alpha_{\partial})\wedge w.
\end{array}
 \end{equation}
 If in addition $M$ is connected, then non-degeneracy of $\langle\cdot,\cdot\rangle$ implies that
  \begin{equation}
 \label{equation_cohomology_top_degre_no_base_point}
 H^{m}\left(M,\partial M;\Theta_{M}^{\C}\right)\cong H^{0}(M;\C)^{\prime}\cong\C.
 \end{equation}

We will be also be interested in spaces with a base point. For $x_{0}\in M\backslash \partial M$ a base point in the interior of $M$, denote by
$
\dot{M}:=M\backslash\{x_{0}\}
$. Consider $$\Omega^{q}(\dot{M},\partial M;\Theta_{M}^{\C}):=\Omega^{q}(\dot{M};\Theta_{M}^{\C})\oplus\Omega^{q-1}(\partial M;\Theta_{M}^{\C})$$ so that
\begin{equation}
 \label{equation_relative_forms_with_base_point}
      \Omega(\dot{M},\partial M;\Theta_{M}^{\C}):=\oplus_{q=0}^{m}\Omega^{q}(\dot{M},\partial M;\Theta_{M}^{\C})
\end{equation}
endowed with the same differential map $\mathbf{d}$
as in (\ref{equation_differential_in_cone}), is also a $\Z$-graded complex.
In analogy with (\ref{equation_cohomology_top_degre_no_base_point}), if $M$ is connected, then it is not difficult to show, see \cite{Maldonado_thesis}, that
\begin{equation}
 \label{Lemma_top_degree_relative_cohomology_with_base_point_vanishes}
\begin{array}{rcl}
H^{m}(\dot{M},\partial M;\Theta_{M}^{\C})&\cong & 0 \\
H^{m-1}\left(M,\partial M;\Theta_{M}^{\C}\right)&\cong&
          H^{m-1}(\dot{M},\partial M;\Theta_{M}^{\C}).\\
\end{array}
\end{equation}

\section{Co-Euler structures}
\label{section_Co_Euler_structures}
  In order to construct co-Euler structures on a bordism $\mathbb{M}$,
  we first need to introduce certain characteristic forms and secondary characteristic forms on the manifold and on its boundary. These characteristic forms are essentially a \textit{modified} version of those already considered
  Br\"uning and Ma in \cite{Bruening-Ma} when studying the (variation of) Ray--Singer analytic torsion on manifolds with boundary. More precisely, the forms we need on $M$
  are the Euler form $\mathbf{e}(M,g)\in\Omega^{m}(M;\Theta_{M}^{\C})$ associated to the metric $g$,
  and secondary forms of Chern--Simons type
  $\mathbf{\widetilde{e}}(M,g,g^{\prime})\in\Omega^{m-1}(M;\Theta_{M}^{\C})$ associated to two (smoothly connected) Riemannian metrics
  $g$ and $g^{\prime}$. The characteristic form on $\partial M$ that we need is defined in \cite[expression (1.17), page 775]{Bruening-Ma} denoted by $\mathbf{e_{b}}(\partial M,g)$ 
  and the secondary (Chern--Simons) form is that in \cite[expression (1.45), page 780]{Bruening-Ma} and denoted $\mathbf{\widetilde{e}_{b}}(\partial M,g,g^{\prime})\in\Omega^{m-2}(\partial M;\Theta_{M}^{\C})$. The forms
  $\mathbf{e_{b}}(\partial M,g)$ and $\mathbf{\widetilde{e}_{b}}(\partial M,g,g^{\prime})$ were constructed by Br\"uning and Ma with respect to an inward pointing (unit vector) field along the whole boundary $\partial  M$. Here, we want to distinguish the roles of $\partial_{+}M$ and $\partial_{-}M$.
  We denote by $\varsigma_{\mathsf{in}}$ the unit \textit{inward pointing} normal vector field on the boundary, and by
 $\varsigma_{\mathsf{out}}:=-\varsigma_{\mathsf{in}}$ the unit \textit{outward pointing} normal vector field on the boundary.
  Then, we consider the following vector field
 \begin{equation}
 \label{definitions_out_in_unit_vector_fields_at_boundary_plus}
 \varsigma
				  :=\left\{\begin{array}{lr}\varsigma_{\mathsf{in}}&\text{on }
				  \in\partial_{+}M \\
				  \varsigma_{\mathsf{out}}&\text{on }\in\partial_{-}M\\
                        \end{array}
             \right.
 \end{equation}
 which is inward pointing along $\partial_{+}M$ and outward pointing along $\partial_{-}M$.
 Then, we use the vector field $\varsigma$ given in (\ref{definitions_out_in_unit_vector_fields_at_boundary_plus}) to specify a characteristic form $$\mathbf{e}_{\partial}(\partial_{+}M,\partial_{-}M,g)\in\Omega^{m-2}(\partial M;\Theta_{M}^{\C}),$$ a slightly modified version of $\mathbf{e_{b}}(\partial M,g)$, and
  a secondary characteristic form $$\left.\widetilde{\mathbf{e}}_{\partial}\right.\left(\partial_{+}M, \partial_{-}M,g,g^{\prime}\right)\in\Omega^{m-2}(\partial M,\Theta_{M}^{\C}),$$ a slightly modified version of $\mathbf{\widetilde{e}_{b}}(\partial M,g,g^{\prime})$, according to the vector field $\varsigma$. For further details, the reader is strongly referred at this point to the Appendix.

\begin{defi}
\label{definition_relative_Euler_form}
Let $\mathbb{M}$ be a Riemannian bordism.
Consider the forms from Definition \ref{definitions_of_modified_e_partial_B_forms_on_bordisms} in the Appendix. The \textbf{relative Euler form} is
$$
  \begin{array}{c}
  \underline{\mathbf{e}}(\mathbb{M},g)
 :=(\mathbf{e}(M,g),\mathbf{e}_{\partial}(\partial_{+}M,\partial_{-}M,g))
 \in\Omega^{m}(M,\partial M;\Theta_{M}^{\C}).
  \end{array}
$$
\end{defi}

The relative Euler form $\underline{\mathbf{e}}(\mathbb{M},g)$ is \textit{closed} in
$\Omega(M,\partial M;\Theta_{M}^{\C})$, because of dimensional reasons. From formula (\ref{equation_relations_chern_simons_relative_euler_forms_2}) in Lemma \ref{basic_properties_of_chern_simon_forms} below,
it follows that its cohomology class
\begin{equation}
 \label{equation_relations_chern_simons_relative_euler_forms_1}
\left[\underline{\mathbf{e}}(\mathbb{M})\right]:=\left[ \underline{\mathbf{e}}(\mathbb{M},g) \right]
\end{equation}
is independent of $g$.

\begin{defi}
\label{definition_secondary_relative_Euler_form}
The  \textbf{secondary relative Euler form} on $\mathbb{M}$ associated to the Riemannian metrics $g_{0}$ and $g_{\tau}$ is the relative form
$$\underline{\mathbf{\widetilde{e}}}(\mathbb{M},g_{0},g_{\tau})\in\Omega^{m-1}(M,\partial M;\Theta_{M}^{\C})$$ given by
\begin{equation}
\label{definition_Chern_Simons_relative_form_on_bordism}
\begin{array}{c}
\underline{\mathbf{\widetilde{e}}}(\mathbb{M},g_{0},g_{\tau})
:=\left(\widetilde{\mathbf{e}}\left(M,g_{0},g_{\tau}\right),
-\left.\widetilde{\mathbf{e}}_{\partial}\right.\left(\partial_{+}M,\partial_{-}M,g_{0},g_{\tau}\right)\right)
\end{array}
\end{equation}
where $\widetilde{\mathbf{e}}\left(M,g_{0},g_{\tau}\right)$ and
$\left.\widetilde{\mathbf{e}}_{\partial}\right.\left(\partial_{+}M,\partial_{-}M,
   g_{0},g_{\tau}\right)$ are the Chern--Simons forms given in Definition \ref{definition_of_Chern_Simons_secondary_classes_1} in the Appendix.
\end{defi}

\begin{lem}(Br\"uning--Ma)
\label{basic_properties_of_chern_simon_forms}
Let $\underline{\mathbf{\widetilde{e}}}(\mathbb{M},g_{0},g_{1})$ be the secondary relative Euler form in (\ref{definition_Chern_Simons_relative_form_on_bordism}) associated
to a couple of Riemannian metrics $g_{0}$, $g_{1}$ in $M$. If $\{g_{s}\}$ is a smooth path of Riemannian metrics connecting $g_{0}$ to $g_{1}$, then the formula
\begin{equation}
 \label{equation_relations_chern_simons_relative_euler_forms_2}
\mathbf{d}
\underline{\mathbf{\widetilde{e}}}(\mathbb{M},g_{0},g_{1})
=\underline{\mathbf{e}}(\mathbb{M},g_{1})-\underline{\mathbf{e}}(\mathbb{M},g_{1})
\end{equation}
holds. The secondary relative Euler form $\underline{\mathbf{\widetilde{e}}}(\mathbb{M},g_{0},g_{1})$ does depend on the path of metrics,
but only up to exact forms,
so that, it
defines a secondary
relative Euler class in the sense of Chern--Simons. Moreoever, up to exact forms in relative cohomology, the relations
\begin{equation}
 \label{equation_relations_chern_simons_relative_euler_forms_3}
 \begin{array}{l}
  \underline{\mathbf{\widetilde{e}}}(\mathbb{M},g_{0},g_{\tau})=-
  \underline{\mathbf{\widetilde{e}}}(\mathbb{M},g_{\tau},g_{0})\\
  \underline{\mathbf{\widetilde{e}}}(\mathbb{M},g_{0},g_{\tau})=
  \underline{\mathbf{\widetilde{e}}}(\mathbb{M},g_{0},g_{s}) +
  \underline{\mathbf{\widetilde{e}}}(\mathbb{M},g_{s},g_{\tau})
  \end{array}
\end{equation}
hold.
\end{lem}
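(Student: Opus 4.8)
The plan is to reduce every assertion to the corresponding statements for the interior Euler form on $M$ and for the boundary form on $\partial M$---those of Br\"uning--Ma \cite{Bruening-Ma} recalled in the Appendix---and then to assemble them in the relative complex (\ref{equation_relative_forms_without_base_point}). Applying the relative differential (\ref{equation_differential_in_cone}) to the definition (\ref{definition_Chern_Simons_relative_form_on_bordism}) of $\underline{\mathbf{\widetilde{e}}}(\mathbb{M},g_{0},g_{1})$ yields
\begin{equation*}
\mathbf{d}\,\underline{\mathbf{\widetilde{e}}}(\mathbb{M},g_{0},g_{1})
=\bigl(\,d\,\widetilde{\mathbf{e}}(M,g_{0},g_{1}),\;
i^{*}\widetilde{\mathbf{e}}(M,g_{0},g_{1})
+d^{\partial}\widetilde{\mathbf{e}}_{\partial}(\partial_{+}M,\partial_{-}M,g_{0},g_{1})\,\bigr),
\end{equation*}
the plus sign in the second slot arising from the explicit minus sign in front of $\widetilde{\mathbf{e}}_{\partial}$ in (\ref{definition_Chern_Simons_relative_form_on_bordism}) together with the $-d^{\partial}$ in (\ref{equation_differential_in_cone}). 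Comparing with Definition \ref{definition_relative_Euler_form}, the transgression formula (\ref{equation_relations_chern_simons_relative_euler_forms_2}) becomes the pair of identities $d\,\widetilde{\mathbf{e}}(M,g_{0},g_{1})=\mathbf{e}(M,g_{1})-\mathbf{e}(M,g_{0})$ on $M$ and
\begin{equation*}
i^{*}\widetilde{\mathbf{e}}(M,g_{0},g_{1})+d^{\partial}\widetilde{\mathbf{e}}_{\partial}(\partial_{+}M,\partial_{-}M,g_{0},g_{1})=\mathbf{e}_{\partial}(\partial_{+}M,\partial_{-}M,g_{1})-\mathbf{e}_{\partial}(\partial_{+}M,\partial_{-}M,g_{0})
\end{equation*}
on $\partial M$.

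The first identity is the Chern--Simons transgression of the Pfaffian (Euler) form in the Mathai--Quillen normalization, and I would simply invoke its construction in the Appendix (equivalently \cite{Bruening-Ma}). The second is the boundary transgression formula of Br\"uning--Ma for $\mathbf{e_{b}}$ and $\widetilde{\mathbf{e}_{b}}$ (see \cite[(1.17) and (1.45)]{Bruening-Ma}), carried over from their convention---auxiliary unit field inward along \emph{all} of $\partial M$---to the mixed field $\varsigma$ of (\ref{definitions_out_in_unit_vector_fields_at_boundary_plus}). On $\partial_{+}M$ nothing changes, whereas on $\partial_{-}M$, where $\varsigma=\varsigma_{\mathsf{out}}=-\varsigma_{\mathsf{in}}$, both $\mathbf{e}_{\partial}$ and $\widetilde{\mathbf{e}}_{\partial}$ acquire signs relative to $\mathbf{e_{b}}$ and $\widetilde{\mathbf{e}_{b}}$; I expect these to combine with the minus sign in (\ref{definition_Chern_Simons_relative_form_on_bordism}) and with the conventions in (\ref{equation_differential_in_cone}) so that the identity holds uniformly on $\partial M$. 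This orientation/sign bookkeeping at $\partial_{-}M$ is the one genuinely delicate point, and I would handle it by working on $\partial_{+}M$ and $\partial_{-}M$ separately and then reassembling.

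For the remaining assertions I would run the usual transgression arguments componentwise. If $\{g_{s}\}$ and $\{g'_{s}\}$ are two smooth paths from $g_{0}$ to $g_{1}$, interpolating them through a two-parameter family of metrics and applying Stokes in the parameter square shows that the two resulting interior Chern--Simons forms differ by an exact form and the two boundary forms by a $d^{\partial}$-exact form, namely by the secondary transgression of the family; packaging this in the relative complex gives that the two corresponding choices of $\underline{\mathbf{\widetilde{e}}}(\mathbb{M},g_{0},g_{1})$ differ by an element of $\mathbf{d}\,\Omega^{m-2}(M,\partial M;\Theta_{M}^{\C})$, so that the secondary relative Euler \emph{class} is well defined; alternatively one may quote the interior and boundary versions of this statement directly from \cite{Bruening-Ma}. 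Finally, the relations (\ref{equation_relations_chern_simons_relative_euler_forms_3}) already hold at the level of compatible choices of paths---reversing the parameter $s\mapsto 1-s$ gives the antisymmetry, and concatenating a path from $g_{0}$ to $g_{s}$ with one from $g_{s}$ to $g_{\tau}$ gives the additivity---and hence, since a change of path alters $\underline{\mathbf{\widetilde{e}}}$ only by exact relative forms, they hold in particular modulo $\mathbf{d}\,\Omega^{m-2}(M,\partial M;\Theta_{M}^{\C})$.
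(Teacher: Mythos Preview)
Your proposal is correct and follows essentially the same approach as the paper: both reduce the assertions to the corresponding results of Br\"uning--Ma for the interior and boundary forms. The paper's proof is considerably terser---it simply observes that, since $\partial_{+}M$ and $\partial_{-}M$ are disjoint closed submanifolds, the statements are exactly \cite[Theorem 1.9]{Bruening-Ma}, and that the relations (\ref{equation_relations_chern_simons_relative_euler_forms_3}) follow directly from Definition \ref{definition_of_Chern_Simons_secondary_classes_1}---whereas you spell out the componentwise reduction and the sign bookkeeping on $\partial_{-}M$ explicitly.
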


\begin{proof}
 Since $\partial_{+}M$ and $\partial_{+}M$ are disjoint closed submanifolds, the statements above are exactly
 \cite[Theorem 1.9]{Bruening-Ma}.
 The identities in (\ref{equation_relations_chern_simons_relative_euler_forms_3}) follow  from the definition of
 $\underline{\mathbf{\widetilde{e}}}(\mathbb{M},g_{0},g_{\tau})$ in Definition
 \ref{definition_of_Chern_Simons_secondary_classes_1} in the Appendix.
\end{proof}

\subsection{Co-Euler structures without base point}
\label{Section_co-Euler Structures without base point}
We extend the notion of co-Euler structures in \cite{Burghelea-Haller} to the case of  bordisms $\mathbb{M}$.

\begin{lem}
\label{definition_equivalence_relation_without_base_point}
Recall Definitions \ref{definition_relative_Euler_form}, \ref{definition_of_Chern_Simons_secondary_classes_1}
  together with the
pairing $\langle\cdot,\cdot\rangle$ from (\ref{pairing}). Assume $M$ is connected. Let
$\underline{\mathbf{e}}(\mathbb{M},g)$ be the relative form given in Definition \ref{definition_relative_Euler_form}.
We assume first that the relative Euler Characteristics $\chi(M,\partial_{+}M)=0$.
Then the set
 \begin{equation}
\label{definition_Set_E_bordism}
\mathbf{E}^{*}(\mathbb{M};\C):=\left\{(g,\underline{\alpha})\left|
\begin{array}{rcl}
\underline{\alpha}&\in&\Omega^{m-1}(M,\partial M;\Theta_{M}^{\C})\\
\mathbf{d}\underline{\alpha}&=&\underline{\mathbf{e}}(\mathbb{M},g)\\
\end{array}
\right.
\right\}
\end{equation}
is not empty, so that we can define a relation in the space (\ref{definition_Set_E_bordism}) 
to say that
$(g,\underline{\alpha})\sim^{cs} (g^{\prime},\underline{\alpha}^{\prime})$
if and only if
 $$
 \underline{\alpha}^{\prime}-\underline{\alpha}=
 \underline{\mathbf{\widetilde{e}}}(\mathbb{M},g,g^{\prime})\in
\Omega^{m-1}(M,\partial M;\Theta_{M}^{\C})
 \slash\mathbf{d}\Omega^{m-2}(M,\partial M;\Theta_{M}^{\C}),$$
where
$\underline{\mathbf{\widetilde{e}}}(\mathbb{M},g,g^{\prime})$ is the secondary form
defined in (\ref{definition_Chern_Simons_relative_form_on_bordism}).  The relation $\sim^{cs}$
is an equivalence relation on $\mathbf{E}^{*}(\mathbb{M};\C)$.
\end{lem}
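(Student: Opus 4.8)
The plan is to establish non-emptiness of $\mathbf{E}^{*}(\mathbb{M};\C)$ first, and then to verify reflexivity, symmetry and transitivity of $\sim^{cs}$, the latter three being direct consequences of Lemma \ref{basic_properties_of_chern_simon_forms}.

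For non-emptiness, I would fix any Riemannian metric $g$ on $M$. The relative Euler form $\underline{\mathbf{e}}(\mathbb{M},g)\in\Omega^{m}(M,\partial M;\Theta_{M}^{\C})$ is closed in $\Omega(M,\partial M;\Theta_{M}^{\C})$ for dimensional reasons, hence defines a class in $H^{m}(M,\partial M;\Theta_{M}^{\C})$. By (\ref{equation_cohomology_top_degre_no_base_point}) this group is one-dimensional, and by non-degeneracy of the pairing (\ref{pairing_in_cohomology}) such a class is detected by pairing against the constant function $1\in H^{0}(M;\C)$, namely by $\int_{(M,\partial M)}\underline{\mathbf{e}}(\mathbb{M},g)=\int_{M}\mathbf{e}(M,g)-\int_{\partial M}\mathbf{e}_{\partial}(\partial_{+}M,\partial_{-}M,g)$. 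I would then invoke the (relative) Gauss--Bonnet--Chern theorem for the modified Br\"uning--Ma forms (see the Appendix, resp.\ \cite{Bruening-Ma}) to identify this integral with $\chi(M,\partial_{+}M)$. Since $\chi(M,\partial_{+}M)=0$ by hypothesis, the class $[\underline{\mathbf{e}}(\mathbb{M},g)]$ vanishes, so $\underline{\mathbf{e}}(\mathbb{M},g)=\mathbf{d}\underline{\alpha}$ for some $\underline{\alpha}\in\Omega^{m-1}(M,\partial M;\Theta_{M}^{\C})$, and $(g,\underline{\alpha})\in\mathbf{E}^{*}(\mathbb{M};\C)$.

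For the equivalence axioms, I would first note that the relation is meaningful: given $(g,\underline{\alpha}),(g^{\prime},\underline{\alpha}^{\prime})\in\mathbf{E}^{*}(\mathbb{M};\C)$, formula (\ref{equation_relations_chern_simons_relative_euler_forms_2}) gives $\mathbf{d}(\underline{\alpha}^{\prime}-\underline{\alpha})=\underline{\mathbf{e}}(\mathbb{M},g^{\prime})-\underline{\mathbf{e}}(\mathbb{M},g)=\mathbf{d}\,\underline{\mathbf{\widetilde{e}}}(\mathbb{M},g,g^{\prime})$, and since $\underline{\mathbf{\widetilde{e}}}(\mathbb{M},g,g^{\prime})$ is well defined modulo $\mathbf{d}\Omega^{m-2}(M,\partial M;\Theta_{M}^{\C})$ by Lemma \ref{basic_properties_of_chern_simon_forms}, both sides of the defining identity have a well-defined image in the quotient $\Omega^{m-1}(M,\partial M;\Theta_{M}^{\C})/\mathbf{d}\Omega^{m-2}(M,\partial M;\Theta_{M}^{\C})$. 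Reflexivity would then follow from $\underline{\mathbf{\widetilde{e}}}(\mathbb{M},g,g)=0$ modulo exact relative forms, obtained by putting $g_{0}=g_{s}=g_{\tau}=g$ in the additivity relation of (\ref{equation_relations_chern_simons_relative_euler_forms_3}). Symmetry would follow from the first relation of (\ref{equation_relations_chern_simons_relative_euler_forms_3}): if $\underline{\alpha}^{\prime}-\underline{\alpha}=\underline{\mathbf{\widetilde{e}}}(\mathbb{M},g,g^{\prime})$ in the quotient, then $\underline{\alpha}-\underline{\alpha}^{\prime}=-\underline{\mathbf{\widetilde{e}}}(\mathbb{M},g,g^{\prime})=\underline{\mathbf{\widetilde{e}}}(\mathbb{M},g^{\prime},g)$. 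Transitivity would follow by adding the two defining identities in the quotient and applying the additivity relation of (\ref{equation_relations_chern_simons_relative_euler_forms_3}), which yields $\underline{\alpha}^{\prime\prime}-\underline{\alpha}=\underline{\mathbf{\widetilde{e}}}(\mathbb{M},g,g^{\prime})+\underline{\mathbf{\widetilde{e}}}(\mathbb{M},g^{\prime},g^{\prime\prime})=\underline{\mathbf{\widetilde{e}}}(\mathbb{M},g,g^{\prime\prime})$.

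I expect the computational content to be routine; the only genuine input is the non-emptiness step, and within it the delicate point is the identification of $[\underline{\mathbf{e}}(\mathbb{M})]$ with $\chi(M,\partial_{+}M)$ under (\ref{equation_cohomology_top_degre_no_base_point}) --- that is, invoking the correct relative Gauss--Bonnet--Chern statement for the \emph{modified} Br\"uning--Ma forms, which is precisely where the choice of the vector field $\varsigma$ from (\ref{definitions_out_in_unit_vector_fields_at_boundary_plus}) enters: it is what makes the boundary term single out $\chi(M,\partial_{+}M)$ rather than $\chi(M)$ or $\chi(M,\partial_{-}M)$.
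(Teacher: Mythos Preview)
Your proposal is correct and follows essentially the same route as the paper: non-emptiness via the pairing (\ref{pairing_in_cohomology}) together with the relative Gauss--Bonnet--Chern formula, and the three equivalence axioms via Lemma~\ref{basic_properties_of_chern_simon_forms}. One small correction to your closing remark: in the paper's conventions the integral $\int_{(M,\partial M)}\underline{\mathbf{e}}(\mathbb{M},g)$ computes $\chi(M,\partial_{-}M)$ rather than $\chi(M,\partial_{+}M)$ (see Lemma~\ref{Theorem_Chern_Gauss_Bonnet_Bordism}), though of course one vanishes iff the other does, so your argument is unaffected.
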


\begin{proof}
By Chern--Gauss--Bonnet formula, see first equality of Lemma \ref{Theorem_Chern_Gauss_Bonnet_Bordism} below, the relative Euler form
$\underline{\mathbf{e}}(\mathbb{M},g)$
from Definition \ref{definition_relative_Euler_form}
satisfies
$$
 \langle[\underline{\mathbf{e}}(\mathbb{M},g)],[1]\rangle=0.
$$
Since  $\langle\cdot,\cdot\rangle$
is non-degenerate, the relative form
$\underline{\mathbf{e}}(\mathbb{M},g)$ is exact in relative cohomology.
That is, there exists $\underline{\alpha}\in\Omega^{m-1}(M,\partial M;\Theta_{M}^{\C})$ such that
$
\mathbf{d}\underline{\alpha}=\underline{\mathbf{e}}(\mathbb{M},g).
$
Hence the space $\mathbf{E}^{*}(\mathbb{M};\C)$ is not empty.
 The relation $\sim^{cs}$ satisfies the reflexivity property, since $$\underline{\mathbf{\widetilde{e}}}(\mathbb{M},g,g)=0.$$
 Symmetry and transitivity of $\sim^{cs}$ are implied by Lemma \ref{basic_properties_of_chern_simon_forms}.
\end{proof}

\begin{defi}
 \label{definition_Co-EulerStructure_without_base_point}
 Let $\mathbf{E}^{*}(\mathbb{M};\C)$ be the space defined in (\ref{definition_Set_E_bordism}).
 The set of \textbf{co-Euler structures} on
 a bordism $\mathbb{M}$ \index{co-Euler structures on compact bordisms} is defined as the quotient
\begin{equation}
\label{definition_Set_of_coeuler_structures_bordism}
\mathfrak{Eul}^{*}(\mathbb{M};\C):=\mathbf{E}^{*}(\mathbb{M};\C)/\sim^{cs};
\end{equation}
the equivalence class of $(g,\underline{\alpha})$ will be denoted
by $[g,\underline{\alpha}]$.
\end{defi}

\begin{lem}
\label{lemma_action_is_well_defined_free_transitive_coeuleur_Structures_absolute}
Let $H^{m-1}(M,\partial M;\Theta_{M}^{\C})$ be the relative cohomology groups in degree $m-1$
with coefficients in $\Theta_{M}^{\C}$. For a closed relative form \index{closed relative form} $\underline{\beta}\in\Omega^{m-1}(M,\partial M;\Theta_{M}^{\C})$,
denote by $[\underline{\beta}]$ its corresponding class in relative cohomology.  Consider  $\Upsilon^{*}$, the action of $H^{m-1}(M,\partial M;\Theta_{M}^{\C})$
on the space of co-Euler structures $\mathfrak{Eul}^{*}(\mathbb{M};\C)$ from Definition \ref{definition_Co-EulerStructure_without_base_point}, given by \index{co-Euler structures, affine action}
\begin{equation}
\label{definition_action_of_relative_cohomology_group_on_Coeuler_str_Absolute and Relative}
\begin{array}{c}
\Upsilon^{*}:H^{m-1}(M,\partial M;\Theta_{M}^{\C})\times\mathfrak{Eul}^{*}(\mathbb{M};\C)
\rightarrow \mathfrak{Eul}^{*}(\mathbb{M};\C)\\
\left([\underline{\beta}],[g,\underline{\alpha}]\right)   \mapsto
[g,\underline{\alpha}-\underline{\beta}].
\end{array}
\end{equation}
Then, $\Upsilon^{*}$ is well defined, independent of each choice of representatives, free and
transitive on $\mathfrak{Eul}^{*}(\mathbb{M};\C)$.
\end{lem}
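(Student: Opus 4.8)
The plan is to verify the four claims in order: first that the formula in the statement descends to a well-defined map on the relevant equivalence classes, then that it defines a left action of $H^{m-1}(M,\partial M;\Theta_{M}^{\C})$, then that this action is free, and finally that it is transitive. All of this is formal manipulation built on three inputs: the condition $\mathbf{d}\underline{\alpha}=\underline{\mathbf{e}}(\mathbb{M},g)$ defining membership in $\mathbf{E}^{*}(\mathbb{M};\C)$, the transgression identity $\mathbf{d}\underline{\mathbf{\widetilde{e}}}(\mathbb{M},g,g')=\underline{\mathbf{e}}(\mathbb{M},g')-\underline{\mathbf{e}}(\mathbb{M},g)$ from Lemma \ref{basic_properties_of_chern_simon_forms}, and the normalization $\underline{\mathbf{\widetilde{e}}}(\mathbb{M},g,g)=0$ together with the fact that $\underline{\mathbf{\widetilde{e}}}$ is well defined only modulo $\mathbf{d}\Omega^{m-2}(M,\partial M;\Theta_{M}^{\C})$.

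For well-definedness I would argue as follows. If $\underline{\beta}$ is closed then $\mathbf{d}(\underline{\alpha}-\underline{\beta})=\mathbf{d}\underline{\alpha}=\underline{\mathbf{e}}(\mathbb{M},g)$, so $(g,\underline{\alpha}-\underline{\beta})$ again lies in $\mathbf{E}^{*}(\mathbb{M};\C)$ and the right-hand side of the defining formula makes sense. Independence of the representative of $[\underline{\beta}]$ follows because replacing $\underline{\beta}$ by $\underline{\beta}+\mathbf{d}\underline{\gamma}$ changes $\underline{\alpha}-\underline{\beta}$ by the exact relative form $-\mathbf{d}\underline{\gamma}$, and since $\underline{\mathbf{\widetilde{e}}}(\mathbb{M},g,g)=0$ two pairs with the same metric whose forms differ by an exact form are $\sim^{cs}$-equivalent; hence $[g,\underline{\alpha}-\underline{\beta}]$ is unchanged. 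Independence of the representative $(g,\underline{\alpha})$ of $[g,\underline{\alpha}]$ is just as direct: if $(g,\underline{\alpha})\sim^{cs}(g',\underline{\alpha}')$, then $(\underline{\alpha}'-\underline{\beta})-(\underline{\alpha}-\underline{\beta})=\underline{\alpha}'-\underline{\alpha}=\underline{\mathbf{\widetilde{e}}}(\mathbb{M},g,g')$ modulo $\mathbf{d}\Omega^{m-2}(M,\partial M;\Theta_{M}^{\C})$, so $(g,\underline{\alpha}-\underline{\beta})\sim^{cs}(g',\underline{\alpha}'-\underline{\beta})$. That $\Upsilon^{*}$ is a group action is then immediate: $\Upsilon^{*}([0],[g,\underline{\alpha}])=[g,\underline{\alpha}]$ and $\Upsilon^{*}([\underline{\beta}_{1}],\Upsilon^{*}([\underline{\beta}_{2}],[g,\underline{\alpha}]))=[g,\underline{\alpha}-\underline{\beta}_{2}-\underline{\beta}_{1}]=\Upsilon^{*}([\underline{\beta}_{1}]+[\underline{\beta}_{2}],[g,\underline{\alpha}])$.

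For freeness, suppose $\Upsilon^{*}([\underline{\beta}],[g,\underline{\alpha}])=[g,\underline{\alpha}]$, i.e. $(g,\underline{\alpha}-\underline{\beta})\sim^{cs}(g,\underline{\alpha})$; by the definition of $\sim^{cs}$ this means $\underline{\alpha}-(\underline{\alpha}-\underline{\beta})=\underline{\beta}$ equals $\underline{\mathbf{\widetilde{e}}}(\mathbb{M},g,g)=0$ in $\Omega^{m-1}(M,\partial M;\Theta_{M}^{\C})/\mathbf{d}\Omega^{m-2}(M,\partial M;\Theta_{M}^{\C})$, so $\underline{\beta}$ is exact and $[\underline{\beta}]=0$. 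For transitivity, given two co-Euler structures $[g_{1},\underline{\alpha}_{1}]$ and $[g_{2},\underline{\alpha}_{2}]$, I would set
\[
\underline{\beta}:=\underline{\alpha}_{1}-\underline{\alpha}_{2}+\underline{\mathbf{\widetilde{e}}}(\mathbb{M},g_{1},g_{2})\in\Omega^{m-1}(M,\partial M;\Theta_{M}^{\C}).
\]
Using $\mathbf{d}\underline{\alpha}_{j}=\underline{\mathbf{e}}(\mathbb{M},g_{j})$ and the transgression identity one computes $\mathbf{d}\underline{\beta}=\underline{\mathbf{e}}(\mathbb{M},g_{1})-\underline{\mathbf{e}}(\mathbb{M},g_{2})+\bigl(\underline{\mathbf{e}}(\mathbb{M},g_{2})-\underline{\mathbf{e}}(\mathbb{M},g_{1})\bigr)=0$, so $\underline{\beta}$ is closed and $[\underline{\beta}]\in H^{m-1}(M,\partial M;\Theta_{M}^{\C})$ is defined. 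Moreover $\underline{\alpha}_{1}-\underline{\beta}=\underline{\alpha}_{2}-\underline{\mathbf{\widetilde{e}}}(\mathbb{M},g_{1},g_{2})$ satisfies $\mathbf{d}(\underline{\alpha}_{1}-\underline{\beta})=\underline{\mathbf{e}}(\mathbb{M},g_{1})$ and $\underline{\alpha}_{2}-(\underline{\alpha}_{1}-\underline{\beta})=\underline{\mathbf{\widetilde{e}}}(\mathbb{M},g_{1},g_{2})$, whence $(g_{1},\underline{\alpha}_{1}-\underline{\beta})\sim^{cs}(g_{2},\underline{\alpha}_{2})$ and therefore $\Upsilon^{*}([\underline{\beta}],[g_{1},\underline{\alpha}_{1}])=[g_{2},\underline{\alpha}_{2}]$. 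The only step with genuine content is the verification that this $\underline{\beta}$ is closed, which is precisely where Lemma \ref{basic_properties_of_chern_simon_forms} enters; everything else is bookkeeping, and the main thing to be careful about is tracking the "modulo $\mathbf{d}\Omega^{m-2}$" ambiguity consistently, since $\underline{\mathbf{\widetilde{e}}}$ — and hence $\sim^{cs}$ — is only defined up to exact relative forms.
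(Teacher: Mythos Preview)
Your proof is correct and follows essentially the same approach as the paper's own proof: the same verification that $(g,\underline{\alpha}-\underline{\beta})\in\mathbf{E}^{*}(\mathbb{M};\C)$, the same arguments for independence of representatives, the same freeness argument via $\underline{\mathbf{\widetilde{e}}}(\mathbb{M},g,g)=0$, and the same candidate $\underline{\beta}=\underline{\alpha}_{1}-\underline{\alpha}_{2}+\underline{\mathbf{\widetilde{e}}}(\mathbb{M},g_{1},g_{2})$ for transitivity, checked to be closed using Lemma~\ref{basic_properties_of_chern_simon_forms}. Your explicit verification of the group-action axioms is a small addition the paper omits, but it is harmless and correct.
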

\begin{proof}
For $[\underline{\beta}]\in H^{m-1}(M,\partial M;\Theta_{M}^{\C})$,
a class in relative cohomology represented by
the closed relative form $\underline{\beta}\in\Omega^{m-1}(M,\partial M,\Theta_{M}^{\C})$, consider its action on
the co-Euler structure
$[g,\underline{\alpha}]$, represented by the couple $(g,\underline{\alpha})$. Remark that
$(g,\underline{\alpha}-\underline{\beta})\in\mathbf{E}^{*}(\mathbb{M};\C)$, because
$\mathbf{d}(\underline{\alpha}-\underline{\beta})=\mathbf{d}\underline{\alpha}-\mathbf{d}\underline{\beta}=
\mathbf{d}\underline{\alpha}= \underline{\mathbf{e}}(\mathbb{M},g).$
Let us prove that $\Upsilon^{*}$ does not depend on the choice of representatives. The map
$\Upsilon^{*}$ is independent of the choice of representative for the co-Euler class.
Indeed, let $(g^{\prime}, \underline{\alpha}^{\prime})$ represent
the same class as $(g,\underline{\alpha})$ in the quotient space $\mathfrak{Eul}^{*}(\mathbb{M};\C)$ for which we have
$\Upsilon^{*}_{\underline{\beta}}(g^{\prime},\underline{\alpha}^{\prime})
=(g^{\prime},(\underline{\alpha}^{\prime}-\underline{\beta}))$. Since
$
(\underline{\alpha}^{\prime}-\underline{\beta})-(\underline{\alpha}-\underline{\beta})=
(\underline{\alpha}^{\prime}-\underline{\alpha})
=\underline{\mathbf{\widetilde{e}}}(\mathbb{M},g,g^{\prime})
$
modulo relative exact forms, we have $\Upsilon^{*}_{\underline{\beta}}([g,\underline{\alpha}])=
\Upsilon^{*}_{\underline{\beta}}([g^{\prime},\underline{\alpha}^{\prime}]).$
The map $\Upsilon^{*}$ is also independent of the choice
of the representative for the class in cohomology $[\underline{\beta}]$.
Indeed,  different choices for the cohomology class of $\underline{\beta}$
are obtained by adding coboundaries in $\Omega^{m-1}(M,\partial_{+}M;\Theta_{M}^{\C})$, that is
$\underline{\beta}+\mathbf{d}\underline{\beta}^{\prime}$.
But for these forms we have
$
\Upsilon^{*}_{\underline{\beta}}([g,\underline{\alpha}])=
\Upsilon^{*}_{\underline{\beta}+\mathbf{d}\underline{\beta}^{\prime}}(
[g,\underline{\alpha}]),
$ since the equivalence relation $\sim^{cs}$ is given up to
relative exact forms only, see Lemma \ref{definition_equivalence_relation_without_base_point}. So, we have proved
$\Upsilon^{*}$ is well defined and independent of every choice of representatives.

The same argument is used to see that $H^{m-1}(M,\partial M;\Theta_{M}^{\C})$ acts freely on
$\mathfrak{Eul}^{*}(\mathbb{M};\C)$. Indeed, if $\underline{\beta}$ is such that
$[g,\underline{\alpha}-\underline{\beta}]=[g,\underline{\alpha}]$,
then
$
\underline{\beta}= \underline{\mathbf{\widetilde{e}}}(\mathbb{M},g,g)
+\mathbf{d}\underline{\beta}^{\prime},
$
but, since the first term on the right hand side in the equality above vanish,
the relative form $\underline{\beta}$ is necessarily exact.

We show this action is transitive on  $\mathfrak{Eul}^{*}(\mathbb{M};\C)$: for two classes
$[g,\underline{\alpha}]$ and $[g^{\prime},\underline{\alpha}^{\prime}]$,
we can choose the relative form
$
\underline{\beta}:=(\underline{\alpha}-\underline{\alpha}^{\prime})+
\underline{\mathbf{\widetilde{e}}}(\mathbb{M},g,g^{\prime}).
$
By Lemma \ref{basic_properties_of_chern_simon_forms}, the relative form $\underline{\beta}$ is closed, since
$
\mathbf{d}\underline{\beta}=
  \underline{\mathbf{e}}(\mathbb{M},g) -
  \underline{\mathbf{e}}(\mathbb{M},g^{\prime})+
  \mathbf{d}\underline{\mathbf{\widetilde{e}}}(\mathbb{M},g,g^{\prime})=0.
$
Finally, we have
$
\Upsilon^{*}_{[\underline{\beta}]}([g,\underline{\alpha}])=
[g,\underline{\alpha}-\underline{\beta}]=
[g^{\prime},\underline{\alpha}^{\prime}].
$
\end{proof}

\subsubsection{\textbf{The flip map for co-Euler Structures}}
\label{section_flip_map}
Let us consider the spaces of co-Euler structures $\mathfrak{Eul}^{*}(\mathbb{M};\C)$ and $\mathfrak{Eul}^{*}({\mathbb{M}^{\prime}};\C)$, from Definition
\ref{definition_Co-EulerStructure_without_base_point}, corresponding to
the mutually dual bordisms $\mathbb{M}$ and ${\mathbb{M}^{\prime}}$ respectively.
In view of Lemma \ref{Lemma_duality_e_partial_on_bordisms} in the Appendix,
there is a natural map
\begin{equation}
 \label{equation_flip_map_co-Euler_structures}
 \begin{array}{lrcl}
 \nu^{*}: &\mathfrak{Eul}^{*}(\mathbb{M};\C)
 &\rightarrow&\mathfrak{Eul}^{*}({\mathbb{M}^{\prime}};\C)\\
 &[g,\underline{\alpha}]&\mapsto&\left[g,\left(-1)^{m}\underline{\alpha}\right)\right]\\
 \end{array}
\end{equation}
which is affine over the involution in relative cohomology
$$(-1)^{m}\cdot\mathsf{id}:H^{m-1}(M,\partial M;\Theta_{M}^{\C})\rightarrow H^{m-1}(M,\partial M;\Theta_{M}^{\C}).$$

\begin{rem}
If $M$ is a closed manifold, i.e., $\partial_{+}M=\emptyset=\partial_{-}M$, then clearly
$\mathfrak{Eul}^{*}(\mathbb{M};\C)=\mathfrak{Eul}^{*}(\mathbb{M}^{\prime};\C)$, are
affine over $H^{m-1}(M;\Theta_{M}^{\C})$ and coincide with $\mathfrak{Eul}^{*}(M;\C)$,
the set of co-Euler
structures on a manifold without boundary  (see \cite{Burghelea-Haller} and \cite{Burghelea-Haller06a}).
If $M$ is closed and of odd dimension, then
the involution $\nu^{*}$, being affine over $-\mathsf{id}$, possesses a
unique fixed point in $\mathfrak{Eul}^{*}(M;\C)$, which corresponds to the
canonic co-Euler structure
$$
\mathfrak{e}^{*}_{\mathsf{can}}:=[g,(\alpha_{\mathsf{can}}=0,\alpha_{\partial}=0)]
$$
where $\alpha_{\mathsf{can}}=0$, because for odd dimensional closed manifolds $\mathbf{e}(M,g)=0$ and
forms $\alpha_{\partial}=0$, see \cite[Section 2.2]{Burghelea-Haller}.
\end{rem}

\subsubsection{\textbf{Infinitesimal variation of co-Euler structures without base point}}
\label{subsection_Variation formulas for Co-Euler structures without base point}
The following result generalizes \cite[(56)]{Burghelea-Haller}.
\begin{prop}
 \label{Lemma_variation_co-Euler_structures_on_man_with_boundary_without_base_point}
Let $\mathbb{M}$ be a bordism and assume that the relative Euler characteristics
$\chi(M,\partial_{+}M)=0$. Consider $\{(g_{u},\underline{\alpha}_{u})\}_{u}$ a smooth real one-parameter family of Riemannian metrics $g_{u}$ and relative forms
$\underline{\alpha}_{u}$,
representing the \textit{same} co-Euler structure
$[g_{u},\underline{\alpha}_{u}]\in\mathfrak{Eul}^{*} (\mathbb{M};\C)$.  For each Riemannian metric $g_{u}$ consider
the forms
$$
\underline{\mathbf{e}}(\mathbb{M},g_{u})\in\Omega^{m}(M,\partial M;\Theta_{M}^{\C})$$ and
$$B(\partial_{+}M,\partial_{-}M,g_{u})\in\Omega^{m-1}(\partial M;\Theta_{M}^{\C})$$ from
Definition \ref{definitions_of_modified_e_partial_B_forms_on_bordisms} as well as the relative Chern-Simon's
form $$\underline{\widetilde{\mathbf{e}}}(\mathbb{M},g_{u},g_{w})\in \Omega^{m-1}(M,\partial M;\Theta_{M}^{\C})$$ from Definition \ref{definition_of_Chern_Simons_secondary_classes_1}. Let $E$ be a complex flat vector bundle over $M$ with flat connection $\nabla^{E}$, endowed
with a smooth family of non-degenerate symmetric bilinears forms $b_{u}$.
If
\begin{equation}
\label{Kamber-Tondeur_form}
\omega(\nabla^{E},b_{u}):=-\frac{1}{2}\Tr(b^{-1}_{u}\nabla^{E}b_{u})\in\Omega^{1}(M;\C)
\end{equation}
denotes
the \textit{Kamber--Tondeur} form
associated to $b_{u}$ and $\nabla^{E}$, see \cite{Burghelea-Haller}, and the integral $\int_{(M,\partial M)}$ is the pairing from (\ref{pairing}). Then, the formulas
\begin{equation}
\label{equation_variation_formula_with_out_base_point_1}
\begin{array}{l}
\left.\frac{\partial}{\partial u}\right.
\int_{(M,\partial M)}
2\underline{\alpha}_{u}\wedge\omega(\nabla^{E},b_{u})
\\
\hspace{2cm}=-(-1)^{m}\int_{(M,\partial M)}\underline{\mathbf{e}}(\mathbb{M},g_{u})\Tr\left(b^{-1}_{u}\dot{b}_{u}\right)\\
\hspace{2cm}\phantom{=}+2\int_{(M,\partial M)}
\left.\frac{\partial}{\partial\tau}\right|_{\tau=0}\underline{\widetilde{\mathbf{e}}}(\mathbb{M},g_{u},g_{u}+\tau\dot{g}_{u})
\wedge\omega(\nabla^{E},b_{u})
\end{array}
\end{equation}
and
\begin{equation}
\label{equation_variation_formula_with_out_base_point_2}
\left.\frac{\partial}{\partial u}\right.\int_{\partial M}B(\partial_{+}M,\partial_{-}M,g_{u})=
\int_{\partial M}\left.\frac{\partial}{\partial\tau}\right|_{\tau=0}B(\partial_{+}M,\partial_{-}M,g+\tau \dot{g}_{u}).
\end{equation}
hold.
\end{prop}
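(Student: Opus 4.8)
The plan is to differentiate the relation defining the equivalence $\sim^{cs}$ and then integrate by parts in the relative complex. First I would exploit the hypothesis that $[g_{u},\underline{\alpha}_{u}]$ is the \emph{same} co-Euler structure for every $u$: by Lemma \ref{definition_equivalence_relation_without_base_point} there are, for $w$ close to $u$, relative $(m-2)$-forms $\underline{\gamma}_{u,w}$ with $\underline{\alpha}_{w}-\underline{\alpha}_{u}=\underline{\widetilde{\mathbf{e}}}(\mathbb{M},g_{u},g_{w})+\mathbf{d}\underline{\gamma}_{u,w}$ and $\underline{\gamma}_{u,u}=0$; fixing once and for all a continuous linear right inverse of $\mathbf{d}$ on exact relative forms one may take $\underline{\gamma}_{u,w}$ smooth in $(u,w)$. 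Differentiating at $w=u$ and using the cocycle identities (\ref{equation_relations_chern_simons_relative_euler_forms_3}) of Lemma \ref{basic_properties_of_chern_simon_forms} (so that to first order only the linear path $\tau\mapsto g_{u}+\tau\dot{g}_{u}$ contributes) yields
\[
\dot{\underline{\alpha}}_{u}=\left.\frac{\partial}{\partial\tau}\right|_{\tau=0}\underline{\widetilde{\mathbf{e}}}(\mathbb{M},g_{u},g_{u}+\tau\dot{g}_{u})+\mathbf{d}\underline{\delta}_{u}
\]
for a smooth family $\underline{\delta}_{u}\in\Omega^{m-2}(M,\partial M;\Theta_{M}^{\C})$.

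Next I would record the local variational identity for the Kamber--Tondeur form: differentiating (\ref{Kamber-Tondeur_form}) and using that $\nabla^{E}$ is flat (so $\nabla^{E}b_{u}^{-1}=-b_{u}^{-1}(\nabla^{E}b_{u})b_{u}^{-1}$, and the graded cyclicity of the trace of endomorphism-valued forms applies) gives $\frac{\partial}{\partial u}\omega(\nabla^{E},b_{u})=-\tfrac12\,d\Tr(b_{u}^{-1}\dot{b}_{u})$; the same flatness shows $\omega(\nabla^{E},b_{u})$ is closed.

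Now I would expand the left-hand side of (\ref{equation_variation_formula_with_out_base_point_1}) by differentiating under the integral sign (legitimate since $M$ and $\partial M$ are compact) and applying the Leibniz rule for $\wedge$. The $\dot{\underline{\alpha}}_{u}$-contribution becomes, by the first step, $2\int_{(M,\partial M)}\left.\frac{\partial}{\partial\tau}\right|_{\tau=0}\underline{\widetilde{\mathbf{e}}}(\mathbb{M},g_{u},g_{u}+\tau\dot{g}_{u})\wedge\omega(\nabla^{E},b_{u})$ plus the term $2\int_{(M,\partial M)}(\mathbf{d}\underline{\delta}_{u})\wedge\omega(\nabla^{E},b_{u})$; the latter vanishes because $\int_{(M,\partial M)}$ annihilates exact relative $m$-forms (Stokes on $M$ and on the closed manifold $\partial M$; equivalently, the pairing (\ref{pairing}) descends to cohomology, see (\ref{pairing_in_cohomology})), so by (\ref{equation_Leibnit_rule_relative_forms}) it equals $-(-1)^{m-2}\int_{(M,\partial M)}\underline{\delta}_{u}\wedge d\omega(\nabla^{E},b_{u})=0$. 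For the $\dot{\omega}$-contribution, set $\phi:=\Tr(b_{u}^{-1}\dot{b}_{u})\in\Omega^{0}(M)$; then $2\int_{(M,\partial M)}\underline{\alpha}_{u}\wedge\frac{\partial}{\partial u}\omega(\nabla^{E},b_{u})=-\int_{(M,\partial M)}\underline{\alpha}_{u}\wedge d\phi$, and applying $\int_{(M,\partial M)}$ to the Leibniz expansion of $\mathbf{d}(\underline{\alpha}_{u}\wedge\phi)$ together with $\mathbf{d}\underline{\alpha}_{u}=\underline{\mathbf{e}}(\mathbb{M},g_{u})$ gives $\int_{(M,\partial M)}\underline{\alpha}_{u}\wedge d\phi=(-1)^{m}\int_{(M,\partial M)}\underline{\mathbf{e}}(\mathbb{M},g_{u})\,\phi$. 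Collecting the two contributions reproduces exactly the right-hand side of (\ref{equation_variation_formula_with_out_base_point_1}).

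Finally, (\ref{equation_variation_formula_with_out_base_point_2}) is the chain rule: $B(\partial_{+}M,\partial_{-}M,\cdot)$ is a smooth local functional of the Riemannian metric, so $\frac{\partial}{\partial u}B(\partial_{+}M,\partial_{-}M,g_{u})=\left.\frac{\partial}{\partial\tau}\right|_{\tau=0}B(\partial_{+}M,\partial_{-}M,g_{u}+\tau\dot{g}_{u})$ pointwise on $\partial M$, and one integrates over $\partial M$, interchanging $\frac{\partial}{\partial u}$ with $\int_{\partial M}$. The hard part is the first paragraph: one must check that the ``modulo exact relative forms'' ambiguity in the definition of $\sim^{cs}$ can be resolved by a family that depends smoothly on $u$, so termwise differentiation is legitimate, and that the derivative at $w=u$ of $\underline{\widetilde{\mathbf{e}}}(\mathbb{M},g_{u},g_{w})$ --- which a priori depends on the chosen path of metrics --- agrees, up to an exact relative form, with $\left.\frac{\partial}{\partial\tau}\right|_{\tau=0}\underline{\widetilde{\mathbf{e}}}(\mathbb{M},g_{u},g_{u}+\tau\dot{g}_{u})$; this is precisely where Lemma \ref{basic_properties_of_chern_simon_forms} does the work. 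The remaining steps are routine sign bookkeeping with (\ref{equation_differential_in_cone}) and (\ref{equation_Leibnit_rule_relative_forms}), the vanishing of $\int_{(M,\partial M)}$ on exact relative top forms, and the closedness of $\omega(\nabla^{E},b_{u})$.
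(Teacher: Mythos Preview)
Your argument is correct and follows essentially the same route as the paper's own proof: differentiate $\underline{\alpha}_u$ using the $\sim^{cs}$ relation to produce the Chern--Simons term, differentiate $\omega(\nabla^E,b_u)$ to get $-\tfrac12\,d\Tr(b_u^{-1}\dot b_u)$, and then integrate by parts in the relative complex via (\ref{equation_Leibnit_rule_relative_forms}) together with $\mathbf d\underline{\alpha}_u=\underline{\mathbf e}(\mathbb M,g_u)$. The only difference is cosmetic: the paper simply writes $\dot{\underline{\alpha}}_u=\left.\tfrac{\partial}{\partial\tau}\right|_0\underline{\widetilde{\mathbf e}}(\mathbb M,g_u,g_u+\tau\dot g_u)$ and works ``modulo exact relative forms'' throughout, whereas you make the exact correction $\mathbf d\underline{\delta}_u$ explicit and then kill it using closedness of $\omega(\nabla^E,b_u)$ --- a welcome clarification, since it is exactly this closedness that justifies the paper's shorthand.
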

\begin{proof}
First, remark that
$
\scriptstyle\left.\frac{\partial}{\partial u}\right.\underline{\alpha}_{u}\scriptstyle=
\left.\frac{\partial}{\partial w}\right|_{u}\left(\underline{\alpha}_{w}-
\underline{\alpha}_{u}\right)
=
\left.\frac{\partial}{\partial\tau}\right|_{0}\underline{\widetilde{\mathbf{e}}}(\mathbb{M},g_{u},g_{u}+
\tau\dot{g}_{u}),
$
and also that
$
\scriptstyle\left.\frac{\partial}{\partial u}\right.B(\partial_{+}M,\partial_{-}M,g_{u})=
\left.\frac{\partial}{\partial\tau}\right|_{0}B(\partial_{+}M,\partial_{-}M,g+\tau \dot{g}_{u}).
$
From \cite{Burghelea-Haller} we have the identity
$\scriptstyle\left.\frac{\partial}{\partial u}\right.\Tr(b^{-1}_{u}\nabla^{E}b_{u})=d\Tr\left(b^{-1}_{u}\dot{b}_{u}\right).$
Therefore,  since for each $u$, the couple $[g_{u},\underline{\alpha}_{u}]$ represents the same co-Euler structure, we obtain, modulo exact relative
forms
$$
\begin{array}{l}
\scriptstyle\left.\frac{\partial}{\partial u}\right.
\int_{(M,\partial M)}
2\underline{\alpha}_{u}\wedge\omega(\nabla^{E},b_{u})\\
\scriptstyle\hspace{0.2cm}
=\int_{(M,\partial M)}\partial_{w}|_{u}(\underline{\alpha}_{w})\wedge 2\omega(\nabla^{E},b_{u})+
\int_{(M,\partial M)}\underline{\alpha}_{w}\wedge\partial_{w}|_{u}(-\Tr(b_{w}^{-1}\nabla^{E}b_{u}))\\
\scriptstyle\hspace{0.2cm}
=2\int_{(M,\partial M)}\left.\frac{\partial}{\partial\tau}\right|_{0}\underline{\widetilde{\mathbf{e}}}(\mathbb{M},g_{u},g_{u}+
\tau\dot{g}_{u})\wedge\omega(\nabla^{E},b_{u})+
\underbrace{\scriptstyle\int_{(M,\partial M)}-\underline{\alpha}_{u}\wedge d\Tr\left(b^{-1}_{u}\dot{b}_{u}\right)}_{(*)};
\end{array}
$$
with $\underline{\alpha}_{u}=(\alpha_{u},\sigma_{u})$,
$\mathbf{d}\underline{\alpha}_{u}=\underline{\mathbf{e}}(\mathbb{M},g_{u})$ and
Stokes' Theorem, the second term on the right above
becomes
$$
\begin{array}{l}
\scriptstyle(*)=-(-1)^{m}\left(\int_{M}d\alpha_{u}\Tr\left(b^{-1}_{u}\dot{b}_{u}\right)
-\left(\int_{\partial M}i^{*}\left(\alpha_{u}\Tr\left(b^{-1}_{u}\dot{b}_{u}\right)\right)-
\int_{\partial M}\left(d^{\partial}\sigma_{u}i^{*}\Tr\left(b^{-1}_{u}\dot{b}_{u}\right)\right)\right)\right)\\

\scriptstyle\phantom{(*)}=-(-1)^{m}\left(\int_{M}d\alpha_{u}\Tr\left(b^{-1}_{u}\dot{b}_{u}\right)
-\int_{\partial M}(i^{*}\alpha_{u}-d^{\partial}\sigma_{u})i^{*}\Tr\left(b^{-1}_{u}\dot{b}_{u}\right)\right)\\





\scriptstyle\phantom{(*)}=-(-1)^{m}\int_{(M,\partial M)}\mathbf{d}\underline{\alpha}\Tr\left(b^{-1}_{u}\dot{b}_{u}\right)\\



\scriptstyle\phantom{(*)}=-(-1)^{m}\int_{(M,\partial M)}\underline{\mathbf{e}}(\mathbb{M},g_{u})\Tr\left(b^{-1}_{u}\dot{b}_{u}\right).
\end{array}
$$
\end{proof}


\subsection{Co-Euler structures with base point}
\label{section_co-Euler Structures with base point}
Here, we do not assume $\chi(M,\partial_{+}M)$ to vanish. As in the case of a closed manifold,
co-Euler structures still can be defined by introducing a base point $x_{0}$ in the interior of $M$. We denote by $\dot{M}:=M\backslash\{x_{0}\}$,
consider  $(g,\mathbf{\underline{\alpha}})$ with
$\mathbf{\underline{\alpha}}\in\Omega^{m-1}(\dot{M},\partial M;\Theta_{M}^{\C})$ as in (\ref{equation_relative_forms_with_base_point}), and define
\begin{equation}
\label{definition_sets_defining_co_Euler_structures_without_base_point}
\mathbf{E}_{x_{0}}^{*}(\mathbb{M};\C):=\left\{(g,\underline{\alpha}{})\left|
\begin{array}{rcl}
\underline{\alpha}&\in&\Omega^{m-1}(\dot{M},\partial M;\Theta_{M}^{\C})\\
\mathbf{d}\underline{\alpha}&=& \underline{\mathbf{e}}(\mathbb{M},g)\\
\end{array}
\right.
\right\}.
\end{equation}
Since $M$ is assumed to be connected, in view of the first equality 
in (\ref{Lemma_top_degree_relative_cohomology_with_base_point_vanishes}),
the space in (\ref{definition_sets_defining_co_Euler_structures_without_base_point}) is non-empty. Then, as for the case without base point,
we have the relation:
$(g,\mathbf{\underline{\alpha}})\sim^{cs}(g^{\prime},\mathbf{\underline{\alpha}}^{\prime})$ in
$\mathbf{E}_{x_{0}}^{*}(\mathbb{M};\C)$ if and only if
    \begin{equation}
     \label{equation_definition_of_equ_relation_co-Euler_structures_with_base_point}
  \underline{\alpha}^{\prime}-\underline{\alpha}=
  \underline{\mathbf{\widetilde{e}}}(\mathbb{M},g,g^{\prime})\quad\in\text{ }
  \Omega^{m-1}(\dot{M},\partial M;\Theta_{M}^{\C})\slash\mathbf{d}\Omega^{m-2}(\dot{M},\partial M;\Theta_{M}^{\C}).
    \end{equation}
The relation in (\ref{equation_definition_of_equ_relation_co-Euler_structures_with_base_point})
is an equivalence relation for the same reasons as in the case without base point.

\begin{defi}
The quotient
space
\label{definition_Set_of_coeuler_structures_with_base_point}
$$
\begin{array}{l}
 \mathbf{\mathfrak{Eul}}_{x_{0}}^{*}(\mathbb{M};\C):=\mathbf{E}_{x_{0}}^{*}(\mathbb{M};\C)/\sim^{cs}
\end{array}
$$
is called the space of co-Euler structures based at $x_{0}$ on $\mathbb{M}$ and
the equivalence class of the pair $(g,\underline{\alpha})$ is denoted by 
$[g,\underline{\alpha}]$.
\end{defi}

The action of $H^{m-1}(M,\partial M;\Theta_{M}^{\C})$ on
$\mathbf{\mathfrak{Eul}}_{x_{0}}^{*}(\mathbb{M};\C)$ defined by
\begin{equation}
\label{definition_action_of_relative_cohomology_group_on_Coeuler_str_Absolute and Relative_with_base_point}
\begin{array}{c}
\Upsilon^{*}:H^{m-1}(M,\partial M;\Theta_{M}^{\C})\times\mathbf{\mathfrak{Eul}}_{x_{0}}^{*}(\mathbb{M};\C)
\rightarrow \mathbf{\mathfrak{Eul}}_{x_{0}}^{*}(\mathbb{M};\C)\\
\left([\underline{\beta}],[g,\underline{\alpha}]\right)   \mapsto
[g,\underline{\alpha}-\underline{\beta}]
\end{array}
\end{equation}
is well defined and independent of each choice of representatives,
see Lemma \ref{lemma_action_is_well_defined_free_transitive_coeuleur_Structures_absolute}.
In addition, the action specified by (\ref{definition_action_of_relative_cohomology_group_on_Coeuler_str_Absolute and Relative_with_base_point})
is free and transitive since
$H^{m-1}(M,\partial M)\cong H^{m-1}(\dot{M},\partial M)$, see
(\ref{Lemma_top_degree_relative_cohomology_with_base_point_vanishes}).

Finally, the
flip map
\begin{equation}
 \label{equatio}
 \begin{array}{lrcl}
 \nu^{*}: &\mathbf{\mathfrak{Eul}}_{x_{0}}^{*}(\mathbb{M};\C)
 &\rightarrow&\mathbf{\mathfrak{Eul}}_{x_{0}}^{*}({\mathbb{M}^{\prime}};\C)\\
 &[g,\underline{\alpha}]&\mapsto&\left[g,\left(-1)^{m}\underline{\alpha}\right)\right]\\
 \end{array}
\end{equation}
intertwines the spaces $\mathbf{\mathfrak{Eul}}_{x_{0}}^{*}(M,\partial_{\pm}M,\partial_{\mp}M;\C)$ and
it is affine over the involution in relative cohomology
$$(-1)^{m}\mathsf{id}:H^{m-1}(\dot{M},\partial M;\Theta_{M}^{\C})\rightarrow H^{m-1}(\dot{M},\partial M;\Theta_{M}^{\C}).$$

\subsubsection{\textbf{Variational formula for co-Euler structures with base point}}
\label{subsection_Variation formulas for Co-Euler structures with base point}
We give an analog to Proposition
\ref{Lemma_variation_co-Euler_structures_on_man_with_boundary_without_base_point} in the case of co-Euler structures with base point.
Let $\alpha\in\Omega^{m-1}(\dot{M};\Theta_{M}^{\C})$ be a smooth differential form on $M$,
with possible singularity $x_{0}\in\mathbf{int}(M)$,
the interior of $M$ and $\dot{M}:=M\backslash\{x_{0}\}.$
For
$\omega$ a closed $1$-form on $M$, we make sense of integrals of the type
$\int_{M}\alpha\wedge\omega,$
by means of a \textit{regularization procedure} \index{regularization procedure} as described in the remaining of this section.

First, recall that the \textit{local degree}  of $\alpha$ at the singularity $x_{0}$, see
for instance \cite[Chapter II.11]{Bott-Tu}, is given by
\begin{equation}
\label{Local_degree}
\mathsf{deg}_{x_{0}}(\alpha):=\lim_{\delta\rightarrow 0}\int_{\partial(\mathbb{B}^{m}(\delta,x))}i^{*}\alpha,
\end{equation}
where $\partial(\mathbb{B}^{m}(\delta,x))$ indicates the boundary
of the $m$-dimensional closed ball $\mathbb{B}^{m}(\delta,x)$ centered at $x_{0}$ and radius $\delta>0$. With the
standard sign convention involved in Stokes' Theorem, $\partial(\mathbb{B}^{m}(\delta,x))$ is oriented
with respect to the unit outwards pointing vector field normal
to $\mathbb{B}^{m}(\delta,x)$.

\begin{lem}
\label{Lemma_main_properties_of_S_in_particular_linearity_independance_and_how_it_acts_on_exact_forms}
Let $\alpha$ be a smooth form in $\Omega^{m-1}(\dot{M};\Theta_{M}^{\C})$
such that
$d\alpha$ and $\alpha_{\partial}$ are smooth and without singularities in $M$. For $\omega$ a smooth closed $1$-form on $M$, choose
a smooth function $f\in C^{\infty}(M)$ such that the $1$-form
$$
\omega^{\prime}:=\omega-df
$$
is smooth on $M$ and vanishes on a small neighborhood of $x_{0}$.
Then the complex-valued function 
\begin{equation}
 \label{definition_function_S}
\mathcal{S}(\underline{\alpha},\omega,f):=\int_{(M,\partial M)}\underline{\alpha}\wedge\omega^{\prime}+
(-1)^{m}\int_{(M,\partial M)}\mathbf{d}\underline{\alpha}\wedge f
- f(x_{0})\mathsf{deg}_{x_{0}}(\alpha),
\end{equation}
does not depend on the choice of $f$ and satisfies the following assertions.
\begin{enumerate}
\item
If $\underline{\beta}\in\Omega^{m-1}(M,\partial M;\Theta_{M}^{\C})$, i.e., without singularities, then
$$
\mathcal{S}(\underline{\beta},\omega)=\int_{(M,\partial M)}\underline{\beta}\wedge\omega.
$$
In particular, $\mathcal{S}(\mathbf{d}\underline{\gamma},\omega)=0$
for all $\underline{\gamma}\in\Omega^{m-2}(M,\partial M;\Theta_{M}^{\C})$.
\item
$\mathcal{S}(\omega,\underline{\alpha})$ is linear in $\underline{\alpha}$ and in $\omega$.
\item
$\mathcal{S}(\underline{\alpha},dh)=(-1)^{m}\int_{(M,\partial M)}\mathbf{d}\underline{\alpha}\wedge h
-h(x)\mathsf{deg}_{x_{0}}(\alpha)$.
\end{enumerate}
\end{lem}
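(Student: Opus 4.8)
The plan is to prove well-definedness first, then the three assertions in order, since (1)--(3) depend on having $\mathcal{S}$ well-defined. For well-definedness, suppose $f_{1},f_{2}\in C^{\infty}(M)$ both make $\omega-df_{i}$ vanish near $x_{0}$. Set $h:=f_{1}-f_{2}$, so $dh=(\omega-df_{2})-(\omega-df_{1})$ vanishes near $x_{0}$, i.e. $h$ is locally constant near $x_{0}$, say $h\equiv h(x_{0})$ there. I would compute the difference $\mathcal{S}(\underline{\alpha},\omega,f_{1})-\mathcal{S}(\underline{\alpha},\omega,f_{2})$ and show it equals
$$
\int_{(M,\partial M)}\underline{\alpha}\wedge d(f_{2}-f_{1})+(-1)^{m}\int_{(M,\partial M)}\mathbf{d}\underline{\alpha}\wedge(f_{1}-f_{2})-\bigl(f_{1}(x_{0})-f_{2}(x_{0})\bigr)\mathsf{deg}_{x_{0}}(\alpha).
$$
The first two integrals combine, via the Leibniz rule (\ref{equation_Leibnit_rule_relative_forms}) and Stokes on $M\setminus\mathbb{B}^{m}(\delta,x_{0})$ with $\delta\to 0$, into a boundary integral over $\partial(\mathbb{B}^{m}(\delta,x_{0}))$ whose limit is $-h(x_{0})\,\mathsf{deg}_{x_{0}}(\alpha)$ by the definition (\ref{Local_degree}); since $h(x_{0})=f_{1}(x_{0})-f_{2}(x_{0})$ this cancels the last term, giving $0$. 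The one subtlety here is the sign: $\partial(\mathbb{B}^{m}(\delta,x_{0}))$ appears with the opposite orientation when regarded as part of $\partial\bigl(M\setminus\mathbb{B}^{m}(\delta,x_{0})\bigr)$ versus its standard outward orientation in (\ref{Local_degree}), which must be tracked carefully together with the $(-1)^{m}$ factors coming from (\ref{pairing}).

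For assertion (1), if $\underline{\beta}$ has no singularity I simply take $f\equiv 0$, so $\omega'=\omega$, the last two terms of (\ref{definition_function_S}) vanish (the degree term because $\mathsf{deg}_{x_{0}}(\beta)=0$ for a smooth form, being the integral of an exact form over a shrinking sphere), and $\mathcal{S}(\underline{\beta},\omega)=\int_{(M,\partial M)}\underline{\beta}\wedge\omega$ directly. For the ``in particular'' clause, apply this to $\underline{\beta}=\mathbf{d}\underline{\gamma}$ with $\underline{\gamma}\in\Omega^{m-2}(M,\partial M;\Theta_{M}^{\C})$: then $\int_{(M,\partial M)}\mathbf{d}\underline{\gamma}\wedge\omega=\pm\int_{(M,\partial M)}\underline{\gamma}\wedge d\omega=0$ since $\omega$ is closed, using the Leibniz rule (\ref{equation_Leibnit_rule_relative_forms}) and the fact that the pairing (\ref{pairing}) descends to cohomology (so $\mathbf{d}$-exact pairs against closed forms to zero). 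Assertion (2) is immediate: for fixed $f$ the expression (\ref{definition_function_S}) is manifestly $\C$-linear in $\underline{\alpha}$ (all three terms are), and linearity in $\omega$ follows because if $\omega_{1},\omega_{2}$ use auxiliary functions $f_{1},f_{2}$ then $\omega_{1}+\omega_{2}$ may use $f_{1}+f_{2}$, and each term of (\ref{definition_function_S}) is linear in the pair $(\omega',f)$ jointly.

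For assertion (3), take $\omega=dh$ with $h\in C^{\infty}(M)$; then I may choose $f:=h$, so that $\omega'=dh-dh=0$, which in particular vanishes near $x_{0}$ as required. Substituting into (\ref{definition_function_S}): the first term $\int_{(M,\partial M)}\underline{\alpha}\wedge\omega'$ vanishes, and the remaining two terms read exactly $(-1)^{m}\int_{(M,\partial M)}\mathbf{d}\underline{\alpha}\wedge h-h(x_{0})\,\mathsf{deg}_{x_{0}}(\alpha)$, which is the claimed formula. The main obstacle throughout is the orientation-and-sign bookkeeping in the well-definedness argument --- reconciling the Stokes boundary term over the punctured manifold with the sign conventions baked into both (\ref{Local_degree}) and the relative pairing (\ref{pairing}) --- but once that is pinned down, (1)--(3) are essentially formal consequences.
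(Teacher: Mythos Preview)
Your well-definedness argument is the same as the paper's: take two admissible $f_{1},f_{2}$, use Leibniz and Stokes on $M\setminus\mathbb{B}^{m}(\delta,x_{0})$, and match the sphere limit with $\mathsf{deg}_{x_{0}}(\alpha)$. Your treatment of (2) is likewise the paper's (the paper simply says linearity in $\omega$ follows from $f$-independence and omits the rest).

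For (3) you take a cleaner route than the paper. The paper computes $\mathcal{S}_{f}(\underline{\alpha},\omega+dh)$ for a general closed $\omega$, rewrites it as $\mathcal{S}_{f-h}(\underline{\alpha},\omega)$ plus the claimed expression, and then invokes $f$-independence and $\omega$-linearity to extract $\mathcal{S}(\underline{\alpha},dh)$. Your direct choice $f:=h$ (giving $\omega'=0$, which certainly vanishes near $x_{0}$) lands on the formula in one line. This is legitimate and shorter; the paper's detour has the side benefit of reproving linearity in $\omega$, but you have already handled that in (2).

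There is, however, a genuine slip in your argument for (1). You write ``take $f\equiv 0$, so $\omega'=\omega$''. But the defining formula (\ref{definition_function_S}) is only set up for $f$ with $\omega-df$ vanishing near $x_{0}$; the choice $f\equiv 0$ fails this constraint unless $\omega$ itself happens to vanish there. To fix it, take any admissible $f$ and compute directly: since $\underline{\beta}$ is smooth, $\mathsf{deg}_{x_{0}}(\beta)=0$, and the relative Leibniz rule plus Stokes (no small-sphere limit needed now) gives $\int_{(M,\partial M)}\underline{\beta}\wedge df=(-1)^{m}\int_{(M,\partial M)}\mathbf{d}\underline{\beta}\wedge f$, so the $df$ and $(-1)^{m}\mathbf{d}\underline{\beta}\wedge f$ contributions cancel and $\mathcal{S}(\underline{\beta},\omega)=\int_{(M,\partial M)}\underline{\beta}\wedge\omega$. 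Alternatively, you may observe that for \emph{smooth} $\underline{\beta}$ the well-definedness computation goes through for any $f$ (the small-sphere integral tends to zero regardless of whether $f_{1}-f_{2}$ is constant near $x_{0}$), which then retroactively licenses $f\equiv 0$; but you should say this explicitly.
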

\begin{proof}
Without loss of generality assume $\mathcal{X}(\alpha)=\{x\}$.
We want to know how the function $\int_{(M,\partial M)}\underline{\alpha}\wedge\omega^{\prime}$ changes,
with respect to $f$. Let us take $f_{1},f_{2}\in C^{\infty}(M)$ two functions as above, such that
the corresponding one forms $\omega^{\prime}_{1},\omega^{\prime}_{2}$
vanish on a small open neighborhood of $x_{0}$, so that $d(f_{2}-f_{1})=0$ locally around $x_{0}$; that means
$f_{2}-f_{1}$ is  constant\footnote{If we choose $f_{2}(x)=f_{1}(x)=0$, then $f_{2}-f_{1}=0$ around $x_{0}$.}
on a small neighborhood of $x_{0}$. Now, consider the variation
$$
\begin{array}{lll}
\scriptstyle\Delta&\scriptstyle=&\scriptstyle\int_{(M,\partial M)}\underline{\alpha}\wedge(w_{2}^{\prime}-w_{1}^{\prime})\\
&\scriptstyle=&\scriptstyle\int_{M\backslash\{x\}}\alpha\wedge(w^{\prime}_{2}-w^{\prime}_{1})-\int_{\partial M}\alpha_{\partial}\wedge i^{*}(w^{\prime}_{2}-w^{\prime}_{1})\\
&\scriptstyle=&\scriptstyle-\int_{M\backslash\{x\}}\alpha\wedge d(f_{2}-f_{1})+\int_{\partial M}\alpha_{\partial}\wedge i^{*}(d(f_{2}-f_{1})),
\end{array}
$$
We develop both terms on the right of the last equality above.
The first one, the integral over $M$, can be re written as
$$
\scriptstyle-\int_{M\backslash\{x\}}
\alpha\wedge d(f_{2}-f_{1})=-(-1)^{m-1}\int_{M\backslash\{x\}}d(\alpha(f_{2}-f_{1}))
+(-1)^{m-1}\int_{M\backslash\{x\}}d\alpha\wedge(f_{2}-f_{1}),
$$
whereas the second one, the integral over the boundary becomes
$$
\begin{array}{lll}
\scriptstyle\int_{\partial M}\alpha_{\partial}\wedge d^{\partial} i^{*}(f_{2}-f_{1})&\scriptstyle=&
\scriptstyle(-1)^{m-2}\underbrace{\scriptstyle\int_{\partial M}d^{\partial}(\alpha_{\partial}\wedge i^{*}(f_{2}-f_{1}))}_{=0}
-(-1)^{m-2}\int_{\partial M}d^{\partial}\alpha_{\partial}\wedge i^{*}(f_{2}-f_{1})\\
&\scriptstyle=&\scriptstyle(-1)^{m-1}\int_{\partial M}d^{\partial}\alpha_{\partial}\wedge i^{*}(f_{2}-f_{1})
\end{array}
$$
and therefore
$$
\begin{array}{lll}
\scriptstyle\Delta
&\scriptstyle=&\scriptstyle-(-1)^{m-1}\left(\int_{M\backslash\{x\}}d(\alpha(f_{2}-f_{1}))-\int_{M\backslash\{x\}}d\alpha\wedge(f_{2}-f_{1})
-\int_{\partial M}d^{\partial}\alpha_{\partial}\wedge i^{*}(f_{2}-f_{1})\right)\\
&\scriptstyle=&\scriptstyle-(-1)^{m-1}\left(\int_{M\backslash\{x\}}d(\alpha(f_{2}-f_{1}))-\int_{M}d\alpha\wedge(f_{2}-f_{1})
-\int_{\partial M}d^{\partial}\alpha_{\partial}\wedge i^{*}(f_{2}-f_{1})\right),
\end{array}
$$
where we have used $$\scriptstyle\int_{M\backslash\{x\}}d\alpha\wedge(f_{2}-f_{1})=\int_{M}d\alpha\wedge(f_{2}-f_{1}),$$
since by assumption, the form $d\alpha$ does not have singularities on $M$. Hence,
to make sense of $\Delta$, we now make sense of the integral
$
\int_{M\backslash\{x\}}d(\alpha(f_{2}-f_{1})).
$
This integral can be computed as the limit:
$$
\scriptstyle\int_{M\backslash\{x\}}d(\alpha(f_{2}-f_{1})):=
\lim_{\delta\rightarrow 0}\int_{M\backslash\mathbb{B}(\delta,x)}d(\alpha(f_{2}-f_{1}))
$$
where $\mathbb{B}(\delta,x)$ is the closed ball centered at $x_{0}$ of radius $\delta>0$ and with boundary
$\partial(\mathbb{B}(\delta,x))$ endowed with the orientation
specified by the unit outwards pointing vector field normal to $\mathbb{B}(\delta,x)$. Then, by using
Stokes' Theorem with the standard convention, the limit
above can be computed as
$$
\begin{array}{lll}
\scriptstyle\int_{M\backslash\{x\}}d(\alpha(f_{2}-f_{1}))&\scriptstyle=&\scriptstyle
\lim_{\delta\rightarrow 0}\int_{M\backslash\mathbb{B}(\delta,x)}d(\alpha(f_{2}-f_{1}))\\
&\scriptstyle=&\scriptstyle\lim_{\delta\rightarrow 0}
\int_{\partial(M\backslash\mathbb{B}(\delta,x))}i^{*}(\alpha(f_{2}-f_{1}))\\
&\scriptstyle=&\scriptstyle
\lim_{\delta\rightarrow 0}
\left(\int_{\partial M}i^{*}(\alpha(f_{2}-f_{1}))+
\int_{-\partial(\mathbb{B}^{m}(\delta,x))}i^{*}(\alpha(f_{2}-f_{1}))\right)\\
&\scriptstyle=&\scriptstyle
\int_{\partial M}i^{*}(\alpha(f_{2}-f_{1}))+
\lim_{\delta\rightarrow 0}\int_{-\partial(\mathbb{B}^{m}(\delta,x))}i^{*}(\alpha(f_{2}-f_{1})),\\
\end{array}
$$
where $-\partial(\mathbb{B}^{m}(\delta,x))$ indicates
the sphere with opposite orientation as that of $\partial(\mathbb{B}(\delta,x))$. 
Now, we look at the second term on the right of the equality above. Since $f_{2}-f_{1}$ is constant
on a small neighborhood of $x_{0}$, we have, for $\delta^{\prime}>0$ small enough,
$$
\begin{array}{rl}
\scriptstyle\lim_{\delta\rightarrow 0}\int_{-\partial(\mathbb{B}^{m}(\delta,x))}i^{*}(\alpha(f_{2}-f_{1}))&\scriptstyle=
\scriptstyle(f_{2}-f_{1})(x^{\prime})\lim_{\delta\rightarrow 0}\int_{-\partial(\mathbb{B}^{m}(\delta,x))}i^{*}\alpha\quad\scriptstyle\text{for all }
x^{\prime}\in\mathbb{B}(\delta^{\prime},x),\\
&\scriptstyle=\scriptstyle
(-1)^{m}(f_{2}-f_{1})(x)\mathsf{deg}_{x_{0}}(\alpha),
\end{array}
$$
where the sign $(-1)^{m}$ above comes from the standard convention taken for the Stokes' Theorem.
Hence
$$
\scriptstyle\int_{M\backslash\{x\}}d(\alpha(f_{2}-f_{1}))=\int_{\partial M}i^{*}(\alpha(f_{2}-f_{1}))+
(-1)^{m}(f_{2}-f_{1})(x)\mathsf{deg}_{x_{0}}(\alpha).
$$
Therefore the variation $\Delta$ becomes
$$
\begin{array}{lll}
\scriptstyle\Delta&\scriptstyle=&\scriptstyle-(-1)^{m-1}\left[\int_{\partial M}i^{*}(\alpha(f_{2}-f_{1}))+(-1)^{m}(f_{2}-f_{1})(x)\mathsf{deg}_{x_{0}}(\alpha)\right.\\
\scriptstyle&\scriptstyle&\scriptstyle \hspace{5cm}\left.-\int_{M}d\alpha\wedge(f_{2}-f_{1})
-\int_{\partial M}d^{\partial}\alpha_{\partial}\wedge i^{*}(f_{2}-f_{1})\right]\\
&\scriptstyle=&\scriptstyle-(-1)^{m-1}\left[\int_{\partial M}(i^{*}\alpha-d^{\partial}\alpha_{\partial})\wedge i^{*}(f_{2}-f_{1})
-\int_{M}d\alpha\wedge(f_{2}-f_{1})\right.+
\scriptstyle\left.(-1)^{m}(f_{2}-f_{1})(x)\mathsf{deg}_{x_{0}}(\alpha)\right]\\
&\scriptstyle=&\scriptstyle-(-1)^{m-1}\left[-\int_{(M,\partial M)}\mathbf{d}\underline{\alpha}\wedge(f_{2}-f_{1})
+(-1)^{m}(f_{2}-f_{1})(x)\mathsf{deg}_{x_{0}}(\alpha)\right]\\
&\scriptstyle=&\scriptstyle-\left((-1)^{m}\int_{(M,\partial M)}\mathbf{d}\underline{\alpha}\wedge(f_{2}-f_{1})
-(f_{2}-f_{1})(x)\mathsf{deg}_{x_{0}}(\alpha)\right),
\end{array}
$$
and so
$$
\begin{array}{rl}
\scriptstyle\mathcal{S}_{f_{2}}(\underline{\alpha},\omega)-\mathcal{S}_{f_{1}}(\underline{\alpha},\omega)
&\scriptstyle=\Delta+\left((-1)^{m}\int_{(M,\partial M)}\mathbf{d}\underline{\alpha}\wedge(f_{2}-f_{1})-
(f_{2}-f_{1})(x)\mathsf{deg}_{x_{0}}(\alpha)\right)\\
&\scriptstyle=0,\\
\end{array}
$$
so
$
\mathcal{S}_{f}(\underline{\alpha},\omega)
$
does not depend on the choice of $f$.
Remark the
linearity of $\mathcal{S}(\underline{\alpha},\omega)$ with respect to $\omega$ immediately
follows also from its independance of $f$. The remaining assertions in
(1) and (2) follow from similar considerations as above, we omit the details.
Let us turn to assertion (3).
For a smooth function $h$, we compute
$$
\begin{array}{rcl}
\scriptstyle\mathcal{S}_{f}(\underline{\alpha},\omega+dh)&\scriptstyle=&\scriptstyle\int_{(M,\partial M)}\underline{\alpha}\wedge(\omega+dh-df)+
(-1)^{m}\int_{(M,\partial M)}\mathbf{d}\underline{\alpha}\wedge f-f(x_{0})\mathsf{deg}_{x_{0}}(\alpha),\\
&\scriptstyle=&\scriptstyle\int_{(M,\partial M)} \underline{\alpha}\wedge(\omega+d(h-f))\\
&\scriptstyle&\scriptstyle\hspace{1cm}+(-1)^{m}\int_{(M,\partial M)}\left(\mathbf{d\underline{\alpha}}\wedge(f-h)+\mathbf{d\underline{\alpha}}\wedge h\right)\\
&&\scriptstyle\hspace{4cm}
-\left(f(x_{0})-h(x)\right)\mathsf{deg}_{x_{0}}(\alpha)-h(x)\mathsf{deg}_{x_{0}}(\alpha),\\
&\scriptstyle=&\scriptstyle\int_{(M,\partial M)}\underline{\alpha}\wedge(\omega-d(f-h))\\
&\scriptstyle&\scriptstyle\hspace{1cm}+(-1)^{m}\int_{(M,\partial M)}\mathbf{d\underline{\alpha}}\wedge(f-h)
-(f-h)(x)
\mathsf{deg}_{x_{0}}(\alpha)+\\
&&\scriptstyle\hspace{4.5cm}
(-1)^{m}\int_{(M,\partial M)}\mathbf{d\underline{\alpha}}\wedge h-h(x)\mathsf{deg}_{x_{0}}(\alpha),\\
&\scriptstyle=&\scriptstyle\mathcal{S}_{f-h}(\underline{\alpha},\omega)+(-1)^{m}\int_{(M,\partial M)}\mathbf{d\underline{\alpha}}\wedge h-h(x)
\mathsf{deg}_{x_{0}}(\alpha).
\end{array}
$$
that is,
$$
\begin{array}{lll}
 \scriptstyle(-1)^{m}\int_{(M,\partial M)}\mathbf{d\underline{\alpha}}\wedge h-h(x)\mathsf{deg}_{x_{0}}(\alpha)&\scriptstyle=&
\scriptstyle\mathcal{S}_{f}(\underline{\alpha},\omega+dh)-\mathcal{S}_{f-h}(\underline{\alpha},\omega)\\
&\scriptstyle=&\scriptstyle\mathcal{S}_{f}(\underline{\alpha},\omega+dh)-\mathcal{S}_{f}(\underline{\alpha},\omega)\\
&\scriptstyle=&\scriptstyle\mathcal{S}_{f}(\underline{\alpha},dh)\\
&\scriptstyle=&\scriptstyle\mathcal{S}(\underline{\alpha},dh),
\end{array}
$$
where the second equality above holds, since $\mathcal{S}$ is independent of $f$ and the third one because
$\mathcal{S}$ is linear on $\omega$.
\end{proof}
\begin{cor}
 \label{corollary_alpha_computes_local_degre}
 Let $\alpha$ be as in Lemma \ref{Lemma_main_properties_of_S_in_particular_linearity_independance_and_how_it_acts_on_exact_forms}.
 Then,
 we have the formula
 $$\mathsf{deg}_{x_{0}}(\alpha)=(-1)^{m}\int_{(M,\partial M)}\mathbf{d}\underline{\alpha}.$$
\end{cor}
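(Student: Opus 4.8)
The plan is to deduce the formula directly from assertion~(3) of Lemma~\ref{Lemma_main_properties_of_S_in_particular_linearity_independance_and_how_it_acts_on_exact_forms}, applied to the constant function $h\equiv 1$ on $M$. Since $d(1)=0$, the left-hand side of that assertion becomes $\mathcal{S}(\underline{\alpha},d(1))=\mathcal{S}(\underline{\alpha},0)$, which vanishes because $\mathcal{S}(\underline{\alpha},\cdot)$ is linear in the $1$-form slot by assertion~(2) of the same lemma. On the right-hand side one has $\mathbf{d}\underline{\alpha}\wedge 1=\mathbf{d}\underline{\alpha}$ and $h(x_{0})=1$, so assertion~(3) specializes to
$$
0=(-1)^{m}\int_{(M,\partial M)}\mathbf{d}\underline{\alpha}-\mathsf{deg}_{x_{0}}(\alpha).
$$
Rearranging this identity gives exactly $\mathsf{deg}_{x_{0}}(\alpha)=(-1)^{m}\int_{(M,\partial M)}\mathbf{d}\underline{\alpha}$, as claimed.

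There is essentially no obstacle here: all the analytic content — the regularization of $\int_{M}\alpha\wedge\omega$ near the singularity $x_{0}$ and the appearance of the local degree term coming from Stokes' theorem applied on $M$ minus a small geodesic ball around $x_{0}$ — has already been carried out in the proof of Lemma~\ref{Lemma_main_properties_of_S_in_particular_linearity_independance_and_how_it_acts_on_exact_forms}. The only points worth checking are that the constant function is an admissible choice of $h$ in assertion~(3) (there is no restriction on $h$ beyond smoothness, and $\omega=0$ together with $f=0$ trivially meets the hypotheses of the lemma), and that $\mathbf{d}\underline{\alpha}=(d\alpha,\,i^{*}\alpha-d^{\partial}\alpha_{\partial})$ is genuinely smooth on all of $M$ and $\partial M$ by the standing hypothesis on $\alpha$, so that the pairing $\int_{(M,\partial M)}\mathbf{d}\underline{\alpha}$ is well defined as an ordinary integral.
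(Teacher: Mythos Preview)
Your proof is correct. The paper argues slightly differently: instead of invoking assertion~(3), it goes back to the definition of $\mathcal{S}_{f}$ and computes $\mathcal{S}_{f+f_{0}}(\underline{\alpha},\omega)-\mathcal{S}_{f}(\underline{\alpha},\omega)$ for a constant function $f_{0}$, obtaining $f_{0}\bigl((-1)^{m}\int_{(M,\partial M)}\mathbf{d}\underline{\alpha}-\mathsf{deg}_{x_{0}}(\alpha)\bigr)$; the already established independence of $\mathcal{S}$ from the auxiliary function then forces this quantity to vanish. Your route is shorter and makes fuller use of the lemma: assertion~(3), which was itself derived from that same $f$-independence, collapses to the desired identity as soon as one plugs in $h\equiv 1$. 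Both arguments rest on the same mechanism --- invariance of $\mathcal{S}$ under constant shifts of the auxiliary function --- so the difference is one of packaging rather than substance.
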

\begin{proof}
Let $\omega$, $f$, $\underline{\alpha}$ and $\mathcal{X}(\alpha)$ be
as above and consider $f_{0}$ to be a constant function on $M$. Then we compute
$$
\begin{array}{rl}
\scriptstyle\mathcal{S}_{f+f_{0}}(\underline{\alpha},\omega)&\scriptstyle=
\int_{(M,\partial M)}\underline{\alpha}\wedge\omega^{\prime}\\
&\scriptstyle\hspace{1cm}+(-1)^{m}\int_{(M,\partial M)}\mathbf{d}\underline{\alpha}\wedge f\\
&\scriptstyle\hspace{3cm}+(-1)^{m}f_{0}\int_{(M,\partial M)}\mathbf{d}\underline{\alpha}-
\scriptstyle\left( f(x_{0})\mathsf{deg}_{x_{0}}(\alpha)
+f_{0} \mathsf{deg}_{x_{0}}(\alpha)\right),\\ \\
&\scriptstyle=\mathcal{S}_{f}(\underline{\alpha},\omega)+f_{0}\left((-1)^{m}\int_{(M,\partial M)}\mathbf{d}\underline{\alpha}-
 \mathsf{deg}_{x_{0}}(\alpha)\right).
\end{array}
$$
But, from Lemma \ref{Lemma_main_properties_of_S_in_particular_linearity_independance_and_how_it_acts_on_exact_forms} above,
we know $\mathcal{S}_{f+f_{0}}(\underline{\alpha},\omega)=\mathcal{S}_{f}(\underline{\alpha},\omega)$,
and hence the last term on the right above vanishes, so that
the desired relation between the total degree of the form $\alpha$ and $\underline{\alpha}$ follows.
\end{proof}
The formula obtained in Corollary \ref{corollary_alpha_computes_local_degre}, which
computes the total degree of $\alpha$ in terms of the relative form $\underline{\alpha}$, is used
to conclude the following, and hence generalizing formula
(\ref{equation_variation_formula_with_out_base_point_1})
in Proposition \ref{Lemma_variation_co-Euler_structures_on_man_with_boundary_without_base_point}.

\begin{prop}
\label{Proposition_regularization_function_S_both_boundaries}
Consider a
bordism $\mathbb{M}$, together with
the relative Euler form $\underline{\mathbf{e}}(\mathbb{M},g)$ from
Definition \ref{definition_relative_Euler_form}.
For $x_{0}$ a base point in the interior of $M$, consider
the space $\mathbf{\mathfrak{Eul}}_{x_{0}}^{*}(\mathbb{M};\C)$ of co-Euler structures at $x_{0}$ from Definition \ref{definition_Set_of_coeuler_structures_with_base_point}.
Let $\mathfrak{e}^{*}\in \mathbf{\mathfrak{Eul}}_{x_{0}}^{*}(\mathbb{M};\C)$ be represented by
$(g,\underline{\alpha})$, where $\underline{\alpha}:=(\alpha,\alpha_{\partial})$ is a relative form with
$\alpha\in\Omega^{m-1}(\dot{M};\Theta_{M}^{\C})$
with unique singularity at $x_{0}$ and assume that
$d\alpha$ and $\alpha_{\partial}$ are smooth, i.e. without singularities in $M$.
For $\omega\in\Omega^{1}(M)$, a smooth closed $1$-form on $M$, choose
a smooth function $f\in C^{\infty}(M)$ such that
$
\omega^{\prime}:=\omega-df\in\Omega^{1}(M)
$ is a smooth $1$-form that vanishes on a small neighborhood of $x_{0}$. Then
\begin{equation}
\label{equation_function_S_both_boundaries}
\begin{array}{rl}
\mathcal{S}_{f}(\underline{\alpha},\omega)=&\int_{(M,\partial M)}\underline{\alpha}\wedge\omega^{\prime}+
(-1)^{m}\int_{(M,\partial M)}\underline{\mathbf{e}}(\mathbb{M},g)\wedge f\\
&-f(x_{0})\chi(M,\partial_{-}M)
\end{array}
\end{equation}
In particular, if $\mathfrak{e}^{*}$ is represented by $(g,\underline{\alpha})$ and
 $(g^{\prime},\underline{\alpha}^{\prime})$, then
\begin{equation}
\label{Lemma_variation_of_S_b_constant}
\mathcal{S}(\underline{\alpha}^{\prime},\omega)-\mathcal{S}(\underline{\alpha},\omega)=
 \int_{(M,\partial M)}\underline{\mathbf{\widetilde{e}}}(\mathbb{M},g,g^{\prime})\wedge\omega.
\end{equation}
\end{prop}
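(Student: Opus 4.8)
The plan is to read off both identities directly from the definition (\ref{definition_function_S}) of $\mathcal{S}_{f}$, feeding in the defining property $\mathbf{d}\underline{\alpha}=\underline{\mathbf{e}}(\mathbb{M},g)$ of a co-Euler representative together with Corollary \ref{corollary_alpha_computes_local_degre}, the Chern--Gauss--Bonnet formula for bordisms (Lemma \ref{Theorem_Chern_Gauss_Bonnet_Bordism}), and the elementary properties of $\mathcal{S}$ collected in Lemma \ref{Lemma_main_properties_of_S_in_particular_linearity_independance_and_how_it_acts_on_exact_forms}. For (\ref{equation_function_S_both_boundaries}): by hypothesis $(g,\underline{\alpha})\in\mathbf{E}^{*}_{x_{0}}(\mathbb{M};\C)$, so $\mathbf{d}\underline{\alpha}=\underline{\mathbf{e}}(\mathbb{M},g)$ by (\ref{definition_sets_defining_co_Euler_structures_without_base_point}), which turns the middle summand of (\ref{definition_function_S}) directly into $(-1)^{m}\int_{(M,\partial M)}\underline{\mathbf{e}}(\mathbb{M},g)\wedge f$. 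For the last summand, Corollary \ref{corollary_alpha_computes_local_degre} gives $\mathsf{deg}_{x_{0}}(\alpha)=(-1)^{m}\int_{(M,\partial M)}\mathbf{d}\underline{\alpha}=(-1)^{m}\int_{(M,\partial M)}\underline{\mathbf{e}}(\mathbb{M},g)$, and by the Chern--Gauss--Bonnet formula for bordisms (Lemma \ref{Theorem_Chern_Gauss_Bonnet_Bordism}) the latter equals $\chi(M,\partial_{-}M)$. Substituting both identities into (\ref{definition_function_S}) yields (\ref{equation_function_S_both_boundaries}).

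For (\ref{Lemma_variation_of_S_b_constant}) I would fix one function $f$ with $\omega'=\omega-df$ vanishing near $x_{0}$; since $x_{0}$ is the common singular point of $\alpha$ and $\alpha'$ and $\mathcal{S}$ is independent of this choice (Lemma \ref{Lemma_main_properties_of_S_in_particular_linearity_independance_and_how_it_acts_on_exact_forms}), the same $f$ computes both $\mathcal{S}(\underline{\alpha},\omega)$ and $\mathcal{S}(\underline{\alpha}',\omega)$. Subtracting the two instances of (\ref{definition_function_S}) termwise and using additivity of the local degree gives $\mathcal{S}(\underline{\alpha}',\omega)-\mathcal{S}(\underline{\alpha},\omega)=\mathcal{S}(\underline{\alpha}'-\underline{\alpha},\omega)$, where $\underline{\alpha}'-\underline{\alpha}$ again lies in the domain of $\mathcal{S}$ because $d\alpha$, $d\alpha'$, $\alpha_{\partial}$, $\alpha'_{\partial}$ are all smooth. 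As $(g,\underline{\alpha})$ and $(g',\underline{\alpha}')$ represent the same $\mathfrak{e}^{*}$, relation (\ref{equation_definition_of_equ_relation_co-Euler_structures_with_base_point}) gives $\underline{\alpha}'-\underline{\alpha}=\underline{\mathbf{\widetilde{e}}}(\mathbb{M},g,g')+\mathbf{d}\underline{\gamma}$ for some $\underline{\gamma}\in\Omega^{m-2}(\dot{M},\partial M;\Theta_{M}^{\C})$, so by linearity of $\mathcal{S}$ in its form-argument (Lemma \ref{Lemma_main_properties_of_S_in_particular_linearity_independance_and_how_it_acts_on_exact_forms}(2)),
\[
\mathcal{S}(\underline{\alpha}',\omega)-\mathcal{S}(\underline{\alpha},\omega)=\mathcal{S}\big(\underline{\mathbf{\widetilde{e}}}(\mathbb{M},g,g'),\omega\big)+\mathcal{S}(\mathbf{d}\underline{\gamma},\omega).
\]
Since $\underline{\mathbf{\widetilde{e}}}(\mathbb{M},g,g')\in\Omega^{m-1}(M,\partial M;\Theta_{M}^{\C})$ is nonsingular, Lemma \ref{Lemma_main_properties_of_S_in_particular_linearity_independance_and_how_it_acts_on_exact_forms}(1) identifies the first term with $\int_{(M,\partial M)}\underline{\mathbf{\widetilde{e}}}(\mathbb{M},g,g')\wedge\omega$, which is the right-hand side of (\ref{Lemma_variation_of_S_b_constant}); so everything reduces to showing $\mathcal{S}(\mathbf{d}\underline{\gamma},\omega)=0$.

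The vanishing $\mathcal{S}(\mathbf{d}\underline{\gamma},\omega)=0$ is the only genuine point, since $\underline{\gamma}$ may be singular at $x_{0}$, so that Lemma \ref{Lemma_main_properties_of_S_in_particular_linearity_independance_and_how_it_acts_on_exact_forms}(1) does not apply to $\mathbf{d}\underline{\gamma}$ verbatim. Writing $\underline{\beta}:=\mathbf{d}\underline{\gamma}$ one has $\mathbf{d}\underline{\beta}=0$, which kills the middle summand of (\ref{definition_function_S}) for $\underline{\beta}$, while the $M$-component of $\underline{\beta}$ is $d\gamma$ and $\mathsf{deg}_{x_{0}}(d\gamma)=\lim_{\delta\to0}\int_{\partial(\mathbb{B}^{m}(\delta,x_{0}))}d^{\partial}i^{*}\gamma=0$ by Stokes on the sphere; hence $\mathcal{S}(\mathbf{d}\underline{\gamma},\omega)=\int_{(M,\partial M)}\mathbf{d}\underline{\gamma}\wedge\omega'$. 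Now $\omega'$ vanishes near $x_{0}$, so $\underline{\gamma}\wedge\omega'$ extends to a genuinely smooth relative form in $\Omega^{m-1}(M,\partial M;\Theta_{M}^{\C})$, and the graded Leibniz rule (\ref{equation_Leibnit_rule_relative_forms}) together with $d\omega'=0$ gives $\mathbf{d}\underline{\gamma}\wedge\omega'=\mathbf{d}(\underline{\gamma}\wedge\omega')$; the relative Stokes theorem (Stokes on $M$ combined with Stokes on the closed manifold $\partial M$) then forces $\int_{(M,\partial M)}\mathbf{d}(\underline{\gamma}\wedge\omega')=0$, completing the argument. I expect the only real obstacle to be this last step: one has to exploit the defining property that $\omega'$ is supported away from the base point in order to see that the singular correction term $\underline{\gamma}$ contributes nothing, rather than trying to regularize it directly.
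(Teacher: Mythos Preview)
Your argument is correct and follows the same route as the paper: for (\ref{equation_function_S_both_boundaries}) you substitute $\mathbf{d}\underline{\alpha}=\underline{\mathbf{e}}(\mathbb{M},g)$ into (\ref{definition_function_S}) and invoke Corollary~\ref{corollary_alpha_computes_local_degre} together with Gauss--Bonnet--Chern, exactly as the paper does; for (\ref{Lemma_variation_of_S_b_constant}) the paper simply writes that it ``follows from (\ref{equation_function_S_both_boundaries}) and the defining relation (\ref{equation_definition_of_equ_relation_co-Euler_structures_with_base_point})'', whereas you spell this out via linearity of $\mathcal{S}$ and supply the one detail the paper suppresses, namely that the exact correction $\mathbf{d}\underline{\gamma}$ with possibly singular $\underline{\gamma}$ contributes nothing because $\omega'$ vanishes near $x_{0}$. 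Your treatment of that last point (computing $\mathsf{deg}_{x_{0}}(d\gamma)=0$ via Stokes on the sphere, then extending $\underline{\gamma}\wedge\omega'$ smoothly across $x_{0}$ and applying relative Stokes) is sound and in fact fills a gap the paper leaves implicit.
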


\begin{proof} Under these assumption, from Corollary \ref{corollary_alpha_computes_local_degre}, we have
    $$
    \scriptstyle\mathsf{deg}_{x_{0}}(\alpha)=(-1)^{m}\int_{(M,\partial M)}\mathbf{d}\underline{\alpha}=
    (-1)^{m} \int_{(M,\partial M)}\underline{\mathbf{e}}(\mathbb{M},g)= \chi(M,\partial_{-}M),
    $$
    where the last equality follows from Gauss--Bonnet--Chern Theorem. Therefore,
    (\ref{equation_function_S_both_boundaries}) follows
    from the definition of $\mathcal{S}$ in (\ref{definition_function_S}).
    Finally, formula (\ref{Lemma_variation_of_S_b_constant}) follows from (\ref{equation_function_S_both_boundaries})
    and
    the defining relation (\ref{equation_definition_of_equ_relation_co-Euler_structures_with_base_point}).

\end{proof}

 \section{Poincar\'e duality for (co)-Euler Structures}
 \label{Section_Euler_structure_and_Co-Euler_Structures}
 \subsection{Euler Structures on bordisms}
 Let $\mathbb{M}$ be a compact Riemannian bordism of dimension $m$. Euler structures were introduced by Turaev in \cite{Turaev90} in order to remove the metric ambiguities in the definition of the Reidemeister torsion. In this section, we recall a possible definition adapted to our conventions. For the sake of brevity, we assume that $\chi(M,\partial_{-}M)=0$ and we restrict to the case of Euler structure without base point. The general case, without any assumption on $\chi(M,\partial_{-}M)$, leads to the definition of Euler structures with a base point $x_{0}$ in the interior of $M$ in an analog manner as in the situation for closed manifolds, see \cite{Burghelea-Haller06a} and \cite{Burghelea-Haller06b}.
 \begin{defi}
 \label{definition_adapted_vector_field_bordism}
 Let $X:M\rightarrow TM$ be a vector field on $M$, which is transverse to the zero section, inward pointing along $\partial_{+}M$ and outward pointing along $\partial_{-}M$. We call such a vector field $X$ to be \emph{adapted} to the bordism $\mathbb{M}$.
 \end{defi}
 Let $\mathcal{X}=X^{-1}(0)$ be the set of zeros of $X$. The transversality condition means each $x\in\mathcal{X}$ is non-degenerate with Hopf index $\mathsf{Ind}_{X}(x)\in\{\pm 1\}$. Consider the singular $0$-chain in $M$
 \begin{equation}
  \label{relative_Euler_chain}
   \mathbf{e}(X):=\sum_{x\in\mathcal{X}}\mathsf{Ind}_{X}(x) x\in C_{0}^{\text{sing}}(M;\C)
 \end{equation}
 If $\chi(M,\partial_{-}M)=0$, then by the Hopf--Poincar\'e theorem, see for instance \cite[Lemma 1.2 and Proposition 1.1]{Benedetti-Petronio}, $X$ admits a (smooth) singular $1$-chain $c\in C_{1}^{\text{sing}}(M;\C)$ such that
 \begin{equation}
  \label{relative_one_chain}
 \partial c= \mathbf{e}(X);
 \end{equation}
 this singular $1$-chain will be called an \textit{Euler chain}, see \cite[Section 4]{Burghelea-Haller06b}.

 Let $X_{0}$ and $X_{1}$ be two adapted vector fields to $\mathbb{M}$.
 Then, there exists a smooth one-parameter family of vector fields $X_{t}$ connecting $X_{0}$ to $X_{1}$, with the property that $X_{t}$ is inward pointing along $\partial_{+}M$ and outward pointing along $\partial_{-}M$, for each $t\in I:=[0,1]$. For $p_{M}:M\times I\rightarrow M$, the canonical projection, consider the bundle $p_{M}^{*}TM\rightarrow M\times I$ and denote by $\widetilde{X}\in\Gamma(p_{M}^{*}TM)$ the section
 corresponding to the smooth family of vector fields $X_{t}$. With the help of small perturbations, we may assume that $\widetilde{X}$ is also transversal to the zero section; in other words that $X_{t}$ is adapted to $\mathbb{M}$ for each $t\in I$. Therefore, its zero set $\widetilde{\mathcal{X}}:=\widetilde{X}^{-1}(0)\subset M\backslash \partial M\times I$ is a canonically oriented one dimensional submanifold with boundary
 $\partial\widetilde{\mathcal{X}}=\widetilde{\mathcal{X}}\cap(M\times\partial I)$. Let
 \begin{equation}
  \label{defining_equivalence_class_one_chains}
 \widetilde{\mathbf{e}}(X_{0},X_{1})\in C_{1}^{\text{sing}}(M;\C)/\partial C_{2}^{\text{sing}}(M;\C)
 \end{equation}
 be the equivalence class, called the \textit{Chern--Simons' class}, obtained by projecting
 a representative of the fundamental class of $\widetilde{\mathcal{X}}$ into $M$, by means of the profection $p_{M}$.
 The class $\widetilde{\mathbf{e}}(X_{0},X_{1})$ depend neither on the representative of the fundamental class of $\widetilde{\mathcal{X}}$ nor on the homotopy of vector fields connecting $X_{0}$ to $X_{1}$,
 see \cite{Burghelea-Haller06a}. The class $\widetilde{\mathbf{e}}(X_{0},X_{1})$ is represented by the $0$-set of a generic homotopy connecting $X_{0}$ to $X_{1}$, by a smooth family of vector fields $X_{t}$ adapted to
 $\mathbb{M}$ for each $t\in [0,1]$, so that  the integral
 \begin{equation}
 \label{integration_over_chern-simons_cochain}
 \int_{\widetilde{\mathbf{e}}(X_{0},X_{1})}\omega=\int_{\widetilde{\mathcal{X}}}p_{M}^{*}\omega
 \end{equation}
 is well defined, for every closed one form $\omega\in\Omega^{1}(M)$. Moreover, for the Chern--Simons' classes in (\ref{defining_equivalence_class_one_chains}), and the singular 0-chains in
 (\ref{relative_Euler_chain}), the relations
 \begin{equation}
  \label{defining_relation_Chern_Simons_vectorfield}
  \begin{array}{l}
  \widetilde{\mathbf{e}}(X,X)=0,\\
   \widetilde{\mathbf{e}}(X_{0},X_{1})+\widetilde{\mathbf{e}}(X_{1},X_{2})=\widetilde{\mathbf{e}}(X_{0},X_{2})\\
   \partial\widetilde{\mathbf{e}}(X_{0},X_{1})=\mathbf{e}(X_{1})-\mathbf{e}(X_{0}),\\
   \end{array}
 \end{equation}
 hold.

 Now, for simplicity assume $\chi(M,\partial_{-}M)=0$. Let
 $(X_{0},c_{0})$ and $(X_{1},c_{1})$ be two pairs of  adapted vector fields $X_{0}$ and $X_{1}$ with corresponding singular $1$-chain $c_{0}$ and $c_{1}$ as in (\ref{relative_one_chain}), respectively.
 We call such pairs to be equivalent if and only if
 \begin{equation}
  \label{defining_relation_one_chain}
 c_{1}-c_{0}=\widetilde{\mathbf{e}}(X_{0},X_{1})\in C_{1}^{\text{sing}}(M;\C)/\partial C_{2}^{\text{sing}}(M;\C).
 \end{equation}
 This is an equivalence relation because of the identities in (\ref{defining_relation_Chern_Simons_vectorfield}) and we denote by $[X,c]$ the corresponding equivalence classes.

 \begin{defi}
 \label{def_space_of_Euler_structures}
  Assume $\chi(M;\partial_{-}M)=0$. The space of \textit{Euler structures} $\mathbf{\mathfrak{Eul}}(\mathbb{M};\C)$ is defined as the set of equivalence classes $[X,c]$ of pairs $(X,c)$ under the equivalence relation in
  (\ref{defining_relation_one_chain}).
 \end{defi}
 There is an action $\Upsilon$ of $H_{1}(M;\C)$ on $\mathbf{\mathfrak{Eul}}(\mathbb{M};\C)$, given by $$\Upsilon: ([X,c],[\sigma])\mapsto [X,c+\sigma],$$ for each $\sigma \in H_{1}(M;\C)$ on $[X,c]\in \mathbf{\mathfrak{Eul}}(\mathbb{M};\C)$. This action is well defined, free and transitive, because of the relations in (\ref{defining_relation_Chern_Simons_vectorfield}), see also \cite{Burghelea-Haller06a}, \cite{Benedetti-Petronio} and \cite{Turaev90}.

  Recall that, see \cite[Chapter II.11]{Bott-Tu}, under the involution
  \begin{equation}
  \label{involution_at_boundary}
  \xi:TM\rightarrow TM\quad\text{given by}\quad \xi(Y)=-Y
  \end{equation}
  the Hopf index of $x\in\mathcal{X}$ satisfies $\mathsf{Ind}_{-X}(x)=(-1)^{m}\mathsf{Ind}_{X}(x)$, and hence
  \begin{equation}
  \label{involution_effect_on_e_and_e_tilde}
  \mathbf{e}(- X)=(-1)^{m}\mathbf{e}(X)\quad\text{and}\quad\widetilde{\mathbf{e}}(- X_{1},- X_{2})=(-1)^{m}\widetilde{\mathbf{e}}(X_{1}, X_{2}),
  \end{equation}
  so that, we obtain a flip map, between Euler structures on dual bordisms
 \begin{equation}
  \label{action_Euler_structures_affine_homology}
 \begin{array}{rrcl}
 \nu:&\mathbf{\mathfrak{Eul}}(\mathbb{M};\C)&\rightarrow&\mathbf{\mathfrak{Eul}}({\mathbb{M}^{\prime}};\C)\\
     &[X,c]&\mapsto& [-X,(-1)^{m}c],
 \end{array}
 \end{equation}
 which is affine over the involution in homology
 \begin{equation}
  \label{Flip_map_Euler_structures}
 (-1)^{m}\mathsf{id}:H_{1}(M,\C)\rightarrow H_{1}(M,\C).
 \end{equation}

 \subsection{A relative Mathai--Quillen form}
 \label{section_A relative Mathai Quillen form}
 Let $\mathbb{M}$ be a bordism of dimension $m$ and Riemannian metric $g$. For $\pi:TM\rightarrow M$, recall that the Mathai--Quillen form
  \begin{equation}
  \label{Mathai_Quillen_form}
  \psi(M,g)\in\Omega^{m-1}(TM\backslash M;\pi^{*}\Theta_{M})
  \end{equation}
 associated to the Levi--Civit\`a connection on $TM$,
 satisfies
  \begin{equation}
  \label{derivative_Quillen_form_equals_pull_bakc_Euler_form}
 d\psi(M,g)=\pi^{*}\mathbf{e}(M,g)
  \end{equation}
 where $\mathbf{e}(M,g)$ is the Euler form of $M$, and for (\ref{involution_at_boundary})
 \begin{equation}
  \label{Change_Mathai_Quillen_form_wrt_involution}
 \psi(M,g)=(-1)^{m}\xi^{*}\psi(M,g),
  \end{equation}
 see \cite{Mathai-Quillen} and \cite{Bismut-Zhang}.

 \begin{defi}
 \label{Relative_Mathai_Quillen_form}
 Let $Q\subseteq TM|_{\partial M}$ be the subset of all vectors over
 $\partial_{+}M$ which are inward pointing and all vectors over $\partial_{-}M$ which are outward pointing.
 We define a  \textit{relative Mathai--Quillen form} by
$$ \underline{\psi}(\mathbb{M},g):=(\psi(M,g),\psi_{\partial}(\partial_{+}M,\partial_{-}M,g))\in \Omega^{m-1}(TM\backslash M,Q;\pi^{*}\Theta_{M})$$ where
its boundary component
 \begin{equation}
  \label{boundary_component_Mathai_Quillen_form}
  \psi_{\partial}(\partial_{+}M,\partial_{-}M,g):=\int^{1}_{0}\mathsf{inc}_{s}^{*} \iota_{\partial_{s}} h^{*}\psi(M,g)ds\in \Omega^{m-2}(Q;\pi^{*}\Theta_{M}),
 \end{equation}
 is defined by using the homotopy
 $$
 h:Q\times[0,1]\rightarrow Q\subseteq TM\backslash M\quad
 \text{ given by }\quad
 h_{s}:=s\cdot\mathsf{id}+(1-s)(\varsigma\circ \pi),
 $$
 with $\varsigma$ being the unit vector field in (\ref{definitions_out_in_unit_vector_fields_at_boundary_plus}),
 and $\mathsf{inc}_{s}:Q\rightarrow Q\times [0,1]$, canonical inclusion, and $\iota_{\partial_{s}}$ indicates the contraction with respect to the vector field $\partial_{s}$.
 \end{defi}

  \begin{lem}
 \label{Mathai_Quillen_form_property_2}
  In analogy with (\ref{derivative_Quillen_form_equals_pull_bakc_Euler_form}) and (\ref{Change_Mathai_Quillen_form_wrt_involution}), the relative Mathai--Quillen form from Definition \ref{Relative_Mathai_Quillen_form} satisfies
  \begin{equation}
  \label{Mathai_Quillen_form_1}
  \mathbf{d}\underline{\psi}(\mathbb{M},g)=\pi^{*}\underline{\mathbf{e}}(\mathbb{M},g),
  \end{equation}
  where $\mathbf{d}$ is the differential given in (\ref{equation_differential_in_cone}) and
   \begin{equation}
  \label{Mathai_Quillen_form_flip}
    \underline{\psi}(\mathbb{M},g)=(-1)^{m}\xi^{*}\underline{\psi}({\mathbb{M}^{\prime}},g),
  \end{equation}
  where $\xi$ is the involution in (\ref{involution_at_boundary}). Moreover, if $g_{0}$ and $g_{1}$ be two Riemannian metrics on $M$, then
  \begin{equation}
  \label{Mathai_Quillen_form_metric_dependance}
  \underline{\psi}(\mathbb{M},g_{1})-\underline{\psi}(\mathbb{M},g_{0})=\pi^{*}\underline{\widetilde{\mathbf{e}}}(\mathbb{M},g_{0},g_{1})
  \end{equation}
  modulo $\mathbf{d}(\Omega^{m-2}(TM\backslash M,Q;\pi^{*}\Theta_{M}))$.
 \end{lem}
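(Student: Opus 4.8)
The plan is to verify each of the three assertions by unwinding the definition of $\underline{\psi}(\mathbb{M},g)$ as the relative form $(\psi(M,g),\psi_{\partial}(\partial_{+}M,\partial_{-}M,g))$ and applying the differential $\mathbf{d}$ from (\ref{equation_differential_in_cone}), which here reads $\mathbf{d}(\psi,\psi_{\partial})=(d\psi,\,j^{*}\psi-d^{\partial}\psi_{\partial})$ where $j:Q\hookrightarrow TM\setminus M$ is the inclusion. For (\ref{Mathai_Quillen_form_1}) the first component is immediate from (\ref{derivative_Quillen_form_equals_pull_bakc_Euler_form}): $d\psi(M,g)=\pi^{*}\mathbf{e}(M,g)$. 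The content is in the boundary component, where I must show $j^{*}\psi(M,g)-d^{\partial}\psi_{\partial}(\partial_{+}M,\partial_{-}M,g)=\pi^{*}\mathbf{e}_{\partial}(\partial_{+}M,\partial_{-}M,g)$. This is a transgression-type identity: differentiating under the integral sign in (\ref{boundary_component_Mathai_Quillen_form}) and using the homotopy formula for $h:Q\times[0,1]\to Q$, the de Rham differential of $\int_{0}^{1}\mathsf{inc}_{s}^{*}\iota_{\partial_{s}}h^{*}\psi\,ds$ produces precisely the difference of the two endpoint pullbacks $h_{1}^{*}\psi=\mathsf{id}^{*}\psi=j^{*}\psi$ and $h_{0}^{*}\psi=(\varsigma\circ\pi)^{*}\psi$, plus a term $\int_{0}^{1}\mathsf{inc}_{s}^{*}\iota_{\partial_{s}}h^{*}(d\psi)\,ds=\int_{0}^{1}\mathsf{inc}_{s}^{*}\iota_{\partial_{s}}h^{*}\pi^{*}\mathbf{e}(M,g)\,ds$; since $\pi\circ h_{s}$ is independent of $s$, the latter vanishes. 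Hence $d^{\partial}\psi_{\partial}=j^{*}\psi-(\varsigma\circ\pi)^{*}\psi(M,g)$, and it remains to identify $(\varsigma\circ\pi)^{*}\psi(M,g)$ with $\pi^{*}\mathbf{e}_{\partial}(\partial_{+}M,\partial_{-}M,g)$; this is exactly how $\mathbf{e}_{\partial}$ is constructed in the Appendix (Definition \ref{definitions_of_modified_e_partial_B_forms_on_bordisms}) from the Mathai--Quillen form pulled back along the adapted unit normal field $\varsigma$, so the identity holds by definition of the characteristic forms of Br\"uning--Ma adapted to $\varsigma$.

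For the flip identity (\ref{Mathai_Quillen_form_flip}), I would combine (\ref{Change_Mathai_Quillen_form_wrt_involution}), namely $\psi(M,g)=(-1)^{m}\xi^{*}\psi(M,g)$, with the observation that $\xi$ exchanges the roles of inward- and outward-pointing vectors over $\partial_{\pm}M$, hence carries the subset $Q$ for $\mathbb{M}$ onto the corresponding subset $Q'$ for $\mathbb{M}^{\prime}$. For the boundary component, the defining homotopy $h_{s}=s\cdot\mathsf{id}+(1-s)(\varsigma\circ\pi)$ for $\mathbb{M}$ is intertwined by $\xi$ with the homotopy built from $\varsigma'=-\varsigma$ used for $\mathbb{M}^{\prime}$, because $\xi$ is linear and commutes with $\pi$; thus $\xi^{*}\psi_{\partial}(\partial_{-}M,\partial_{+}M,g)=(-1)^{m}\psi_{\partial}(\partial_{+}M,\partial_{-}M,g)$ follows by pulling the identity (\ref{Change_Mathai_Quillen_form_wrt_involution}) through the integral, the contraction $\iota_{\partial_{s}}$ (which is unaffected since $\xi$ acts only on the fiber direction), and the pullback $\mathsf{inc}_{s}^{*}$. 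Assembling both components gives $\underline{\psi}(\mathbb{M},g)=(-1)^{m}\xi^{*}\underline{\psi}(\mathbb{M}^{\prime},g)$. This step is also where Lemma \ref{Lemma_duality_e_partial_on_bordisms} in the Appendix (the duality statement for $\mathbf{e}_{\partial}$) is the underlying mechanism.

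For the metric-dependence formula (\ref{Mathai_Quillen_form_metric_dependance}), the standard strategy is transgression in the metric parameter: given a smooth path $\{g_{t}\}_{t\in[0,1]}$ from $g_{0}$ to $g_{1}$, form the Mathai--Quillen forms $\psi(M,g_{t})$ and the corresponding relative forms, and integrate the $t$-derivative. Since $d\psi(M,g_{t})=\pi^{*}\mathbf{e}(M,g_{t})$ for all $t$, the family $t\mapsto\psi(M,g_{t})$ transgresses, modulo exact forms, to $\pi^{*}\widetilde{\mathbf{e}}(M,g_{0},g_{1})$ — this is essentially the definition of the Chern--Simons form $\widetilde{\mathbf{e}}$ recalled in the Appendix (Definition \ref{definition_of_Chern_Simons_secondary_classes_1}); similarly the boundary components transgress to $-\widetilde{\mathbf{e}}_{\partial}(\partial_{+}M,\partial_{-}M,g_{0},g_{1})$, matching the sign in Definition \ref{definition_secondary_relative_Euler_form}. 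Hence $\underline{\psi}(\mathbb{M},g_{1})-\underline{\psi}(\mathbb{M},g_{0})\equiv\pi^{*}\underline{\widetilde{\mathbf{e}}}(\mathbb{M},g_{0},g_{1})$ modulo $\mathbf{d}\bigl(\Omega^{m-2}(TM\setminus M,Q;\pi^{*}\Theta_{M})\bigr)$, with independence of the path following from Lemma \ref{basic_properties_of_chern_simon_forms}.

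The main obstacle I anticipate is the careful bookkeeping of signs and orientations in the boundary transgression of the first assertion — in particular tracking the sign $(-1)^{q}$ in the relative Leibniz rule (\ref{equation_Leibnit_rule_relative_forms}), the sign produced when $d^{\partial}$ is pulled past the fiberwise contraction $\iota_{\partial_{s}}$, and the normalization of $\psi_{\partial}$ so that $j^{*}\psi-d^{\partial}\psi_{\partial}$ matches $\pi^{*}\mathbf{e}_{\partial}$ rather than its negative. Everything else reduces to invoking the already-established properties (\ref{derivative_Quillen_form_equals_pull_bakc_Euler_form}), (\ref{Change_Mathai_Quillen_form_wrt_involution}), the Appendix constructions of $\mathbf{e}_{\partial}$, $\widetilde{\mathbf{e}}$, $\widetilde{\mathbf{e}}_{\partial}$, and Lemma \ref{basic_properties_of_chern_simon_forms}.
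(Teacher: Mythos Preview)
Your proposal is correct and follows essentially the same route as the paper: the homotopy/transgression computation for $d^{\partial}\psi_{\partial}$ leading to $j^{*}\psi-(\varsigma\circ\pi)^{*}\psi$ and the identification $\varsigma^{*}\psi(M,g)=\mathbf{e}_{\partial}$ (the paper cites \cite[Formula (2.10)]{Bruening-Ma} for this), the flip via (\ref{Change_Mathai_Quillen_form_wrt_involution}), and metric transgression on the deformation space for (\ref{Mathai_Quillen_form_metric_dependance}). The only minor differences are cosmetic: for the vanishing of the extra term in the first step the paper argues that $h^{*}\pi^{*}\mathbf{e}(M,g)=0$ by dimension (the map $\pi\circ h$ factors through $\partial M$), which is a hair stronger than your ``$\pi\circ h_{s}$ is $s$-independent'' observation but equivalent for the purpose; and for (\ref{Mathai_Quillen_form_metric_dependance}) the paper writes the exact correction terms explicitly via the Mathai--Quillen form on $I\times TM$ with the connection $\nabla^{\mathcal{TM}}$ and a double homotopy $\widetilde{h}_{s,t}$, whereas you leave them implicit---but the argument is the same.
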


 \begin{proof}
 First, from (\ref{derivative_Quillen_form_equals_pull_bakc_Euler_form}), it follows
 \begin{equation}
  \label{derivative_h_star_Mathai_Quillen_is_zero}
 dh^{*}\psi(M,g)=h^{*}d\psi(M,g)=h^{*}\pi^{*}\mathbf{e}(M,g)=0,
 \end{equation}
 where the last equality holds since $\pi^{*}\mathbf{e}(M,g)$ being a $m$-form, its pull-back by $h$ to the boundary must vanish for dimensional reasons. Then, by applying the exterior derivative to
 (\ref{boundary_component_Mathai_Quillen_form}), using its naturality with respect to pull-backs, the (Lie) derivative $\frac{d}{ds}=d\circ\iota_{\partial_{s}}-\iota_{\partial_{s}}\circ d$, formula (\ref{derivative_Quillen_form_equals_pull_bakc_Euler_form}), Stokes' Theorem and
  $\mathsf{inc}_{0}^{*}=\mathsf{inc}_{1}^{*}=\mathsf{id}$, we obtain
 \begin{equation}
  \label{derivative_boundary_component_Mathai_Quillen}
 d^{\partial}\psi_{\partial}(\partial_{+}M,\partial_{-}M,g)=i^{*}\psi(M,g)-\pi^{*}\varsigma^{*}\psi(M,g).
 \end{equation}
 Then, using
 $
 \varsigma^{*}\psi(M,g)=\mathbf{e}_{\partial}(\partial_{+}M,\partial_{-}M,g)
 $
 given in \cite[Formula (2.10)]{Bruening-Ma}, together with (\ref{derivative_Quillen_form_equals_pull_bakc_Euler_form}), (\ref{derivative_boundary_component_Mathai_Quillen}) and (\ref{equation_differential_in_cone}),
 the first claim follows. The behavior of the relative Mathai--Quillen form with respect to the involution $\xi$ follows immediately from (\ref{Change_Mathai_Quillen_form_wrt_involution}).

 We now prove (\ref{Mathai_Quillen_form_metric_dependance}). Consider the transgressed Euler form
  $\widetilde{\mathbf{e}}\left(M,g_{0},g_{1}\right)$
 from Definition \ref{definition_of_Chern_Simons_secondary_classes_1} in the Appendix. With (\ref{derivative_Quillen_form_equals_pull_bakc_Euler_form}), which in this case translates as $d\psi(I\times TM, \nabla^{\mathcal{TM}})=\pi^{*}\mathbf{e}(I\times TM,\nabla^{\mathcal{TM}})$, we obtain
 \begin{equation}
 \label{Transgression_inner_component}
 \psi(M,g_{1})-\psi(M,g_{0})=\pi^{*}\widetilde{\mathbf{e}}\left(M,g_{0},g_{1}\right)+d\int^{1}_{0}\mathsf{inc}_{t}^{*}\iota_{\partial t}\psi(I\times TM,\nabla^{\mathcal{TM}})dt
 \end{equation}
 Now, consider the homotopy 
 $$
 \widetilde{h}:I\times I\times\partial M\rightarrow Q\quad\text{ given by }\quad \widetilde{h}_{s,t}:=s\cdot\mathsf{id}+(1-s)(\widetilde{\varsigma}\circ\pi)
 $$
 and remark that
 $$
 \iota_{\partial_{s}}d\widetilde{h}^{*}\psi(I\times M,\nabla^{\mathcal{TM}})=\iota_{\partial_{s}}\widetilde{h}^{*}\pi^{*}\mathbf{e}(I\times M, \nabla^{\mathcal{TM}})=0
 $$
 Analogously, consider the Chern--Simons' form $\widetilde{\mathbf{e}}(\partial_{+}M,\partial_{-}M, g_{0},g_{1})$ from Definition \ref{definition_of_Chern_Simons_secondary_classes_1}. Then, we have
 \begin{equation}
 \label{Transgression_boundary_component}
 \begin{array}{l}
 \psi_{\partial}(\partial_{+}M,\partial_{-}M,g_{1})-\psi_{\partial}(\partial_{+}M,\partial_{-}M,g_{0})\\
 \hspace{1cm}=\int^{1}_{0}\mathsf{inc}_{t}^{*}\iota_{\partial_{t}}\psi(I\times M,\nabla^{\mathcal{TM}})dt-\pi^{*}\widetilde{\mathbf{e}}(\partial_{+}M,\partial_{-}M, g_{0},g_{1})\\
 \hspace{3cm}
 -d\int^{1}_{0}\int^{1}_{0}\mathsf{inc}_{s,t}^{*}\iota_{\partial_{s}}\iota_{\partial_{t}}\widetilde{h}^{*}\psi(I\times M,\nabla^{TM})ds dt.
 \end{array}
 \end{equation}
 Combining (\ref{Transgression_inner_component}) and (\ref{Transgression_boundary_component}), we obtain formula (\ref{Mathai_Quillen_form_metric_dependance}) expressing the dependance of the Mathai--Quillen form on the metric.
 \end{proof}

 Let $X$ be adapted to $\mathbb{M}$ as in Definition \ref{definition_adapted_vector_field_bordism}. Then, we have a smooth map of pairs $X:(M\backslash\mathcal{X})\rightarrow(TM\backslash M,Q)$, where $Q$ is as in Definition \ref{Relative_Mathai_Quillen_form} and hence
 $$
 X^{*}\underline{\psi}(\mathbb{M},g)\in\Omega^{m-1}(M\backslash\mathcal{X},\partial M;\Theta_{M}^{\C}).
 $$
 Moreover the integral $\int_{(M\backslash\mathcal{X})} X^{*}\underline{\psi}(\mathbb{M},g)\wedge \omega$ is absolute convergent for each $\omega\in\Omega^{1}_{c}(M\backslash\mathcal{X};\C)$
 vanishing on a neighborhood of $\mathcal{X}$. 
 \begin{lem}
  \label{Mathai_Quillen_form_and_Index}
  For every smooth function $f$ on $M$, being locally constant on a neighborhood of $\mathcal{X}$, we have
 $$
 (-1)^{m}\int_{(M\backslash\mathcal{X},\partial M)}X^{*}\underline{\psi}(\mathbb{M},g)\wedge df=\int_{(M,\partial M)}\underline{\mathbf{e}}(\mathbb{M},g)f-\sum_{_{x\in\mathcal{X}}}\mathsf{Ind}_{X}(x)f(x) $$
 \end{lem}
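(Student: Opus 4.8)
The plan is to reduce everything to an application of Stokes' theorem on the manifold with corners obtained by removing small balls around the zeros of $X$, carefully tracking the boundary contributions coming from $\partial M$, from the small spheres, and the interaction between the interior form $\psi(M,g)$ and its boundary component $\psi_\partial(\partial_+M,\partial_-M,g)$. First I would set $\mathcal{X}=\{x_1,\dots,x_k\}$ and for small $\delta>0$ form $M_\delta:=M\setminus\bigcup_j \mathbb{B}^m(\delta,x_j)$, a compact manifold whose boundary is $\partial M\sqcup\big(-\bigsqcup_j\partial\mathbb{B}^m(\delta,x_j)\big)$ (the spheres entering with reversed orientation, by the same sign convention as in the proof of Lemma \ref{Lemma_main_properties_of_S_in_particular_linearity_independance_and_how_it_acts_on_exact_forms}). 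The pulled-back relative form $X^*\underline{\psi}(\mathbb{M},g)=(X^*\psi(M,g),X^*\psi_\partial(\partial_+M,\partial_-M,g))$ lives in $\Omega^{m-1}(M\setminus\mathcal{X},\partial M;\Theta_M^\C)$, and by Lemma \ref{Mathai_Quillen_form_property_2} it satisfies $\mathbf{d}\,X^*\underline{\psi}(\mathbb{M},g)=X^*\pi^*\underline{\mathbf{e}}(\mathbb{M},g)=\underline{\mathbf{e}}(\mathbb{M},g)$ on $M\setminus\mathcal{X}$, since $\pi\circ X=\mathsf{id}$.

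Next I would apply the graded Leibniz rule \eqref{equation_Leibnit_rule_relative_forms} to write, on $M\setminus\mathcal{X}$,
$$
\mathbf{d}\big(X^*\underline{\psi}(\mathbb{M},g)\wedge f\big)=\underline{\mathbf{e}}(\mathbb{M},g)\,f+(-1)^{m-1}X^*\underline{\psi}(\mathbb{M},g)\wedge df,
$$
so that
$$
(-1)^{m}\int_{(M\setminus\mathcal{X},\partial M)}X^*\underline{\psi}(\mathbb{M},g)\wedge df
=\int_{(M\setminus\mathcal{X},\partial M)}\underline{\mathbf{e}}(\mathbb{M},g)\,f
-(-1)^{m}\int_{(M\setminus\mathcal{X},\partial M)}\mathbf{d}\big(X^*\underline{\psi}(\mathbb{M},g)\wedge f\big).
$$
The first integral on the right equals $\int_{(M,\partial M)}\underline{\mathbf{e}}(\mathbb{M},g)\,f$ because $\underline{\mathbf{e}}(\mathbb{M},g)$ is smooth on all of $M$ (the singularities live only in $X^*\psi$, not in the Euler form), exactly as in the step "$\int_{M\setminus\{x\}}d\alpha\wedge(f_2-f_1)=\int_M d\alpha\wedge(f_2-f_1)$" of Lemma \ref{Lemma_main_properties_of_S_in_particular_linearity_independance_and_how_it_acts_on_exact_forms}. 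For the second integral I would invoke the relative Stokes' theorem: for an exact relative form $\mathbf{d}\underline{\gamma}$ with $\underline{\gamma}$ of degree $m-1$ one has $\int_{(M_\delta,\partial M)}\mathbf{d}\underline{\gamma}=\int_{-\bigsqcup_j\partial\mathbb{B}^m(\delta,x_j)}$ (the value of the "interior component of $\underline{\gamma}$" restricted to the small spheres), since the $\partial M$-contribution is absorbed by the definition of the relative complex and the pairing \eqref{pairing}. Taking $\delta\to0$, each sphere term converges to $\lim_{\delta\to0}\int_{-\partial\mathbb{B}^m(\delta,x_j)} i^*\big(X^*\psi(M,g)\cdot f\big)=-f(x_j)\,\mathsf{deg}_{x_j}(X^*\psi(M,g))$, using that $f$ is locally constant near $\mathcal{X}$ so it pulls out of the limit, with the sign $(-1)^m$ from the Stokes convention cancelling against the $(-1)^m$ in front—precisely the bookkeeping done in the computation of $\Delta$ in the proof of Lemma \ref{Lemma_main_properties_of_S_in_particular_linearity_independance_and_how_it_acts_on_exact_forms}.

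The remaining point is to identify $\mathsf{deg}_{x_j}(X^*\psi(M,g))$ with the Hopf index $\mathsf{Ind}_X(x_j)$. This is the standard local computation: near a non-degenerate zero $x_j$ the map $X$ is, up to isotopy, a linear isomorphism of $T_{x_j}M$, $\psi(M,g)$ restricts on a small sphere in $TM\setminus M$ to a form of integral $\pm1$ representing the Thom class (this is exactly the defining property of the Mathai--Quillen form, cf.\ \eqref{derivative_Quillen_form_equals_pull_bakc_Euler_form} and \cite{Mathai-Quillen},\cite{Bismut-Zhang}), and pulling back under $X$ multiplies the integral by $\deg(X)=\mathsf{sign}\det(dX_{x_j})=\mathsf{Ind}_X(x_j)$. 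Assembling the three contributions then yields
$$
(-1)^{m}\int_{(M\setminus\mathcal{X},\partial M)}X^*\underline{\psi}(\mathbb{M},g)\wedge df=\int_{(M,\partial M)}\underline{\mathbf{e}}(\mathbb{M},g)\,f-\sum_{x\in\mathcal{X}}\mathsf{Ind}_X(x)\,f(x),
$$
as claimed. The main obstacle I anticipate is not any single step but the careful sign accounting: one must simultaneously respect the Stokes convention for the small spheres (which, as in the earlier lemmas, contributes $(-1)^m$), the sign $(-1)^{m-1}$ from the Leibniz rule \eqref{equation_Leibnit_rule_relative_forms}, and the sign in the definition \eqref{pairing} of the relative pairing; verifying that $\psi_\partial$ contributes correctly through the second slot of the relative form (rather than producing a spurious boundary term) is the place where Lemma \ref{Mathai_Quillen_form_property_2}, specifically formula \eqref{Mathai_Quillen_form_1}, does the essential work.
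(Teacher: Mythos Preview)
Your proposal is correct and follows essentially the same route as the paper: the paper's proof is a terse sketch instructing one to expand $\int_{(M\setminus\mathcal{X},\partial M)}\mathbf{d}(X^{*}\underline{\psi}(\mathbb{M},g)\wedge f)$ via the graded Leibniz rule \eqref{equation_Leibnit_rule_relative_forms}, the pairing \eqref{pairing}, the identity \eqref{Mathai_Quillen_form_1}, Stokes' theorem, and the local identity $\lim_{\delta\to0}\int_{-\partial\mathbb{B}^{m}(\delta,x)}X^{*}\psi(M,g)=\mathsf{Ind}_{X}(x)$, with the same sign conventions as in Lemma~\ref{Lemma_main_properties_of_S_in_particular_linearity_independance_and_how_it_acts_on_exact_forms}. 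Your write-up supplies exactly these ingredients with more detail (including the justification of the Hopf-index identification); the only thing to double-check is the intermediate sign in your displayed rearrangement, where the Leibniz rule gives a plain $-1$ rather than $-(-1)^{m}$ in front of $\int\mathbf{d}(X^{*}\underline{\psi}\wedge f)$, but as you note this is precisely the bookkeeping that gets reconciled once the Stokes contribution from the small spheres is inserted.
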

 \begin{proof}
 This follows from fully developping the integral 
 $$
 \int_{(M\backslash \mathcal{X}, \partial M)}\mathbf{d}(X^{*}\underline{\psi}(\mathbb{M},g)\wedge f)
 $$
 by using the graded Leibniz formula in (\ref{equation_Leibnit_rule_relative_forms}), the paring (\ref{pairing}), 
 the identity (\ref{Mathai_Quillen_form_1}), Stokes' Theorem and that
 $$
 \lim_{\delta\rightarrow 0}\int_{-\partial\mathbb{B}^{m}(\delta,x)}X^{*}\psi(M,g)=\mathsf{Ind}_{X}(x)
 $$
 for every zero $x\in\mathcal{X}$. The signs conventions when using Stokes' Theorem are taken with the standard convention as in the proof of Lemma \ref{Lemma_main_properties_of_S_in_particular_linearity_independance_and_how_it_acts_on_exact_forms}. 
 \end{proof}

The following Lemma gives a relative version of the Hopf's formula at the same time.
\begin{lem}
 \label{Theorem_Chern_Gauss_Bonnet_Bordism}
 For the bordism $\mathbb{M}$ consider an adapted vector field $X$ as in Definition \ref{definition_adapted_vector_field_bordism}.
 Let $\chi(M,\partial_{-}M)$ be
 the Euler characteristic relative to $\partial_{-}M$. Then, with $\underline{\mathbf{e}}(\mathbb{M},g)$
 the relative Euler form given in Definition \ref{definition_relative_Euler_form} and the pairing in (\ref{pairing}), we have 
 $$
 \begin{array}{c}
\chi(M,\partial_{-}M) = \int_{(M,\partial M)}\underline{\mathbf{e}}(\mathbb{M},g)=\sum_{x\in\mathcal{X}}\mathsf{Ind}_{X}(x)
 \end{array}
 $$
\end{lem}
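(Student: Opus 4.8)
The plan is to obtain both equalities from results already established, so that essentially no new computation is needed: the statement is really a repackaging of the Gauss--Bonnet--Chern formula and of the Poincar\'e--Hopf index theorem in the relative setting.

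First I would prove the middle equality $\int_{(M,\partial M)}\underline{\mathbf{e}}(\mathbb{M},g)=\sum_{x\in\mathcal{X}}\mathsf{Ind}_{X}(x)$ by applying Lemma \ref{Mathai_Quillen_form_and_Index} to the constant function $f\equiv 1$ on $M$. Such an $f$ is (trivially) locally constant on a neighbourhood of $\mathcal{X}=X^{-1}(0)$, so the lemma applies; since $df=0$ its left-hand side vanishes, while, as $f(x)=1$ for every zero $x\in\mathcal{X}$, its right-hand side becomes $\int_{(M,\partial M)}\underline{\mathbf{e}}(\mathbb{M},g)-\sum_{x\in\mathcal{X}}\mathsf{Ind}_{X}(x)$, and the claimed identity drops out. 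It is worth recording, as a by-product, that the number $\sum_{x\in\mathcal{X}}\mathsf{Ind}_{X}(x)$ then depends neither on the adapted vector field $X$ --- which is in any case immediate from the relation $\partial\widetilde{\mathbf{e}}(X_{0},X_{1})=\mathbf{e}(X_{1})-\mathbf{e}(X_{0})$ in (\ref{defining_relation_Chern_Simons_vectorfield}), since homologous $0$-chains in a connected $M$ have the same total coefficient --- nor, by (\ref{equation_relations_chern_simons_relative_euler_forms_1}), on the metric $g$.

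Second, to get the remaining equality it suffices to identify this common value with the relative Euler characteristic $\chi(M,\partial_{-}M)$. This is exactly the Poincar\'e--Hopf index theorem for a vector field adapted to a bordism in the sense of Definition \ref{definition_adapted_vector_field_bordism}, i.e.\ inward pointing along $\partial_{+}M$ and outward pointing along $\partial_{-}M$: it is the very statement already invoked (see \cite[Lemma 1.2 and Proposition 1.1]{Benedetti-Petronio}) at the beginning of Section \ref{Section_Euler_structure_and_Co-Euler_Structures} to produce the Euler chain with $\partial c=\mathbf{e}(X)$. One may also check it directly, using the $X$-independence just noted, by taking $X$ to be the gradient of a Morse function on $M$ with no critical points on $\partial M$ and with the appropriate behaviour on $\partial_{\pm}M$: the associated Morse complex computes the relevant relative homology and the signed count of zeros is the relative Euler characteristic.

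I expect the only real obstacle to be sign-bookkeeping: one has to check that the ``outward normal first'' identification of $\Theta_{M}|_{\partial M}$ with $\Theta_{\partial M}$ fixed in Section \ref{section_background}, the signs in the differential $\mathbf{d}$ of the relative complex in (\ref{equation_differential_in_cone}) and in the pairing (\ref{pairing}), the normalization of the relative Mathai--Quillen form entering Lemma \ref{Mathai_Quillen_form_and_Index}, and the orientation convention in the cited Poincar\'e--Hopf statement are all mutually consistent, so that the two halves of the identity indeed glue to $\chi(M,\partial_{-}M)$; granted this, the argument above is complete.
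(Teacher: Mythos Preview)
Your proof is correct and, for the second equality, identical to the paper's: both apply Lemma~\ref{Mathai_Quillen_form_and_Index} with $f\equiv 1$.

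For the remaining equality the routes differ slightly. The paper obtains $\chi(M,\partial_{-}M)=\int_{(M,\partial M)}\underline{\mathbf{e}}(\mathbb{M},g)$ directly, by recognising it as a restatement of the Chern--Gauss--Bonnet formula of Br\"uning and Ma \cite[Theorem~3.4]{Bruening-Ma2} in terms of the relative Euler form. You instead establish $\chi(M,\partial_{-}M)=\sum_{x\in\mathcal{X}}\mathsf{Ind}_{X}(x)$ via the relative Poincar\'e--Hopf theorem (as in \cite{Benedetti-Petronio}), and then reach the integral by transitivity through the second equality. Both arguments are just appeals to known results; yours has the mild advantage of not invoking the full Br\"uning--Ma machinery a second time, while the paper's is more direct in that it identifies each equality with a single classical statement. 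Your remark on sign-bookkeeping is apt but does not hide any genuine difficulty here.
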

\begin{proof}
 The first equality is a restatement of the (Chern--Gauss--Bonnet) formula in \cite[Theorem 3.4]{Bruening-Ma2}, see also \cite[Theorem 6.1.14]{Maldonado_thesis}, in terms of the relative form from Definition \ref{definition_relative_Euler_form}. The second equality, directly follows from Lemma \ref{Mathai_Quillen_form_and_Index} using the function $f=1$.
\end{proof}

\subsection{Poincar\'e duality}
 For simplicity, assume $\chi(M,\partial_{-}M)=0$. Consider the spaces of Euler and co-Euler structures on $\mathbb{M}$. The following
 generalizes Proposition 5 in \cite{Burghelea-Haller06b}
 \begin{thm}
 There is a natural isomorphism of affine spaces
 \begin{equation}
  \label{Poincare_Duality_pairing}
  \mathsf{P}:\mathbf{\mathfrak{Eul}}(\mathbb{M};\C)^{*}\rightarrow  \mathbf{\mathfrak{Eul}}(\mathbb{M};\C),
 \end{equation}
 which intertwines the flip map $\nu^{*}$ with $\nu$ and is affine over the Poincar\'e--Lefschetz duality
 \begin{equation}
  \label{Poincare_Lefchtez_Duality_EulerStructures}
 \mathsf{PD}:H^{m-1}(M,\partial M;\Theta_{M}^{\C}) \rightarrow H_{1}(M;\C).
 \end{equation}
 In other words, for every $\underline{\beta}\in H^{m-1}(M,\partial M;\Theta_{M}^{\C})$ and every co-Euler structure $\mathfrak{e}^{*}\in\mathfrak{Eul}(\mathbb{M};\C)$
 we have
 \begin{equation}
  \label{Poincare_Duality_Affine_property_of_P}
  \mathsf{P}(\mathfrak{e}^{*}+\underline{\beta})=\mathsf{P}(\mathfrak{e}^{*})+\mathsf{PD}(\underline{\beta}).
\end{equation}

 \end{thm}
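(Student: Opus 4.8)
The plan is to define $\mathsf{P}$ by transporting a representative $(g,\underline{\alpha})$ of a co-Euler structure, via the relative Mathai--Quillen form $\underline{\psi}(\mathbb{M},g)$ of Definition \ref{Relative_Mathai_Quillen_form}, to an Euler chain; and then to check that the result is independent of all choices, affine over $\mathsf{PD}$, bijective, and compatible with the flip maps. Since $\chi(M,\partial_{-}M)=0$, neither space requires a base point, so $\underline{\alpha}$ may be taken smooth on all of $M$.

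\emph{Construction of $\mathsf{P}$.} Let $\mathfrak{e}^{*}$ be represented by $(g,\underline{\alpha})$ with $\underline{\alpha}\in\Omega^{m-1}(M,\partial M;\Theta_{M}^{\C})$, and let $X$ be adapted to $\mathbb{M}$ (Definition \ref{definition_adapted_vector_field_bordism}) with zero set $\mathcal{X}$. By formula (\ref{Mathai_Quillen_form_1}) of Lemma \ref{Mathai_Quillen_form_property_2} and $\pi\circ X=\mathsf{id}$, the relative form
\[
\underline{\gamma}_{X}\;:=\;\underline{\alpha}-X^{*}\underline{\psi}(\mathbb{M},g)\;\in\;\Omega^{m-1}(M\backslash\mathcal{X},\partial M;\Theta_{M}^{\C})
\]
satisfies $\mathbf{d}\underline{\gamma}_{X}=\underline{\mathbf{e}}(\mathbb{M},g)-\underline{\mathbf{e}}(\mathbb{M},g)=0$, i.e. it is a closed relative form whose singular locus is the finite set $\mathcal{X}$. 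The regularization $\mathcal{S}$ of Lemma \ref{Lemma_main_properties_of_S_in_particular_linearity_independance_and_how_it_acts_on_exact_forms} applies verbatim when the single interior singularity is replaced by $\mathcal{X}$, so $\omega\mapsto(-1)^{m}\mathcal{S}(X^{*}\underline{\psi}(\mathbb{M},g)-\underline{\alpha},\omega)$ is a well-defined $\C$-linear functional on closed $1$-forms $\omega\in\Omega^{1}(M)$; by assertion (3) of that Lemma together with Lemma \ref{Mathai_Quillen_form_and_Index} it takes the value $\sum_{x\in\mathcal{X}}\mathsf{Ind}_{X}(x)h(x)=\int_{\mathbf{e}(X)}h$ on $\omega=dh$. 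Hence there is a smooth singular $1$-chain $c$, unique modulo $\partial C_{2}^{\text{sing}}(M;\C)$, with $\partial c=\mathbf{e}(X)$ and
\[
\int_{c}\omega\;=\;(-1)^{m}\,\mathcal{S}\bigl(X^{*}\underline{\psi}(\mathbb{M},g)-\underline{\alpha},\,\omega\bigr)\qquad\text{for all closed }\omega\in\Omega^{1}(M),
\]
and we set $\mathsf{P}(\mathfrak{e}^{*}):=[X,c]$.

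\emph{Independence of the choices.} If $(g_{0},\underline{\alpha}_{0})\sim^{cs}(g_{1},\underline{\alpha}_{1})$ then $\underline{\alpha}_{1}-\underline{\alpha}_{0}=\underline{\widetilde{\mathbf{e}}}(\mathbb{M},g_{0},g_{1})$ modulo $\mathbf{d}\Omega^{m-2}(M,\partial M;\Theta_{M}^{\C})$, while (\ref{Mathai_Quillen_form_metric_dependance}) gives $X^{*}\underline{\psi}(\mathbb{M},g_{1})-X^{*}\underline{\psi}(\mathbb{M},g_{0})=\underline{\widetilde{\mathbf{e}}}(\mathbb{M},g_{0},g_{1})$ modulo exact relative forms; so $\underline{\gamma}_{X}$ changes only by a relative coboundary, which contributes nothing to $\mathcal{S}$ by assertion (1) of Lemma \ref{Lemma_main_properties_of_S_in_particular_linearity_independance_and_how_it_acts_on_exact_forms}, and $[X,c]$ is unchanged. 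Independence of the adapted vector field is the substantial step: given $X_{0},X_{1}$ adapted to $\mathbb{M}$, join them by a generic homotopy $\widetilde{X}$ with canonically oriented zero set $\widetilde{\mathcal{X}}\subset(M\backslash\partial M)\times I$ as in Section \ref{Section_Euler_structure_and_Co-Euler_Structures}. Pulling $\underline{\psi}(\mathbb{M},g)$ back along $\widetilde{X}$ and transgressing over $M\times I$ exactly as in the proof of Lemma \ref{Mathai_Quillen_form_property_2} (using (\ref{Mathai_Quillen_form_1}) and the vanishing of a top-degree Euler form pulled back along a homotopy), and regularizing near the one-dimensional locus $\widetilde{\mathcal{X}}$, one finds that $X_{1}^{*}\underline{\psi}(\mathbb{M},g)-X_{0}^{*}\underline{\psi}(\mathbb{M},g)$ differs, as a functional on closed $1$-forms $\omega$ through $\mathcal{S}$, from $\int_{\widetilde{\mathcal{X}}}p_{M}^{*}\omega=\int_{\widetilde{\mathbf{e}}(X_{0},X_{1})}\omega$ of (\ref{integration_over_chern-simons_cochain}) only by the pairing of an exact relative form, hence by nothing. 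Therefore the chains $c_{0},c_{1}$ produced from $X_{0},X_{1}$ satisfy $c_{1}-c_{0}=\widetilde{\mathbf{e}}(X_{0},X_{1})$ modulo $\partial C_{2}^{\text{sing}}(M;\C)$, i.e. $[X_{0},c_{0}]=[X_{1},c_{1}]$ by (\ref{defining_relation_one_chain}). Keeping the boundary component $\psi_{\partial}$ of $\underline{\psi}$ and the orientations along $\widetilde{\mathcal{X}}$ straight through this transgression is the main obstacle; the rest is routine.

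\emph{Affineness over $\mathsf{PD}$, bijectivity, and the flip.} By the definition of the $H^{m-1}(M,\partial M;\Theta_{M}^{\C})$-action $\Upsilon^{*}$ on co-Euler structures (Lemma \ref{lemma_action_is_well_defined_free_transitive_coeuleur_Structures_absolute}), passing from $\mathfrak{e}^{*}$ to $\mathfrak{e}^{*}+\underline{\beta}$ replaces $\underline{\alpha}$ by $\underline{\alpha}-\underline{\beta}$, hence replaces $\int_{c}\omega$ by $\int_{c}\omega+(-1)^{m}\mathcal{S}(\underline{\beta},\omega)=\int_{c}\omega+(-1)^{m}\int_{(M,\partial M)}\underline{\beta}\wedge\omega$ (assertion (1) of Lemma \ref{Lemma_main_properties_of_S_in_particular_linearity_independance_and_how_it_acts_on_exact_forms}, $\underline{\beta}$ being non-singular). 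Poincar\'e--Lefschetz duality $\mathsf{PD}$ of (\ref{Poincare_Lefchtez_Duality_EulerStructures}) is, up to the orientation sign $(-1)^{m}$, precisely the isomorphism characterised through the non-degenerate pairing (\ref{pairing_in_cohomology}) by $\langle\mathsf{PD}(\underline{\beta}),[\omega]\rangle=(-1)^{m}\int_{(M,\partial M)}\underline{\beta}\wedge\omega$; so this change of chain is exactly the action of $\mathsf{PD}(\underline{\beta})\in H_{1}(M;\C)$ on $[X,c]$, which is (\ref{Poincare_Duality_Affine_property_of_P}). As $\mathfrak{Eul}^{*}(\mathbb{M};\C)$ and $\mathfrak{Eul}(\mathbb{M};\C)$ are affine spaces over $H^{m-1}(M,\partial M;\Theta_{M}^{\C})$ and $H_{1}(M;\C)$ respectively (Lemma \ref{lemma_action_is_well_defined_free_transitive_coeuleur_Structures_absolute} and the discussion after Definition \ref{def_space_of_Euler_structures}) and $\mathsf{PD}$ is an isomorphism, the affine map $\mathsf{P}$ is automatically bijective. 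Finally, if $X$ is adapted to $\mathbb{M}$ then $-X=\xi\circ X$ is adapted to $\mathbb{M}^{\prime}$, and (\ref{Mathai_Quillen_form_flip}) gives $(-X)^{*}\underline{\psi}(\mathbb{M}^{\prime},g)=X^{*}\xi^{*}\underline{\psi}(\mathbb{M}^{\prime},g)=(-1)^{m}X^{*}\underline{\psi}(\mathbb{M},g)$; since $\nu^{*}\mathfrak{e}^{*}$ is represented by $(g,(-1)^{m}\underline{\alpha})$, its associated closed relative form on $\mathbb{M}^{\prime}$ with respect to $-X$ is $(-1)^{m}\underline{\gamma}_{X}$, so the defining relation together with $\partial c^{\prime}=\mathbf{e}(-X)=(-1)^{m}\mathbf{e}(X)$ and (\ref{involution_effect_on_e_and_e_tilde}) forces $c^{\prime}=(-1)^{m}c$ modulo $\partial C_{2}^{\text{sing}}(M;\C)$. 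Hence $\mathsf{P}(\nu^{*}\mathfrak{e}^{*})=[-X,(-1)^{m}c]=\nu(\mathsf{P}(\mathfrak{e}^{*}))$ by (\ref{action_Euler_structures_affine_homology}), i.e. $\mathsf{P}$ intertwines $\nu^{*}$ with $\nu$.
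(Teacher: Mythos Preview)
Your proof is correct and follows essentially the same approach as the paper's: both construct $\mathsf{P}$ by forming the closed relative form $X^{*}\underline{\psi}(\mathbb{M},g)-\underline{\alpha}$ on $M\backslash\mathcal{X}$, dualizing it to a $1$-chain $c$, verifying $\partial c=\mathbf{e}(X)$ via Lemma~\ref{Mathai_Quillen_form_and_Index}, and then checking independence of choices, affineness over $\mathsf{PD}$, bijectivity, and compatibility with the flip maps. The only real difference is that you phrase the dualization through the regularized functional $\mathcal{S}$ of Lemma~\ref{Lemma_main_properties_of_S_in_particular_linearity_independance_and_how_it_acts_on_exact_forms} (extended to the finite set $\mathcal{X}$), whereas the paper restricts to closed $1$-forms compactly supported on $M\backslash\mathcal{X}$ and invokes Poincar\'e--Lefschetz duality for the pair $(M\backslash\mathcal{X},\partial M)$ directly; these are equivalent, and your version makes the Euler-chain condition and the sign bookkeeping (the $(-1)^{m}$ in your normalization of $\mathsf{PD}$) slightly more explicit, while the paper is terser and defers the independence-of-$X$ step to the closed-manifold argument in \cite{Burghelea-Haller06b}.
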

 \begin{proof}
 Let $(g,\underline{\alpha})$ be a pair representing the co-Euler structure $\mathfrak{e}^{*}$ and $\underline{\psi}(\mathbb{M},g)$ the relative Mathai--Quillen form from Definition \ref{Relative_Mathai_Quillen_form}.
 Choose a vector field $X$ which is transverse to the zero section, inward pointing along $\partial_{+}M$ and
 outward pointing along $\partial_{-}M$ and with set of isolated singularities $\mathcal{X}$ in the interior of $M$.
 Since $\mathbf{d}\underline{\alpha}=\underline{\mathbf{e}}(\mathbb{M},g)$ and Lemma \ref{Mathai_Quillen_form_property_2}, the relative form $X^{*}\underline{\psi}(\mathbb{M},g)-\underline{\alpha}$ is closed
 and therefore defines a relative cohomology class in $H^{m-1}(M\backslash\mathcal{X},\partial M;\Theta_{M})$. Now, we identify the relative cohomology class
 $[X^{*}\underline{\psi}(\mathbb{M},g)-\underline{\alpha}]$ to its dual Poincar\'e--Lefschetz class $[c]\in H_{1}(M, \mathcal{X};\C)$ represented by a singular 1-chain $c$, by the requirement
 \begin{equation}
  \label{Poincare_Duality_pairing_Co_Euler_Euler_structures}
  \int_{(M\backslash\mathcal{X},\partial M)} \left(X^{*} \underline{\psi}(\mathbb{M},g)-\underline{\alpha}\right)\wedge\omega=\int_{c}\omega
  \end{equation}
 to hold for all closed 1-forms $\omega\in\Omega^{1}(M;\C)$ compactly supported on $M\backslash\mathcal{X}$.
 Moreover, it is possible to choose a singular $1$-chain $c$ which is an Euler chain, i.e. $\partial c=\mathbf{e}(X)$ with $\mathbf{e}(X)$ is the $0$-chain from (\ref{relative_Euler_chain}). 
 Indeed, in the case $\chi(M,\partial_{-}M)=0$, this follows by setting $\omega = df$ for an arbitrary smooth function $f$, developping the left hand side of the identity in (\ref{Poincare_Duality_pairing_Co_Euler_Euler_structures}) with
 (Gauss--Bonnet Theorem in) Lemma \ref{Theorem_Chern_Gauss_Bonnet_Bordism} and using Stokes' Theorem on the right hand side of the identity in (\ref{Poincare_Duality_pairing_Co_Euler_Euler_structures})

 The assignment
 $
   P:(g,\underline{\alpha})\mapsto (X,c)
 $
 specified by the condition (\ref{Poincare_Duality_pairing_Co_Euler_Euler_structures})
 induces the map (\ref{Poincare_Duality_pairing}). This follows from 
 (\ref{defining_relation_one_chain}), formula (\ref{integration_over_chern-simons_cochain}), Lemma \ref{Mathai_Quillen_form_property_2} and Proposition \ref{Proposition_regularization_function_S_both_boundaries}, and
 using the same strategy as that in the situation of closed manifolds, see
 \cite[Lemma 2 and (19)]{Burghelea-Haller06b}. That is, $P$ does depend on neither representative of Euler structure, co-Euler structure and cohomology clasess in $H^{1}(M;\C)$ and one obtains the pairing
 $$
 \mathbb{T}:\mathfrak{Eul}^{*}(\mathbb{M};\C)\times\mathfrak{Eul}(\mathbb{M};\C)\rightarrow H_{1}(M;\C),
 $$
 with the property 
 \begin{equation}
  \label{pairing_coEuler-Euler_homology}
 \mathbb{T}(\mathfrak{e}^{*}+\underline{\beta},\mathfrak{e}+\sigma)=\mathbb{T}(\mathfrak{e}^{*},\mathfrak{e}) - \sigma + \mathsf{PD}(\underline{\beta})
 \end{equation}
 for every $\mathfrak{e}^{*}\in\mathfrak{Eul}^{*}(\mathbb{M};\C)$, $\mathfrak{e}\in\mathfrak{Eul}(\mathbb{M};\C)$, $\sigma\in H_{1}(M;\C)$ and relative form $\underline{\beta}\in H^{m-1}(M,\partial M;\Theta_{M}^{\C})$.
 Using (\ref{pairing_coEuler-Euler_homology}) and that $\mathfrak{Eul}^{*}(\mathbb{M};\C)$ and $\mathfrak{Eul}(\mathbb{M};\C)$ are affine spaces over relative cohomology and homology groups respectively, one obtains that $\mathsf{P}$ is affine over the homomorphism $\mathsf{PD}$ expressing the Poincar\'e--Lefschetz duality in (\ref{Poincare_Lefchtez_Duality_EulerStructures}),
 and hence formula (\ref{Poincare_Duality_Affine_property_of_P}) holds.
 Since $\mathsf{PD}$ is an isomorphism, $\mathsf{P}$ is so. 
 Finally because of the properties of the relative Mathai--Quillen form and definition of the involution 
 $\nu$, it is clear that $\mathsf{P}$ intertwines the flip maps $\nu^{*}$ and $\nu$ on the spaces of co-Euler and Euler structures respectively.

 \end{proof}

\section{Co-Euler structures and the complex-valued analytic torsion}
\label{Section_Burghelea--Haller analytic torsion on bordisms}
\index{generalized complex-valued analytic torsion}
In this section, we extend \cite[Theorem 4.2]{Burghelea-Haller} to the situation of a  bordism $\mathbb{M}$.
We refer the reader to \cite{Maldonado} for details, since we use the definitions, notation and results therein.

Let $E$ be a complex flat vector bundle over $M$.
Assume $E$ is endowed with a fiber-wise non-degenerate symmetric bilinear form $b$. Consider
the bilinear Laplacian $\Delta_{E,g,b}:=d_{E}d_{E,g,b}^{\sharp}+d_{E,g,b}^{\sharp}d_{E}$ acting on smooth $E$-valued forms satisfying absolute boundary conditions on $\partial_{+}M$ and
relative boundary conditions on $\partial_{-}M$, see \cite{Maldonado}.

Consider $[\tau(0)]^{E,g,b}_{\mathbb{M}}$ the bilinear form induced in the determinant line $\det H^{*}\left(M,\partial_{-}M;E\right)$,
by the restriction of $b$ to $0$-generalized eigenspace of $\Delta_{E,g,b}$, with the use of a Hodge--de-Rham theorem and the Knudson--Munford isomorphism,
see \cite{Knudson-Mumford} and \cite{Maldonado}.
Then,
the \textit{complex-valued Ray--Singer torsion} \index{complex-valued Ray--Singer torsion} is the bilinear form on $\det H(M,\partial_{-}M;E)$ defined by

\begin{equation}
 \label{Definition_complex_valued_Ray_singer_analytic_torsion}
 [\tau^{\mathsf{RS}}]^{E,g,b}_{\mathbb{M}}:=[\tau(0)]^{E,g,b}_{\mathbb{M}}\cdot\prod_{q}\left({\det}^{\prime}\left(\Delta_{E,g,b,q}\right)\right)^{(-1)^{q}q},
\end{equation}
where ${\det}^{\prime}\left(\Delta_{E,g,b,q}\right)$  is the $\zeta$-regularized determinant of
$\Delta_{E,g,b,q}$ defined as
$$
{\det}^{\prime}\left(\Delta_{\mathcal{B},q}\right):=
\exp(-\left.\frac{\partial}{\partial s}\right|_{s=0}\Tr((\Delta_{E,g,b,q}^{\mathsf{c}})^{-s}))
$$
with
$$
\Delta_{E,g,b,q}^{\mathsf{c}}:=\left.\Delta_{E,g,b,q}\right|_{\left.{\Omega^{q}_{\Delta_{E,g,b,q}}(M;E)(0)^{\mathsf{c}}}\right.|_{\mathcal{B}}}
$$
being the restriction of $\Delta_{E,g,b}$
to the space
of smooth differential forms of degree $q$
which are not in $0$-generalized eigenspace of $\Delta_{E,g,b}$ but satisfy the boundary conditions above.

The \textit{generalized} complex-valued Ray--Singer torsion on closed manifolds was constructed in
in \cite[Theorem 4.2]{Burghelea-Haller}, by
adding appropriate
correction terms to the complex-valued torsion in order to
cancel out the infinitesimal variation to the complex-valued analytic torsion. These correction terms were introduced using co-Euler structures,
once the anomaly formulas for the torsion were computed.
The procedure in the situation on a compact bordism is carried out in a similar fashion. In fact,
the required correction terms are constructed by using this time co-Euler structures on compact bordisms, see Section \ref{section_Co_Euler_structures}, and the anomaly formulas
in \cite[Theorem 3]{Maldonado}.

 \begin{thm}
 \label{definition_Burghele_Haller_torsion}
Let $\mathbb{M}$ be a bordism with Riemannian metric $g$. Assume $\chi(M,\partial_{-}M)=0$. Let
$\mathfrak{e}^{*}\in\mathbf{\mathfrak{Eul}}^{*}(\mathbb{M};\C)$
a the co-Euler structure (without base point), see Section \ref{Section_co-Euler Structures without base point}.
Let $E$ be a complex flat vector bundle over $M$, with flat connection $\nabla^{E}$. Assume
$E$ is endowed with a complex non-degenerate symmetric
bilinear form $b$.
Then,
\begin{equation}
 \label{equation_Burghelea_Haller_torsion_both_boundaries_chi_nul}
\begin{array}{c}
[\tau]^{E,\mathfrak{e}^{*},[b]}_{\mathbb{M}}:=[\tau^{\mathsf{RS}}]^{E,g,b}_{\mathbb{M}}\cdot
 e^{\left({2\int_{(M,\partial M)}\underline{\alpha}\wedge\omega(E,b)-
 \mathsf{rank}(E)\int_{\partial M}B(\partial_{+}M,\partial_{-}M,g)}\right)}\\
\end{array}
\end{equation}
where
\begin{itemize}
 \item $[\tau^{\mathsf{RS}}]^{E,g,b}_{\mathbb{M}}$ is the torsion
on $\mathbb{M}$ in (\ref{Definition_complex_valued_Ray_singer_analytic_torsion}),
 \item  $(g,\underline{\alpha})$ is a representative of the co-Euler structure
$\mathfrak{e}^{*}$,
 \item $B(\partial_{+}M,\partial_{-}M,g)$ is the characteristic form from Definition \ref{definitions_of_modified_e_partial_B_forms_on_bordisms},
 \item $[b]$ indicates the homotopy class of $b$,
 \item $\omega(E,b)$ is the Kamber--Tondeur form for $\nabla^{E}$ and $b$ in (\ref{Kamber-Tondeur_form}),
\end{itemize}
is well defined as bilinear form
on $\det(H(M,\partial_{-}M))$
\end{thm}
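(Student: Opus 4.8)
The plan is to show that the scalar correction factor
\[
\mathcal{C}(g,b,\underline{\alpha}):=\exp\!\left(2\int_{(M,\partial M)}\underline{\alpha}\wedge\omega(\nabla^{E},b)-\mathsf{rank}(E)\int_{\partial M}B(\partial_{+}M,\partial_{-}M,g)\right)
\]
attached to a representative $(g,\underline{\alpha})$ of $\mathfrak{e}^{*}$ and to $b$ compensates exactly the dependence of $[\tau^{\mathsf{RS}}]^{E,g,b}_{\mathbb{M}}$ on the Riemannian metric and on the bilinear form, so that the product $[\tau]^{E,\mathfrak{e}^{*},[b]}_{\mathbb{M}}=[\tau^{\mathsf{RS}}]^{E,g,b}_{\mathbb{M}}\cdot\mathcal{C}(g,b,\underline{\alpha})$ depends only on $(E,\nabla^{E})$, on the homotopy class $[b]$, and on $\mathfrak{e}^{*}$. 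Since the determinant line $\det H(M,\partial_{-}M;E)$ is a topological object (independent of $g$ and $b$) on which $[\tau^{\mathsf{RS}}]^{E,g,b}_{\mathbb{M}}$ is a bilinear form, see \cite{Maldonado}, and $\mathcal{C}(g,b,\underline{\alpha})$ is a non-zero complex number, it is enough to prove two things: (i) for fixed $g$, the value $\mathcal{C}(g,b,\underline{\alpha})$ does not change if $\underline{\alpha}$ is replaced by another representative of $\mathfrak{e}^{*}$; and (ii) the ratio $[\tau^{\mathsf{RS}}]^{E,g_{1},b_{1}}_{\mathbb{M}}/[\tau^{\mathsf{RS}}]^{E,g_{0},b_{0}}_{\mathbb{M}}$, which is a scalar, equals $\mathcal{C}(g_{0},b_{0},\underline{\alpha}_{0})/\mathcal{C}(g_{1},b_{1},\underline{\alpha}_{1})$ whenever $(g_{0},\underline{\alpha}_{0})$ and $(g_{1},\underline{\alpha}_{1})$ represent $\mathfrak{e}^{*}$ and $b_{0},b_{1}$ lie in the same homotopy class.

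For (i): if $(g,\underline{\alpha})$ and $(g,\underline{\alpha}')$ represent $\mathfrak{e}^{*}$ with the same metric, then by the definition of $\sim^{cs}$ we have $\underline{\alpha}'-\underline{\alpha}=\underline{\widetilde{\mathbf{e}}}(\mathbb{M},g,g)$ modulo exact relative forms, and $\underline{\widetilde{\mathbf{e}}}(\mathbb{M},g,g)=0$, so $\underline{\alpha}'-\underline{\alpha}=\mathbf{d}\underline{\gamma}$ for some $\underline{\gamma}\in\Omega^{m-2}(M,\partial M;\Theta_{M}^{\C})$. The Kamber--Tondeur form $\omega(\nabla^{E},b)$ is closed because $\nabla^{E}$ is flat, see \cite{Burghelea-Haller}; hence, by the graded Leibniz rule (\ref{equation_Leibnit_rule_relative_forms}) and the relative Stokes' theorem --- equivalently, assertion (1) of Lemma \ref{Lemma_main_properties_of_S_in_particular_linearity_independance_and_how_it_acts_on_exact_forms} read in the base-point-free situation --- we get $\int_{(M,\partial M)}\mathbf{d}\underline{\gamma}\wedge\omega(\nabla^{E},b)=0$, so $\mathcal{C}(g,b,\underline{\alpha}')=\mathcal{C}(g,b,\underline{\alpha})$.

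For (ii): the Riemannian metrics form a convex set and the non-degenerate symmetric bilinear forms homotopic to $b$ form a path-connected set, so choose a smooth path $(g_{u},b_{u})_{u\in[0,1]}$ from $(g_{0},b_{0})$ to $(g_{1},b_{1})$ and set $\underline{\alpha}_{u}:=\underline{\alpha}_{0}+\underline{\widetilde{\mathbf{e}}}(\mathbb{M},g_{0},g_{u})$; by Lemma \ref{basic_properties_of_chern_simon_forms} this satisfies $\mathbf{d}\underline{\alpha}_{u}=\underline{\mathbf{e}}(\mathbb{M},g_{u})$ and $[g_{u},\underline{\alpha}_{u}]=\mathfrak{e}^{*}$ for all $u$. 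I would then differentiate in $u$. On the analytic side, the anomaly formula \cite[Theorem 3]{Maldonado} expresses $\frac{\partial}{\partial u}\log\!\left([\tau^{\mathsf{RS}}]^{E,g_{u},b_{u}}_{\mathbb{M}}/[\tau^{\mathsf{RS}}]^{E,g_{0},b_{0}}_{\mathbb{M}}\right)$ as a local quantity built from $\underline{\mathbf{e}}(\mathbb{M},g_{u})$, from the infinitesimal transgression of the Euler form, from the boundary form $B(\partial_{+}M,\partial_{-}M,g_{u})$, from $\omega(\nabla^{E},b_{u})$, and from $\Tr(b_{u}^{-1}\dot{b}_{u})$. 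On the correction side, formula (\ref{equation_variation_formula_with_out_base_point_1}) of Proposition \ref{Lemma_variation_co-Euler_structures_on_man_with_boundary_without_base_point} computes $\frac{\partial}{\partial u}\int_{(M,\partial M)}2\underline{\alpha}_{u}\wedge\omega(\nabla^{E},b_{u})$ as $-(-1)^{m}\int_{(M,\partial M)}\underline{\mathbf{e}}(\mathbb{M},g_{u})\Tr(b_{u}^{-1}\dot{b}_{u})+2\int_{(M,\partial M)}\left.\frac{\partial}{\partial\tau}\right|_{\tau=0}\underline{\widetilde{\mathbf{e}}}(\mathbb{M},g_{u},g_{u}+\tau\dot{g}_{u})\wedge\omega(\nabla^{E},b_{u})$, and formula (\ref{equation_variation_formula_with_out_base_point_2}) computes $\frac{\partial}{\partial u}\int_{\partial M}B(\partial_{+}M,\partial_{-}M,g_{u})$ as $\int_{\partial M}\left.\frac{\partial}{\partial\tau}\right|_{\tau=0}B(\partial_{+}M,\partial_{-}M,g_{u}+\tau\dot{g}_{u})$. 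Substituting these, together with the identity $\partial_{u}\Tr(b_{u}^{-1}\nabla^{E}b_{u})=d\Tr(b_{u}^{-1}\dot{b}_{u})$ from \cite{Burghelea-Haller}, the desired infinitesimal identity becomes a term-by-term cancellation against the anomaly formula; integrating from $u=0$ to $u=1$ yields (ii), and combining it with (i) (to absorb the exact relative form by which $\underline{\alpha}_{1}$ and $\underline{\alpha}_{0}+\underline{\widetilde{\mathbf{e}}}(\mathbb{M},g_{0},g_{1})$ differ) gives the claim.

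The hard part is the bookkeeping in (ii): one must match the signs and the individual summands of the anomaly formula \cite[Theorem 3]{Maldonado} with exactly those produced by Proposition \ref{Lemma_variation_co-Euler_structures_on_man_with_boundary_without_base_point}. The two delicate points are, first, that the metric-variation part of the anomaly is precisely the transgression term $\int_{(M,\partial M)}\left.\partial_{\tau}\right|_{0}\underline{\widetilde{\mathbf{e}}}(\mathbb{M},g_{u},g_{u}+\tau\dot{g}_{u})\wedge\omega(\nabla^{E},b_{u})$ together with the variation of the boundary form $B$; and second, that the $b$-variation part of the anomaly --- whose local density is governed by $\underline{\mathbf{e}}(\mathbb{M},g_{u})\Tr(b_{u}^{-1}\dot{b}_{u})$ --- is cancelled by the first term on the right of (\ref{equation_variation_formula_with_out_base_point_1}). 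Here the assumption $\chi(M,\partial_{-}M)=0$ (equivalently $\chi(M,\partial_{+}M)=0$) enters twice: it makes $\underline{\mathbf{e}}(\mathbb{M},g)$ exact in relative cohomology, so that a representative $(g,\underline{\alpha})$ of $\mathfrak{e}^{*}$ exists at all, and it is exactly the hypothesis under which the variational formulas of Proposition \ref{Lemma_variation_co-Euler_structures_on_man_with_boundary_without_base_point} were derived.
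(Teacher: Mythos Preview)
Your proposal is correct and follows essentially the same route as the paper: vary $(g_{u},\underline{\alpha}_{u},b_{u})$ along a smooth path with $[g_{u},\underline{\alpha}_{u}]=\mathfrak{e}^{*}$ fixed, compute the logarithmic derivative of the product, and observe that the variation of the correction factor (computed via Proposition~\ref{Lemma_variation_co-Euler_structures_on_man_with_boundary_without_base_point}) exactly cancels the anomaly of $[\tau^{\mathsf{RS}}]^{E,g,b}_{\mathbb{M}}$ from \cite{Maldonado}. The paper's proof differs only in organization---it treats a single arbitrary family rather than splitting into your cases (i) and (ii)---and in making explicit the dictionary (the relations in (\ref{proof_final_formula_3}) and Lemma~\ref{Lemma_duality_e_partial_on_bordisms}) between the boundary forms $B$, $\mathbf{e_{b}}$, $\widetilde{\mathbf{e}}_{\mathbf{b}}$ appearing in the Br\"uning--Ma anomaly formula and the modified forms $B(\partial_{+}M,\partial_{-}M,g)$, $\mathbf{e}_{\partial}$, $\widetilde{\mathbf{e}}_{\partial}$ built from the vector field $\varsigma$; this translation is precisely the ``bookkeeping'' you identify as the hard part.
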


\begin{proof}
We have to prove that $[\tau]^{E,\mathfrak{e}^{*},[b]}_{\mathbb{M}}$
is independent of the choice of representatives for the co-Euler structure and it depends on $\nabla^{E}$ and the homotopy class $[b]$ of $b$ only.
For $\{(g_{w},\underline{\alpha}_{w})\}_{w\in U}$ a real one-parameter smooth path of Riemannian metrics $g_{w}$ on $M$ relative forms $\underline{\alpha}_{w}\in\Omega^{m-1}(M,\partial M;\Theta_{M}^{\C})$ representing the \textit{same} co-Euler structure $\mathfrak{e}^{*}\in\mathbf{\mathfrak{Eul}}^{*}(\mathbb{M};\C)$ and
$\{b_{w}\}$ a real one-parameter smooth path  of
non-degenerate symmetric bilinear forms on $E$, consider the family
$[\tau]^{E,(g,\underline{\alpha}_{w}),b_{w}}_{\mathbb{M}}$ of bilinear forms.

We claim that $[\tau]^{E,(g,\underline{\alpha}_{w}),b_{w}}_{\mathbb{M}}$ is independent of the parameter $w$, or equivalently its corresponding logarithmic derivative vanishes.
To prove the claim, fix $u\in U$, consider the complex number
$
[\tau]^{E,(g,\underline{\alpha}_{w}),b_{w}}_{\mathbb{M}}\slash[\tau]^{E,(g,\underline{\alpha}_{w}),b_{u}}_{\mathbb{M}}
$
and remark that its logarithm derivative
with respect to $w$, is the sum of two contributions:
\begin{enumerate}
 \item[(a)] the logarithmic derivative w.r.t. $w$ of the exponential depending on the co-Euler structures:
  $$
\exp\left({2\int_{(M,\partial M)}\underline{\alpha_{w}}\wedge\omega(E,b_{w})-
 \mathsf{rank}(E)\int_{\partial M}B(\partial_{+}M,\partial_{-}M,g_{w})}\right)
  $$
 \item[(b)] the logarithmic derivative w.r.t. $w$
   of
   $
   [\tau]^{E,g,b_{w}}_{\mathbb{M}}\slash[\tau]^{E,g,b_{u}}_{\mathbb{M}},
   $
   which corresponds to the anomaly formulas for the complex-valued Ray--Singer torsion.
\end{enumerate}
 The logarithmic derivative in (b) has been computed
 \cite[Theorem 2]{Maldonado}
 in terms of the characteristic forms, as defined by Br\"uning and Ma,
 \begin{equation}
  \label{proof_final_formula_1}
  B(\partial M,g_{w}),\quad\mathbf{e}(M,g_{w}),\quad\mathbf{e_{b}}(\partial M,g_{w})\quad\text{and}\quad
 \widetilde{\mathbf{e}}_{\mathbf{b}}(\partial M,g_{u},g_{w}).
 \end{equation}
 The logarithmic derivative (a), computed in Proposition
 \ref{Lemma_variation_co-Euler_structures_on_man_with_boundary_without_base_point}, expresses the variation of the representatives of the co-Euler structures, see
 Definition \ref{definition_Co-EulerStructure_without_base_point}, with respect to smooth variations of $w$. The corresponding formulas in
 (\ref{equation_variation_formula_with_out_base_point_1}) and (\ref{equation_variation_formula_with_out_base_point_2})
 from Proposition \ref{Lemma_variation_co-Euler_structures_on_man_with_boundary_without_base_point}
 are written (see Definitions \ref{definitions_of_modified_e_partial_B_forms_on_bordisms} and \ref{definitions_of_modified_e_partial_B_forms_on_bordisms},
 and Definition \ref{definition_relative_Euler_form}, (\ref{definition_Chern_Simons_relative_form_on_bordism}))
 in terms of the characteristic forms
 \begin{equation}
  \label{proof_final_formula_2}
 \begin{array}{rcl}
 B(\partial_{+}M,\partial_{-}M,g_{w})&:=&B_{\varsigma}(\partial M, g_{w})\\
 \underline{\mathbf{e}}(\mathbb{M},g_{w})&:=&(\mathbf{e}(M,g_{w}),\mathbf{e}_{\partial}(\partial_{+}M,\partial_{-}M,g_{w})),\\
 \underline{\mathbf{\widetilde{e}}}(\mathbb{M},g_{u},g_{w})
&:=&\left(\widetilde{\mathbf{e}}\left(M,g_{u},g_{w}\right),
-\left.\widetilde{\mathbf{e}}_{\partial}\right.\left(\partial_{+}M,\partial_{-}M,g_{u},g_{w}\right)\right)
\end{array}
 \end{equation}
where $\varsigma$ is the unit vector field at the boundary defined in (\ref{definitions_out_in_unit_vector_fields_at_boundary_plus}).
But the construction of the forms in (\ref{proof_final_formula_2}) is compatible with the forms from Br\"uning and Ma in (\ref{proof_final_formula_1}). More precisely,
\begin{equation}
  \label{proof_final_formula_3}
 \begin{array}{rcl}
 B(\partial_{+}M,\partial_{-}M,g_{w})|_{\partial_{\pm}M}&=&(\pm 1)^{m-1}B(\partial M,g_{w})|_{\partial_{\pm}M}\\
 \mathbf{e}_{\partial}(\partial_{+}M,\partial_{-}M,g_{w})|_{\partial_{\pm}M}&=&(\pm 1)^{m}\mathbf{e_{b}}(\partial M,g_{w})|_{\partial_{\pm}M}\\
 \left.\widetilde{\mathbf{e}}_{\partial}\right.\left(\partial_{+}M,\partial_{-}M,g_{u},g_{w}\right)|_{\partial_{\pm}M}&=&(\pm 1)^{m}\widetilde{\mathbf{e}}_{\mathbf{b}}(\partial M,g_{u},g_{w})|_{\partial_{\pm}M}
\end{array}
 \end{equation}
 see also Lemma \ref{Lemma_duality_e_partial_on_bordisms}.
 Therefore, with (\ref{proof_final_formula_3}), the contribution from (a) and (b) are the same up to $-1$ factor. The proof is complete.
\end{proof}

\subsection{Without conditions on $\chi(M,\partial_{\pm}M)$}
\label{section_generalized_torsion_with_co-Euler Structures with base point}

Let $\mathbb{M}$ be a  bordism and
$E$ a complex flat vector bundle over $M$ with flat connection $\nabla^{E}$. We
assume it is endowed with a complex non-degenerate symmetric
bilinear form $b$ and $\omega(E,b)$ the corresponding closed 1-form of Kamber---Tondeur, see (\ref{Kamber-Tondeur_form}).
For
$x_{0}\in\mathbf{int}(M)$, let
$\mathfrak{e}^{*}_{x_{0}}\in\mathbf{\mathfrak{Eul}}_{x_{0}}^{*}(\mathbb{M};\C)$
be a co-Euler structures based at $x_{0}$, see Definition \ref{definition_Set_of_coeuler_structures_with_base_point},
represented by $(g,\underline{\alpha})$, where $\underline{\alpha}:=(\alpha,\alpha_{\partial})$ is a relative form with
$\alpha\in\Omega^{m-1}(\dot{M};\Theta_{M}^{\C})$ and $\dot{M}:=M\backslash\{x_{0}\}.$

Let $b_{(\det E_{x_{0}})^{-\chi(M,\partial_{-}M)}}$ be the induced bilinear form
on $(\det E_{x_{0}})^{-\chi(M,\partial_{-}M)}$. 
Consider $\tau^{\mathsf{RS}}_{E,g,b}$ the complex-valued Ray--Singer torsion on $\mathbb{M}$,
 and
$\mathcal{S}$
the function regularizing $\int_{(M,\partial M)}$
studied in Proposition \ref{Proposition_regularization_function_S_both_boundaries}.

\begin{thm}
\label{Theorem_Burghelea_Haller_torsion_both_boundaries_chi_general_anomaly_formula}
The formula
\begin{equation}
 \label{equation_Burghelea_Haller_torsion_both_boundaries_chi_general}
\begin{array}{c}
 \tau^{\mathsf{an}}_{E,\mathfrak{e}^{*}_{x_{0}},[b]}:=\tau^{\mathsf{RS}}_{E,g,b }\cdot
 \mathsf{e}^{2\mathcal{S}(\underline{\alpha},\omega(E,b))-
 \mathsf{rank}(E)\int_{\partial M}B(\partial_{+}M,\partial_{-}M,g)}\otimes
 b_{(\det E_{x_{0}})^{-\chi(M,\partial_{-}M)}},\\
\end{array}
\end{equation}
defines a bilinear form on $\det(H(M,\partial_{-}M))\otimes (\det E_{x_{0}})^{-\chi(M,\partial_{-}M)}$,
which is independent of the choice of representative for the co-Euler structure and depends
on the connection and the homotopy class $[b]$ of $b$ only.
\end{thm}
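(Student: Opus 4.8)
The plan is to adapt the proof of Theorem~\ref{definition_Burghele_Haller_torsion} to the base-pointed situation, the new ingredients being the regularization $\mathcal{S}$ of Proposition~\ref{Proposition_regularization_function_S_both_boundaries} in the role of the pairing $\int_{(M,\partial M)}$, and the extra factor on $(\det E_{x_{0}})^{-\chi(M,\partial_{-}M)}$, whose purpose is exactly to absorb the local-degree term that $\mathcal{S}$ carries. First I would observe that the hypotheses of Proposition~\ref{Proposition_regularization_function_S_both_boundaries} are automatic for any representative $(g,\underline{\alpha})$ of $\mathfrak{e}^{*}_{x_{0}}$: the condition $\mathbf{d}\underline{\alpha}=\underline{\mathbf{e}}(\mathbb{M},g)$ forces $d\alpha=\mathbf{e}(M,g)$ to be smooth, and $\alpha_{\partial}$ is a genuine smooth form on $\partial M$. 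Hence $\mathcal{S}(\underline{\alpha},\omega(E,b))$ is a well-defined finite complex number, so (\ref{equation_Burghelea_Haller_torsion_both_boundaries_chi_general}) is the product of the bilinear form $\tau^{\mathsf{RS}}_{E,g,b}$ on $\det H(M,\partial_{-}M;E)$ by a non-zero scalar, tensored with the bilinear form induced by $b$ on $(\det E_{x_{0}})^{-\chi(M,\partial_{-}M)}$; by Corollary~\ref{corollary_alpha_computes_local_degre} this last line bundle has exponent $-\mathsf{deg}_{x_{0}}(\alpha)=-\chi(M,\partial_{-}M)$ independent of the representative, so the recipe is unambiguous. As in Theorem~\ref{definition_Burghele_Haller_torsion}, independence of the representative and dependence on $\nabla^{E}$ and $[b]$ only then reduce to showing that along a smooth one-parameter family $\{(g_{w},\underline{\alpha}_{w})\}_{w}$ of representatives of a \emph{fixed} $\mathfrak{e}^{*}_{x_{0}}$ and a smooth family $\{b_{w}\}$ of non-degenerate symmetric bilinear forms, the logarithmic derivative in $w$ of $\tau^{\mathsf{an}}_{E,\mathfrak{e}^{*}_{x_{0}},[b]}$ (relative to a fixed index $u$) vanishes; a homotopy of $b$ at fixed $(g,\underline{\alpha})$ gives the $[b]$-invariance and integrating the infinitesimal statement gives representative-independence.

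For fixed $u$, I would split this logarithmic derivative into four contributions: (a) $\partial_{w}\bigl(2\,\mathcal{S}(\underline{\alpha}_{w},\omega(E,b_{w}))\bigr)$; (b) $-\mathsf{rank}(E)\,\partial_{w}\!\int_{\partial M}B(\partial_{+}M,\partial_{-}M,g_{w})$; (c) the logarithmic derivative of the ratio of complex-valued Ray--Singer torsions, given by the anomaly formula \cite[Theorem~2]{Maldonado} in terms of the Br\"uning--Ma forms in (\ref{proof_final_formula_1}); and (d) the logarithmic derivative of the bilinear form induced by $b_{w}$ on $(\det E_{x_{0}})^{-\chi(M,\partial_{-}M)}$, which equals $-\chi(M,\partial_{-}M)\,\bigl(\Tr(b_{w}^{-1}\dot{b}_{w})\bigr)(x_{0})$. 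To compute (a), I would use that $\mathcal{S}$ is linear in each slot (Lemma~\ref{Lemma_main_properties_of_S_in_particular_linearity_independance_and_how_it_acts_on_exact_forms}(2)) to separate the variation of $\underline{\alpha}_{w}$ from that of $\omega(E,b_{w})$. For the ``metric'' part, formula (\ref{Lemma_variation_of_S_b_constant}) of Proposition~\ref{Proposition_regularization_function_S_both_boundaries}, together with $\underline{\widetilde{\mathbf{e}}}(\mathbb{M},g_{u},g_{u})=0$ and the additivity of Lemma~\ref{basic_properties_of_chern_simon_forms}, gives exactly $2\int_{(M,\partial M)}\partial_{\tau}|_{\tau=0}\underline{\widetilde{\mathbf{e}}}(\mathbb{M},g_{u},g_{u}+\tau\dot{g}_{u})\wedge\omega(E,b_{u})$, the same term as in (\ref{equation_variation_formula_with_out_base_point_1}). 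For the ``bilinear'' part, I would use $\partial_{w}\Tr(b_{w}^{-1}\nabla^{E}b_{w})=d\,\Tr(b_{w}^{-1}\dot{b}_{w})$ from \cite{Burghelea-Haller} together with (\ref{Kamber-Tondeur_form}) to get $\partial_{w}\omega(E,b_{w})=-\tfrac12\,d\,\Tr(b_{w}^{-1}\dot{b}_{w})$, and then apply Lemma~\ref{Lemma_main_properties_of_S_in_particular_linearity_independance_and_how_it_acts_on_exact_forms}(3) with $h=\Tr(b_{u}^{-1}\dot{b}_{u})$, using $\mathbf{d}\underline{\alpha}_{u}=\underline{\mathbf{e}}(\mathbb{M},g_{u})$ and $\mathsf{deg}_{x_{0}}(\alpha_{u})=\chi(M,\partial_{-}M)$ (established in the proof of Proposition~\ref{Proposition_regularization_function_S_both_boundaries}); this part contributes $-(-1)^{m}\int_{(M,\partial M)}\underline{\mathbf{e}}(\mathbb{M},g_{u})\,\Tr(b_{u}^{-1}\dot{b}_{u})+\chi(M,\partial_{-}M)\bigl(\Tr(b_{u}^{-1}\dot{b}_{u})\bigr)(x_{0})$.

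The decisive point is that the second summand just produced in (a), namely $+\chi(M,\partial_{-}M)\bigl(\Tr(b_{u}^{-1}\dot{b}_{u})\bigr)(x_{0})$, is cancelled exactly by contribution (d). What remains of (a)$+$(b)$+$(c)$+$(d) is then combinatorially identical to the sum of contributions treated in the proof of Theorem~\ref{definition_Burghele_Haller_torsion}: the metric part of (a) together with (b) reproduce the right-hand sides of the variation formulas (\ref{equation_variation_formula_with_out_base_point_1})--(\ref{equation_variation_formula_with_out_base_point_2}) of Proposition~\ref{Lemma_variation_co-Euler_structures_on_man_with_boundary_without_base_point}, which match (c) up to an overall factor $-1$ once the compatibility relations (\ref{proof_final_formula_3}) between the modified forms $B(\partial_{+}M,\partial_{-}M,g)$, $\underline{\mathbf{e}}(\mathbb{M},g)$, $\underline{\widetilde{\mathbf{e}}}(\mathbb{M},g_{u},g_{w})$ and the original Br\"uning--Ma forms in (\ref{proof_final_formula_1}) are invoked; hence the total logarithmic derivative is zero. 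I expect the only genuinely delicate step to be the sign and normalization bookkeeping in (a)---in particular, getting the local-degree term in Lemma~\ref{Lemma_main_properties_of_S_in_particular_linearity_independance_and_how_it_acts_on_exact_forms}(3) to match the logarithmic derivative of the bilinear form induced by $b_{w}$ on $(\det E_{x_{0}})^{-\chi(M,\partial_{-}M)}$ with the right sign so that they cancel, after which the argument reduces to the bookkeeping already carried out in the proof of Theorem~\ref{definition_Burghele_Haller_torsion}.
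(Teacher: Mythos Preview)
Your proposal is correct and follows essentially the same approach as the paper: the key ingredients---formula (\ref{Lemma_variation_of_S_b_constant}) for the metric variation of $\mathcal{S}$, Lemma~\ref{Lemma_main_properties_of_S_in_particular_linearity_independance_and_how_it_acts_on_exact_forms}(3) for the exact part of the variation of $\omega(E,b)$, and the cancellation of the resulting local-degree term against the variation of $b_{(\det E_{x_{0}})^{-\chi(M,\partial_{-}M)}}$---are identical. The only organizational difference is that the paper separates the two variations (first $b$ fixed and the metric moving, then the metric and co-Euler representative fixed and $b$ moving), whereas you compute the joint infinitesimal variation in one go; your infinitesimal choice $h=\Tr(b_{u}^{-1}\dot b_{u})$ is in fact a cleaner implementation of the same idea than the paper's finite comparison via $\log\det(b_{1}^{-1}b_{2})$.
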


\begin{proof}
On the one hand, if $b$ is fixed and we only look at changes of the metric, then
the variation of $\tau^{\mathsf{an}}_{E,(g,(\underline{\alpha},\theta)),b}$ with respect to the metric compensates
the variation of the function $\mathcal{S}(\underline{\alpha},\omega(E,b))$, which is explicitly given by formula
(\ref{Lemma_variation_of_S_b_constant}) in Proposition \ref{Proposition_regularization_function_S_both_boundaries}.
On the other hand, when $g$ and $\mathfrak{e}^{*}_{x_{0}}$ are kept constant
and we allow $b$ to smoothly change from $b_{1}$ to $b_{2}$, then the variation of the Kamber--Tondeur form is given by
$$
\begin{array}{rcl}
\scriptstyle\omega(E,b_{2})-\omega(E,b_{1})=
\scriptstyle-\frac{1}{2}\det((b_{1}^{-1}b_{2})^{-1})d \det\left(b_{1}^{-1}b_{2}\right)
=-\frac{1}{2}d\log\det((b_{1}^{-1}b_{2})^{-1}),
\end{array}
$$
where the last equality holds, since $b_2$ and $b_1$ are homotopic and therefore the function
$$\det((b_{1}^{-1}b_{2})^{-1}):M\rightarrow\C\backslash\{0\},$$
is homotopic to the constant function 1, which in turn
allows to find a function $$\log\det((b_{1}^{-1}b_{2})^{-1}):M\rightarrow\C,$$ with
$$d\log\det((b_{1}^{-1}b_{2})^{-1})=
\det((b_{1}^{-1}b_{2})^{-1})d \det(b_{1}^{-1}b_{2}).$$
This, with
$f= \Tr((b_{1}^{-1}b_{2})^{-1})$ and Lemma \ref{Lemma_main_properties_of_S_in_particular_linearity_independance_and_how_it_acts_on_exact_forms},
implies that
$$
\begin{array}{l}
\scriptstyle 2\mathcal{S}_{f}(\underline{\alpha},\omega(E,b_{2}))-2\mathcal{S}_{f}(\underline{\alpha},\omega(E,b_{1}))=
2\mathcal{S}_{f}\left(\underline{\alpha},d\log\det\left(\left(b_{1}^{-1}b_{2}\right)^{-1}\right)\right)\\ \\
\scriptstyle\hspace{1.5cm}=-(-1)^{m} \int_{(M,\partial M)}\underline{\mathbf{e}}(\mathbb{M},g)\log\det\left(\left(b_{1}^{-1}b_{2}\right)^{-1}\right)\\
\scriptstyle\hspace{6cm}+\log\det\left(\left(b_{1}^{-1}b_{2}\right)^{-1}\right)(x_{0})\chi(M,\partial_{-}M)\\ \\
\scriptstyle\hspace{1.5cm}=-(-1)^{m} \int_{(M,\partial M)}\underline{\mathbf{e}}(\mathbb{M},g) \Tr\left(\left(b_{1}^{-1}b_{2}\right)^{-1}\right)\\
\scriptstyle\hspace{6cm}+
\Tr\left(\left(b_{1}^{-1}b_{2}\right)^{-1}\right)(x_{0})\chi(M,\partial_{-}M),
\end{array}
$$
where the additional term $ \Tr((b_{1}^{-1}b_{2})^{-1})(x_{0})\chi(M,\partial_{-}M)$ cancels the variation
of the induced bilinear form on $(\det E_{x_{0}})^{-\chi(M,\partial_{-}M)}$ given by
$$
\left({b_{1}}_{(\det E_{x_{0}})^{-\chi(M,\partial_{-}M)}}\right)^{-1}{b_{2}}_{(\det E_{x_{0}})^{-\chi(M,\partial_{-}M)}}=\det(b_{1}^{-1}b_{2})^{-\chi(M,\partial_{-}M)}.
$$

\end{proof}

\subsection{Complex-valued analytic torsion and Poincar\'e duality}
\index{generalized complex-valued analytic torsion and Poincar\'e duality}
Let us consider the bordism $\mathbb{M}^{\prime}$ in (\ref{bordism}) dual to $\mathbb{M}$,
$E^{\prime}$ the dual complex vector bundle of $E$ endowed with the corresponding dual connection and $b^{\prime}$ the
non-degenerate symmetric bilinear form dual to $b$ on $E$. 
By Poincar\'e--Lefschetz duality, we have
$$
H^{p}(M,\partial_{+}M;E^{\prime}\otimes\Theta_{M})\cong {H^{m-p}(M,\partial_{-}M;E)}^{\prime}
$$
and hence there is a canonic isomorphism of determinant line bundles
\begin{equation}
 \label{eqation_equality_of_determinant_lines}
 \det\left(H\left(M,\partial_{+}M;E^{\prime}\otimes\Theta_{M}\right)\right)\cong
 \left(\det\left(H(M,\partial_{-}M;E)\right)\right)^{(-1)^{m+1}},
\end{equation}
see for instance \cite{Knudson-Mumford}, \cite{Milnor_Cobordism} and \cite{Milnor_Whitehead_Torsion}.
The bilinear Laplacians $\Delta_{E,g,b,q}$ and
$\Delta_{E^{\prime}\otimes\Theta_{M},g,b^{\prime},m-q}$, as well as the corresponding boundary conditions
are intertwined by the isomorphism
$\star_{g}\otimes b:\Omega^{q}(M;E)\rightarrow\Omega^{m-q}(M;E^{\prime}\otimes\Theta_{M}).$ This implies that
their $\mathsf{L}^{2}$-realizations of $\Delta_{E,g,b,q}$ and
$\Delta_{E,g,b,m-q}^{\prime}$ are isospectral, and therefore
\begin{equation}
\label{equation_Laplacian_isospectral_then_theri_determinats_are_the_same}
{\det}^{\prime}(\Delta_{E,g,b,q})=
{\det}^{\prime}({\Delta_{E^{\prime}\otimes\Theta_{M},g,b^{\prime},m-q}}).
\end{equation}

By definition of the torsion in (\ref{equation_Burghelea_Haller_torsion_both_boundaries_chi_nul}), the isomorphism in
(\ref{equation_Laplacian_isospectral_then_theri_determinats_are_the_same}),
the identity in (\ref{eqation_equality_of_determinant_lines}), the formula
$
\Pi_{q}\left({\det}^{\prime}(\Delta_{E,g,b,q})\right)^{(-1)^{q}}=1
$
see \cite{Maldonado_thesis},
the relation between the forms $B(\partial_{+}M,\partial_{-}M,g)$ and $B(\partial_{-}M,\partial_{+}M,g)$
from Lemma \ref{Lemma_duality_e_partial_on_bordisms},
and
\begin{equation}
\label{equation_relation_Kamber_Tondeur_Form_dual}
\omega(E^{\prime}\otimes\Theta_{M},b^{\prime})=-\omega(E,b),
\end{equation}
see \cite[Section 2.4]{Burghelea-Haller}, we obtain
\begin{equation}
 \label{equation_Burghelea_Haller_torsion_both_boundaries_and_Poincare_duality}
 [\tau]^{E^{\prime}\otimes\Theta_{M},\nu^{*}(\mathfrak{e}^{*}),[b^{\prime}]}_{\mathbb{M}^{\prime}} =
 \left([\tau]^{E,\mathfrak{e}^{*},[b]}_{\mathbb{M}}\right)^{(-1)^{m+1}},
\end{equation}
where $\nu^{*}:\mathfrak{Eul}^{*}(\mathbb{M};\C)\rightarrow\mathfrak{Eul}^{*}({\mathbb{M}^{\prime}};\C)$
is the map in (\ref{equation_flip_map_co-Euler_structures}), intertwining the corresponding
co-Euler structures. The formula in (\ref{equation_Burghelea_Haller_torsion_both_boundaries_and_Poincare_duality})
exhibits the behavior of generalized complex-valued torsion on the bordism
$\mathbb{M}$ under Poincar\'e--Lefschetz duality, generalizing this situation
in the case without boundary, see \cite[(31)]{Burghelea-Haller}.


 \section{Appendix}
  \label{section_Appendix}
  In this section, for the reader's convenience, we stay close to the notation in \cite{Bruening-Ma} (see also \cite[Chapter 3]{Bismut-Zhang}).
  \subsection{The Berezin integral and Pfaffian}
  \label{section_The Berezin integral and Pfaffian}
  For $A$ and $B$ two unital $\Z_{2}$-graded algebras, with respective unities
  $1_{A}$ and $1_{B}$, we consider their $\Z_{2}$-\textit{graded tensor product} denoted by
  $A\widehat{\otimes}B$. The map $w\mapsto w\widehat{\otimes} 1_{B}$ provides a canonical isomorphism
  between $A$ and the subalgebra $A\widehat{\otimes} 1_{B}\subset A\widehat{\otimes} B$, whereas with the map
  $w\mapsto \widehat{w}:=1_{A}\widehat{\otimes} w$ we canonically identify $B$ with
  the subalgebra $\widehat{B}:=1_{A}\widehat{\otimes}B\subset A\widehat{\otimes} B$. As $\Z_{2}$-graded algebras, one has
  $A\widehat{\otimes}\widehat{B}\cong A\widehat{\otimes}B$.

  Let $W$ and $V$ be finite dimensional vector
  spaces of dimension $n$ and $l$ respectively, with $W^{\prime}$ and $V^{\prime}$ their corresponding dual spaces. We
  denote by $\Theta_{W}$ the orientation line of $W$. Assume $W$ is endowed with a Hermitian product $\langle\cdot,\cdot\rangle$,
  fix $\{w_{i}\}_{i=1}^{n}$ an orthonormal basis of $W$ and use the metric to fix $\{w^{i}\}_{i=1}^{n}$ the corresponding dual basis in $W^{\prime}$.
  Then, each antisymmetric endomorphism $K$ of $W$ can be uniquely identified with
  the section $\mathbf{K}$ of $\widehat{\Lambda(W^{\prime})}$ given by
  $$
  \mathbf{K}:=\frac{1}{2}\sum_{1\leq i,j\leq n}\langle w_{i},K w_{j}\rangle\widehat{w^{i}}\wedge\widehat{w^{j}}.
  $$

  The \textit{Berezin integral} $$\int^{B}:\Lambda V^{\prime}\widehat{\otimes}\widehat{\Lambda(W^{\prime})}\rightarrow\Lambda V^{\prime}\otimes\Theta_{W}$$
  is the linear map given by $\alpha\widehat{\otimes}\widehat{\beta}\mapsto C_{B}\beta_{g,b}(w_{1},\ldots,w_{n})$,
  with constant
  $C_{B}:=(-1)^{n(n+1)/2}\pi^{-n/2}$.
  Then, $\mathbf{Pf}\left(K/2\pi\right)$, the \textit{Pfaffian} of $K/2\pi$, is defined by
  $$
  \mathbf{Pf}\left(K/2\pi\right):=\int^{B}\exp(\mathbf{K}/2\pi).
  $$
  Remark that $\mathbf{Pf}\left(K/2\pi\right)=0$, if $n$ is odd.
  By standard fiber-wise considerations the map $\mathbf{Pf}$ is extended for
  vector bundles over $M$.

  \subsection{Certain characteristic forms on the boundary}
  Let $M$ be a $m$-dimensional compact Riemannian manifold with boundary $\partial M$ and denote by $i:\partial M\hookrightarrow M$ the canonical embedding.
  We denote by $g:=g^{TM}$ (resp.  $g^{\partial}:=g^{T\partial M}$) the Riemannian metric on $TM$
  (resp. on $T\partial M$ and induced by $g$), by $\nabla$ (resp. $\nabla^{\partial}$) the
  corresponding Levi-Civita connection and by $\mathsf{R}^{{TM}}$ (resp. $\mathsf{R}^{{T\partial M}}$) its curvature.
  Let $\{e_{i}\}_{i=1}^{m}$ be an orthonormal frame of $TM$ with the property that near the
  boundary, $e_{m}=\varsigma_{\mathsf{in}}$, i.e., the inward pointing unit normal vector field on the boundary.
  The corresponding induced orthonormal local frame on $T\partial M$ will be denoted by $\{e_{\alpha}\}_{\alpha=1}^{m-1}$. As usual, the metric is used to fix
  $\{e^{i}\}_{i=1}^{m}$ (resp. $\{e^{\alpha}\}_{\alpha=1}^{m-1}$) the corresponding dual frame of $T^{*}M$ (resp. $T^{*}\partial M$).

  With the notation in Appendix \ref{section_The Berezin integral and Pfaffian}, a smooth section $w$ of $\Lambda T^{*}M$
  is identified with the section $w\widehat{\otimes}1$ of $\Lambda T^{*}M\widehat{\otimes}\Lambda T^{*}M$, whereas
  $\widehat{w}$ denotes the corresponding section $1\widehat{\otimes}w$ of $\Lambda T^{*}M\widehat{\otimes}\Lambda T^{*}M$.

  Here, the Berezin integrals
  $
    \int^{B_{M}}:\Lambda T^{*}M\widehat{\otimes}\widehat{\Lambda T^{*}M}\rightarrow\Lambda T^{*}M\otimes\Theta_{M}
  $
  and
  $
    \int^{B_{\partial M}}:\Lambda T^{*}\partial M\widehat{\otimes}\widehat{\Lambda(T^{*}\partial M)}\rightarrow\Lambda T^{*}\partial M\otimes\Theta_{\partial M}
  $
  can be compared under the given convention for the induced orientation bundle on the boundary, see
  Section \ref{section_background}.

  The curvature
 $\mathsf{R}^{{TM}}$ associated to $\nabla$, considered as a smooth section of
 $\Lambda^{2}(T^{*}M)\widehat{\otimes}\widehat{\Lambda^{2}(T^{*}M})\rightarrow M,$
 can be expanded in terms of the frame above as
$$
\begin{array}{rcll}
\scriptstyle\mathbf{R}^{{TM}}&\scriptstyle:=&\scriptstyle\frac{1}{2}\sum_{1\leq k,l\leq m}g^{{TM}}\left(e_{k},\mathsf{R}^{{TM}}e_{l}\right)\widehat{e^{k}}\wedge\widehat{e^{l}}
&\scriptstyle\in\quad\Gamma({M};\Lambda^{2}(T^{*}{M})\widehat{\otimes}\widehat{\Lambda^{2}({T^{*}M})})\\
\end{array}
$$
In the same way, consider the forms
\begin{equation}
\label{equation_definition_of_S_both_boundaries}
\begin{array}{rcll}
\scriptstyle i^{*}\mathbf{R}^{{TM}}&\scriptstyle:=&\scriptstyle\frac{1}{2}\sum\limits_{1\leq k,l\leq m}
g^{{TM}}(e_{k},i^{*}\mathsf{R}^{{TM}}e_{l})\widehat{e^{k}}\wedge\widehat{e^{l}}
&\scriptstyle\in\quad\Gamma({\partial M};\Lambda^{2}(T^{*}{\partial M})\widehat{\otimes}\widehat{\Lambda^{2}({T^{*}M})}),\\

\scriptstyle\left.\mathbf{R}^{{TM}}\right|_{\partial M}&\scriptstyle:=&\scriptstyle\frac{1}{2}\sum\limits_{1\leq \alpha,\beta\leq m-1}
g^{{TM}}(e_{\alpha},i^{*}\mathsf{R}^{{TM}}e_{\beta})\widehat{e^{\alpha}}\wedge\widehat{e^{\beta}}
&\scriptstyle\in\quad\Gamma({\partial M};\Lambda^{2}(T^{*}{\partial M})\widehat{\otimes}\widehat{\Lambda^{2}({T^{*}(\partial M)}))}),\\

\scriptstyle\mathbf{R}^{{T \partial M}}
&\scriptstyle:=&\scriptstyle\frac{1}{2}\sum\limits_{1\leq \alpha,\beta\leq m-1}
g^{{T \partial M}}(e_{\alpha},\mathsf{R}^{{T\partial M}}e_{\beta})\widehat{e^{\alpha}}\wedge\widehat{e^{\beta}}
&\scriptstyle\in\quad\Gamma({\partial M};\Lambda^{2}(T^{*}{\partial M})\widehat{\otimes}\widehat{\Lambda^{2}({T^{*}(\partial M)}))}),\\

\scriptstyle\mathbf{S}_{\varsigma}&\scriptstyle:=&
\scriptstyle\frac{1}{2}\sum\limits_{\beta=1}^{m-1}
g^{{TM}}((i^{*}\nabla^{{TM}})\varsigma,e_{\beta})\widehat{e^{\beta}}
&\scriptstyle\in\quad\Gamma({\partial M};T^{*}{\partial M}\widehat{\otimes}\widehat{\Lambda^{1}({T^{*}(\partial M)})})\\

\scriptstyle\mathbf{S}_{}&\scriptstyle:=&
\scriptstyle\frac{1}{2}\sum\limits_{\beta=1}^{m-1}
g^{{TM}}((i^{*}\nabla^{{TM}})\varsigma_{\mathsf{in}},e_{\beta})\widehat{e^{\beta}}
&\scriptstyle\in\quad\Gamma({\partial M};T^{*}{\partial M}\widehat{\otimes}\widehat{\Lambda^{1}({T^{*}(\partial M)})})\\

\end{array}
\end{equation}
to define
\begin{equation}
\label{label_in_article_differential_forms_boundary}
\begin{array}{rcl}
\mathbf{e}({M},\nabla^{{T M}})&:=&\int^{B_{M}}\exp\left(-\frac{1}{2}\mathbf{R}^{{T M}}\right),\\

\mathbf{e}({\partial M},\nabla^{{T \partial M}})&:=&\int^{B_{\partial M}}\exp\left(-\frac{1}{2}
\mathbf{R}^{{T \partial M}}\right),\\

\mathbf{e}_{\mathbf{b},\varsigma}({\partial M}, \nabla^{{T M}})&:=&
(-1)^{m-1}\int^{B_{\partial M}}\exp\left(-\frac{1}{2}(\mathbf{R}^{{T M}}|_{\partial  M})\right)\sum_{k=0}^{\infty}\frac{\mathbf{S}^{k}_{\varsigma}}{2\Gamma(\frac{k}{2}+1)},\\

B_{\varsigma}({\partial M}, \nabla^{{T M}})  &:=&
-\int^{1}_{0}\frac{du}{u}\int^{B_{\partial M}}\exp\left(-\frac{1}{2}\mathbf{R}^{{T \partial M}}-u^{2}\mathbf{S}_{\varsigma}^{2}\right)\sum_{k=1}^{\infty}\frac{\left(u\mathbf{S}_{\varsigma}\right)^{k}}{2\Gamma(\frac{k}{2}+1)},\\

B({\partial M}, \nabla^{{T M}})  &:=&
-\int^{1}_{0}\frac{du}{u}\int^{B_{\partial M}}\exp\left(-\frac{1}{2}\mathbf{R}^{{T \partial M}}-u^{2}\mathbf{S}_{\varsigma_{\mathsf{in}}}^{2}\right)\sum_{k=1}^{\infty}\frac{\left(u\mathbf{S}_{\varsigma_{\mathsf{in}}}\right)^{k}}{2\Gamma(\frac{k}{2}+1)}.

\end{array}
\end{equation}

  \begin{lem}
\label{Lemma_dual_Bruning_Mas_formulas}
$$
\begin{array}{rcl}
\mathbf{e}_{\mathbf{b},\varsigma}({\partial M}, \nabla^{ {T M}})&=&(-1)^{m-1}
\mathbf{e}_{\mathbf{b},-\varsigma}({\partial M}, \nabla^{ {T M}})\\
B_{\varsigma}({\partial M}, \nabla^{ {T M}}) & =
&(-1)^{m-1}B_{-\varsigma}({\partial M}, \nabla^{ {T M}}).
\end{array}
$$
\end{lem}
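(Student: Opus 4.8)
The plan is to isolate the dependence of the two boundary forms on the auxiliary field $\varsigma$ and to convert the sign produced by the substitution $\varsigma\mapsto-\varsigma$ into the uniform factor $(-1)^{m-1}$ by means of a parity count forced by the Berezin integral.

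First I would observe that, among the ingredients of $\mathbf{e}_{\mathbf{b},\varsigma}(\partial M,\nabla^{TM})$ and $B_{\varsigma}(\partial M,\nabla^{TM})$ in (\ref{label_in_article_differential_forms_boundary}), the field $\varsigma$ enters only through the section $\mathbf{S}_{\varsigma}$ of (\ref{equation_definition_of_S_both_boundaries}): the curvature sections $\mathbf{R}^{TM}|_{\partial M}$ and $\mathbf{R}^{T\partial M}$, the boundary orthonormal frame $\{e_{\alpha}\}_{\alpha=1}^{m-1}$, and the boundary Berezin integral $\int^{B_{\partial M}}$ involve only $e_{1},\dots,e_{m-1}$ and the induced orientation of $\partial M$, never $e_{m}$ or $\varsigma$. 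Since $i^{*}\nabla^{TM}$ is a linear connection, $\mathbf{S}_{\varsigma}$ is linear in $\varsigma$, whence $\mathbf{S}_{-\varsigma}=-\mathbf{S}_{\varsigma}$ and therefore $\mathbf{S}_{-\varsigma}^{2}=\mathbf{S}_{\varsigma}^{2}$.

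Next I would expand each of the two integrands into monomials in the exterior generators $\widehat{e^{\alpha}}$, $1\leq\alpha\leq m-1$. Every factor of $\mathbf{R}^{TM}|_{\partial M}$, of $\mathbf{R}^{T\partial M}$ and of $\mathbf{S}_{\varsigma}^{2}$ has degree $2$ in these generators, while each of the $k$ factors of $\mathbf{S}_{\varsigma}$ coming from $\sum_{k}\mathbf{S}_{\varsigma}^{k}/(2\Gamma(\tfrac{k}{2}+1))$, respectively $\sum_{k\geq 1}(u\mathbf{S}_{\varsigma})^{k}/(2\Gamma(\tfrac{k}{2}+1))$, has degree $1$. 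Because $\int^{B_{\partial M}}$ kills every monomial whose total degree in the $\widehat{e^{\alpha}}$ differs from $m-1=\dim\partial M$, the only monomials that survive have $k\equiv m-1\pmod{2}$. On such a monomial the substitution $\varsigma\mapsto-\varsigma$ produces the factor $(-1)^{k}=(-1)^{m-1}$ (the $\mathbf{S}_{\varsigma}^{2}$ factors, and the $\varsigma$-independent factors, being unaffected). Summing the surviving monomials and applying $\int^{B_{\partial M}}$ then gives $\mathbf{e}_{\mathbf{b},-\varsigma}(\partial M,\nabla^{TM})=(-1)^{m-1}\mathbf{e}_{\mathbf{b},\varsigma}(\partial M,\nabla^{TM})$ and, in the same way, $B_{-\varsigma}(\partial M,\nabla^{TM})=(-1)^{m-1}B_{\varsigma}(\partial M,\nabla^{TM})$; since $((-1)^{m-1})^{2}=1$, these are precisely the two claimed identities, and the scalar prefactor $(-1)^{m-1}$ appearing in the definition of $\mathbf{e}_{\mathbf{b},\varsigma}$ cancels on both sides.

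I expect no analytic obstacle: the $u$-integral in $B_{\varsigma}$ converges near $u=0$ since its series starts at $k=1$, and the whole computation is term-by-term. The one point that needs care is the bookkeeping of the previous paragraph --- verifying that no hidden $\varsigma$-dependence slips in through the Appendix's frame convention $e_{m}=\varsigma_{\mathsf{in}}$, which it does not, since the objects entering $\mathbf{e}_{\mathbf{b},\varsigma}$ and $B_{\varsigma}$ only use $e_{1},\dots,e_{m-1}$. As an alternative, one could derive the identities from the behaviour of the Br\"uning--Ma forms under reversal of the unit normal, see \cite{Bruening-Ma} and \cite[Chapter 3]{Bismut-Zhang}, but the self-contained parity argument above is shorter.
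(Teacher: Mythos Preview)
Your proposal is correct and follows essentially the same route as the paper's own proof: both arguments reduce to the observation $\mathbf{S}_{-\varsigma}=-\mathbf{S}_{\varsigma}$ together with the parity count that the boundary Berezin integral $\int^{B_{\partial M}}$ only sees the top-degree component in the generators $\widehat{e^{\alpha}}$, forcing $k\equiv m-1\pmod 2$ and hence $(-1)^{k}=(-1)^{m-1}$ on every surviving term. The paper carries this out by writing the constraint explicitly as $2l+k=m-1$ in the expansion of $\mathbf{e}_{\mathbf{b},\varsigma}$ and then says the case of $B_{\varsigma}$ is analogous; your version packages the same idea slightly more uniformly, but there is no substantive difference.
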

\begin{proof}
First, note that
$
\mathbf{S}_{\varsigma}=-\mathbf{S}_{-\varsigma}.
$
We compute  $\mathbf{e}_{\mathbf{b},\varsigma}({\partial M}, \nabla^{ {T M}})$ by recalling that
Berezin integrals \emph{see} top degrees terms only:
$$
\begin{array}{rcl}
\mathbf{e}_{\mathbf{b},\varsigma}({\partial M}, \nabla^{ {T M}})& = & (-1)^{m-1}\int^{B_{\partial M}}\exp\left(-\frac{1}{2}(\mathbf{R}^{ {T M}}|_{\partial  M})\right)\sum_{k=0}^{\infty}\frac{\mathbf{S}_{\varsigma}^{k}}{2\Gamma(\frac{k}{2}+1)},\\
& = & (-1)^{m-1}\int^{B_{\partial M}}\sum_{l=0}^{\infty}\frac{-\frac{1}{2}(\mathbf{R}^{ {T M}}|_{\partial  M})^{l}}{l!}\sum_{k=0}^{\infty}\frac{(-1)^{k}\mathbf{S}_{-\varsigma}^{k}}{2\Gamma(\frac{k}{2}+1)},
\\
& = & (-1)^{m-1}\int^{B_{\partial M}}\sum_{l,k=0}^{\infty}\frac{-\frac{1}{2}(\mathbf{R}^{ {T M}}|_{\partial  M})^{l}}{l!}\frac{(-1)^{k}\mathbf{S}_{-\varsigma}^{k}}{2\Gamma(\frac{k}{2}+1)},
\\
& = & (-1)^{m-1}\int^{B_{\partial M}}\sum_{l=0}^{\infty}\frac{-\frac{1}{2}(\mathbf{R}^{ {T M}}|_{\partial  M})^{l}}{l!}
\frac{(-1)^{m-(2l+1)}\mathbf{S}_{-\varsigma}^{m-(2l+1)}}{2\Gamma(\frac{m-(2l+1)}{2}+1)},
\\
& = & (-1)^{m-1}\int^{B_{\partial M}}\sum_{l=0}^{\infty}\frac{-\frac{1}{2}(\mathbf{R}^{ {T M}}|_{\partial  M})^{l}}{l!}
\frac{(-1)^{m-1}\mathbf{S}_{-\varsigma}^{m-(2l+1)}}{2\Gamma(\frac{m-(2l+1)}{2}+1)},
\\
& = & \int^{B_{\partial M}}\sum_{l=0}^{\infty}\frac{-\frac{1}{2}(\mathbf{R}^{ {T M}}|_{\partial  M})^{l}}{l!}
\frac{\mathbf{S}_{-\varsigma}^{m-(2l+1)}}{2\Gamma(\frac{m-(2l+1)}{2}+1)},
\\
& = & \int^{B_{\partial M}}\sum_{l,k=0}^{\infty}\frac{-\frac{1}{2}(\mathbf{R}^{ {T M}}|_{\partial  M})^{l}}{l!}\frac{\mathbf{S}_{-\varsigma}^{k}}{2\Gamma(\frac{k}{2}+1)},
\\
& = &(-1)^{m-1}\mathbf{e}_{\mathbf{b},-\varsigma}({\partial M}, \nabla^{ {T M}}) \\
\end{array}
$$
and analogously for
 the forms $B_{\mp\varsigma}({\partial M}, \nabla^{ {T M}})$.
\end{proof}

\begin{defi}
\label{definitions_of_modified_e_partial_B_forms_on_bordisms}
Define the functions
$\Pi_{\pm}:\partial M\rightarrow\R$ respectively by
\begin{equation}
\label{definitions_Pi_functions}
\begin{array}{lcr}
\Pi_{+}(y):=
\left\{
\begin{array}{cc}
1&\text{if }y\in \partial_{+}M\\
0&\text{if }y\in \partial_{-}M
\end{array}
\right.&\text{and}&
\Pi_{-}(y):=
\left\{
\begin{array}{cc}
0&\text{if }y\in \partial_{+}M\\
1&\text{if }y\in \partial_{-}M.
\end{array}
\right.
\end{array}
\end{equation}
and set
$$
\begin{array}{rl}
\scriptstyle\mathbf{e}_{\partial}( {\partial_{+}M}, {\partial_{-}M}, \nabla^{ {T M}}) & \scriptstyle:=
 i_{+}^{*}\left(\mathbf{e}_{\mathbf{b},\varsigma}( {\partial M}, \nabla^{ {T M}})\right)\Pi_{+1} - i_{-}^{*}\left(\mathbf{e}_{\mathbf{b},\varsigma}( {\partial M}, \nabla^{ {T M}})\right)\Pi_{-1}\\
\\
\scriptstyle\mathbf{e}_{\partial}( {\partial_{-}M}, {\partial_{+}M}, \nabla^{ {T M}}) & \scriptstyle:=
 i_{-}^{*}\left(\mathbf{e}_{\mathbf{b},-\varsigma}( {\partial M}, \nabla^{ {T M}})\right)\Pi_{-1} - i_{+}^{*}\left(\mathbf{e}_{\mathbf{b},-\varsigma}( {\partial M}, \nabla^{ {T M}})\right)\Pi_{+1}\\
\\
\scriptstyle B( {\partial_{+}M}, {\partial_{-}M}, \nabla^{ {T M}}) \scriptstyle& :=
B_{\varsigma}( {\partial M}, \nabla^{ {T M}})\\
\\
\scriptstyle B( {\partial_{-}M}, {\partial_{+}M}, \nabla^{ {T M}}) & \scriptstyle:=
B_{-\varsigma}( {\partial M}, \nabla^{ {T M}})
\
\end{array}
$$
\end{defi}

\begin{lem}
\label{Lemma_duality_e_partial_on_bordisms}
For the forms given in Definition \ref{definitions_of_modified_e_partial_B_forms_on_bordisms}, the relations
$$
\begin{array}{rcl}
\mathbf{e}_{\partial}( {\partial_{+}M}, {\partial_{-}M}, \nabla^{ {T M}})&=&
(-1)^{m}\mathbf{e}_{\partial}( {\partial_{-}M}, {\partial_{+}M}, \nabla^{ {T M}})\\
B( {\partial_{+}M}, {\partial_{-}M}, \nabla^{ {T M}})&=&
(-1)^{m-1}B( {\partial_{-}M}, {\partial_{+}M}, \nabla^{ {T M}})
\end{array}
$$
hold.
\end{lem}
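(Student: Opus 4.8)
The strategy is to reduce both identities to Lemma~\ref{Lemma_dual_Bruning_Mas_formulas}, which already encodes the effect of the substitution $\varsigma\mapsto-\varsigma$ on the boundary characteristic forms; once that is in hand, everything else is bookkeeping of signs in Definition~\ref{definitions_of_modified_e_partial_B_forms_on_bordisms}, and no new geometric input is required.

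I would first dispose of the transgression form. By the last two lines of Definition~\ref{definitions_of_modified_e_partial_B_forms_on_bordisms} one has $B(\partial_{+}M,\partial_{-}M,\nabla^{TM})=B_{\varsigma}(\partial M,\nabla^{TM})$ and $B(\partial_{-}M,\partial_{+}M,\nabla^{TM})=B_{-\varsigma}(\partial M,\nabla^{TM})$, so the claimed relation $B(\partial_{+}M,\partial_{-}M,\nabla^{TM})=(-1)^{m-1}B(\partial_{-}M,\partial_{+}M,\nabla^{TM})$ is nothing but the second identity of Lemma~\ref{Lemma_dual_Bruning_Mas_formulas}, read off directly.

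For the Euler-type form I would start from the first identity of Lemma~\ref{Lemma_dual_Bruning_Mas_formulas}, namely $\mathbf{e}_{\mathbf{b},\varsigma}(\partial M,\nabla^{TM})=(-1)^{m-1}\mathbf{e}_{\mathbf{b},-\varsigma}(\partial M,\nabla^{TM})$, substitute it into the defining expression $\mathbf{e}_{\partial}(\partial_{+}M,\partial_{-}M,\nabla^{TM})=i_{+}^{*}(\mathbf{e}_{\mathbf{b},\varsigma})\Pi_{+}-i_{-}^{*}(\mathbf{e}_{\mathbf{b},\varsigma})\Pi_{-}$, and pull the locally constant scalar $(-1)^{m-1}$ out through the pullbacks $i_{\pm}^{*}$ and the multiplications by $\Pi_{\pm}$. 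The remaining bracket $i_{+}^{*}(\mathbf{e}_{\mathbf{b},-\varsigma})\Pi_{+}-i_{-}^{*}(\mathbf{e}_{\mathbf{b},-\varsigma})\Pi_{-}$ is, comparing with Definition~\ref{definitions_of_modified_e_partial_B_forms_on_bordisms}, exactly $-\mathbf{e}_{\partial}(\partial_{-}M,\partial_{+}M,\nabla^{TM})$, since in the definition of the latter the roles of $\partial_{+}M$ and $\partial_{-}M$ together with the overall sign are interchanged. Multiplying the two signs, $(-1)^{m-1}\cdot(-1)=(-1)^{m}$, yields the asserted identity.

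Since both computations are purely formal, I do not expect a genuine obstacle; the only point requiring care is the sign accounting --- keeping track of the $(-1)^{m-1}$ coming from Lemma~\ref{Lemma_dual_Bruning_Mas_formulas} alongside the $-1$ produced by interchanging $\Pi_{+}$ and $\Pi_{-}$ in Definition~\ref{definitions_of_modified_e_partial_B_forms_on_bordisms} --- together with the trivial observation that multiplication by the constant $(-1)^{m-1}$ commutes with the pullbacks $i_{\pm}^{*}$ and with multiplication by the locally constant functions $\Pi_{\pm}$.
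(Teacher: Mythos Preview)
Your proposal is correct and follows exactly the approach of the paper, which simply states that the result ``is clear from construction and Lemma~\ref{Lemma_dual_Bruning_Mas_formulas}.'' You have merely unpacked this one-line justification: the $B$-identity is Lemma~\ref{Lemma_dual_Bruning_Mas_formulas} verbatim after substituting the definitions, and for $\mathbf{e}_{\partial}$ you correctly combine the $(-1)^{m-1}$ from that lemma with the additional $-1$ arising from the swap of $\Pi_{+}$ and $\Pi_{-}$ in Definition~\ref{definitions_of_modified_e_partial_B_forms_on_bordisms}.
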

\begin{proof}
This is clear from construction and Lemma \ref{Lemma_dual_Bruning_Mas_formulas}.
\end{proof}

\subsection{Secondary characteristic forms}
Let $\{g_{s}:=g_{s}^{TM}\}_{s\in\R}$ (resp. $\{g_{s}^{\partial}:=g_{s}^{T\partial M}\}_{s\in\R}$) be a smooth family
of Riemannian metrics on $TM$ (resp. the induced family of metrics on $T\partial M$). We sketch the construction in \cite{Bruening-Ma} (see also \cite[(4.53)]{Bismut-Zhang})
for the (secondary) \textit{Chern--Simons forms}  $\widetilde{\mathbf{e}}\left(M,g_{0},g_{s}\right)$
and $\left.\widetilde{\mathbf{e}}_{\mathbf{b}}\right.\left( \partial M,g_{0},g_{s}\right)$.

Let
$\nabla_{s}:=\nabla_{g_{s}}^{TM}$ and $\mathsf{R}_{s}:={\mathsf{R}}^{TM}_{g_{s}}$ (resp.
$\nabla_{s}^{\partial}:=\nabla_{g^{\partial}_{s}}^{T\partial M}$ and $\mathsf{R}^{\partial}_{s}:=\mathsf{R}^{T\partial M}_{g^{\partial}_{s}}$)
be the Levi-Civit\`a connections and curvatures
on $TM$ (resp. on $T\partial M$) associated to the metrics $g_{s}$ (resp. $g^{\partial}_{s}$).
Consider the \textit{deformation spaces} $\widetilde{M}:=M\times\R$ (resp. $\widetilde{\partial M}:=\partial M\times \R$) with
$\pi_{\widetilde{M}}:\widetilde{M}\rightarrow \R\text{ and }\mathbf{p}_{M}:\widetilde{M}\rightarrow M,$ its canonical projections (resp.
$\pi_{\widetilde{\partial M}}:\widetilde{\partial M}\rightarrow \R\text{ and }\mathbf{p}_{\partial M}:
\widetilde{\partial M}\rightarrow \partial M$).
If $\widetilde{i}:=i\times\mathbf{id_\R}:\widetilde{\partial M}\rightarrow \widetilde{M}$ is the natural embedding
induced by $i:\partial M\rightarrow M$,
then
$\pi_{\widetilde{\partial M}}=\pi_{\widetilde{M}}\circ \widetilde{i}$.
The \textit{vertical bundle} of the fibration $\pi_{\widetilde{M}}:\widetilde{M}\rightarrow \R$ (resp. $\pi_{\widetilde{\partial M}}:\widetilde{\partial M}\rightarrow\R$)
is the pull-back of the tangent bundle $TM\rightarrow M$ along  $\mathbf{p}_{M}:\widetilde{M}\rightarrow M$
(resp. the pull-back of $T\partial M\rightarrow \partial M$ along  $\mathbf{p}_{\partial M}:\widetilde{\partial M}\rightarrow \partial M$), i.e.,
\begin{equation}
 \label{equation_mathcal_T_M}
 \mathcal{T M}:=\mathbf{p}_{M}^{*}TM\rightarrow \widetilde{M},\quad(\text{resp. }\mathcal{T\partial M}:=\mathbf{p}_{\partial M}^{*}T\partial M\rightarrow \widetilde{\partial M})
\end{equation}
and it is considered as a subbundle of $T\widetilde{M}$ (resp. $T\widetilde{\partial M}$).
The bundle $\mathcal{T M}$ (resp. $\mathcal{T\partial M}$) in (\ref{equation_mathcal_T_M}) is naturally equipped with a Riemannian metric $g^{\mathcal{T M}}$ which
coincides with $g_{s}$ (resp. $g^{\partial}_{s}$) at $M\times\{s\}$ (resp. $\partial M\times\{s\}$), for which
there exists a unique natural metric connection $\nabla^{\mathcal{T M}}$ (resp. $\nabla^{\mathcal{T \partial M}}$) whose
curvature tensor is denoted by $\mathsf{R}^{\mathcal{T M}}$ (resp. $\mathsf{R}^{\mathcal{T \partial M}}$); for more details,
see \cite[Section 1.5, (1.44) and Definition 1.1]{Bruening-Ma}, and also \cite[(4.50) and (4.51)]{Bismut-Zhang}).
Near the boundary, consider orthonormal frames of $\mathcal{TM}$ such that
$e_{m}(y, s) = \varsigma_{}$ for each $y\in\partial M$ with respect to the metric $g_{s}$. Finally,
by using the
formalism described above associated to $\mathsf{R}^{\mathcal{T M}}$ and $\mathsf{R}^{\mathcal{T \partial M}}$ to define (\ref{label_in_article_differential_forms_boundary}),
if $\mathsf{inc}_{s}: M \rightarrow  \widetilde{M}$
is the inclusion map given by $\mathsf{inc}_{s}(x)=(x,s)$ for $x_{0}\in M$ and $s\in \R$, then one defines
\begin{defi}
  \label{definition_of_Chern_Simons_secondary_classes_1}
  $$
  \begin{array}{rcll}
  \widetilde{\mathbf{e}}\left(M,g_{0},g_{\tau}\right)&:=&
  \int_{0}^{\tau}\mathsf{inc}^{*}_{s}
  \left(\iota\left(\frac{\partial}{\partial s}\right)
  \mathbf{e}(\widetilde{M},\nabla^{\mathcal{T M}})\right)ds\\
  &&\quad\quad\quad\quad\quad\quad\quad\quad\quad\in\quad\Omega^{m-1}(M,\Theta_{M})\\
 \left.\widetilde{\mathbf{e}}_{\partial}\right.\left(\partial_{+}M, \partial_{-}M,g_{0},g_{\tau}\right)&:=&
  \int_{0}^{\tau}\mathsf{inc}^{*}_{s}
  \left(\iota\left(\frac{\partial}{\partial s}\right)
  \left.\mathbf{e}_{\partial}\right.(\widetilde{\partial_{+}M}, \widetilde{\partial_{-}M},\nabla^{\mathcal{T M}})\right) ds\\
  &&\quad\quad\quad\quad\quad\quad\quad\quad\quad\in\quad\Omega^{m-2}(\partial M,\Theta_{M})\\
  \left.\widetilde{\mathbf{e}}_{\mathbf{b}}\right.\left( \partial M,g_{0},g_{\tau}\right)&:=&
  \int_{0}^{\tau}\mathsf{inc}^{*}_{s}
  \left(\iota\left(\frac{\partial}{\partial s}\right)
  \left.\mathbf{e}_{\mathbf{b}}\right.(\widetilde{\partial M}, \nabla^{\mathcal{T M}})\right) ds\\
  &&\quad\quad\quad\quad\quad\quad\quad\quad\quad\in\quad\Omega^{m-2}(\partial M,\Theta_{M})
  \end{array}
  $$
\end{defi}
where $\iota(X)$ indicates the contraction with respect to the vector field $X$.

\end{document}